\documentclass[11pt]{amsart}

\usepackage{fullpage}

\usepackage{amsthm, amsmath, amssymb, mathtools}
\swapnumbers
\usepackage[initials, non-sorted-cites]{amsrefs}
\usepackage[hidelinks]{hyperref}
\usepackage[mathscr]{eucal}
\usepackage{enumitem}
\usepackage{verbatim}
\usepackage{tikz}

\setlist[enumerate,1]{label={(\roman*)}}

\allowdisplaybreaks

\numberwithin{equation}{section}

\newtheorem{lemma}[equation]{Lemma}
\newtheorem{proposition}[equation]{Proposition}
\newtheorem{theorem}[equation]{Theorem}
\newtheorem{corollary}[equation]{Corollary}
\newtheorem{remark}[equation]{Remark}

\providecommand{\R}{\mathbb{R}}
\providecommand{\C}{\mathbb{C}}
\providecommand{\Z}{\mathbb{Z}}
\providecommand{\Q}{\mathbb{Q}}

\providecommand{\Sph}{\mathbb{S}}

\providecommand{\st}{\; : \;}

\DeclarePairedDelimiter{\abs}{\lvert}{\rvert}
\DeclarePairedDelimiter{\norm}{\lVert}{\rVert}

\providecommand{\kfam}[1]{\mathcal{S}(#1)}

\providecommand{\ks}{\Xi}

\providecommand{\arcs}[2]{\Theta_{#1}^{[#2]}}
\providecommand{\pts}[1]{\partial\Theta_{#1}}

\DeclareMathOperator{\ind}{ind}
\DeclareMathOperator{\nul}{nul}

\providecommand{\D}{\mathrm{D}}
\providecommand{\N}{\mathrm{N}}
\providecommand{\DN}{\mathrm{Z}}

\DeclareMathOperator{\supp}{supp}
\DeclareMathOperator{\closure}{closure}

\providecommand{\cc}[1]{\overline{#1}}
\renewcommand{\Re}{\operatorname{Re}}
\renewcommand{\Im}{\operatorname{Im}}

\providecommand{\isom}[1]{\mathsf{#1}}
\providecommand{\rot}{\isom{R}}
\providecommand{\refl}{{\underline{\isom{R}}}}
\providecommand{\trans}{\isom{T}}

\providecommand{\ewcover}{\varpi}

\providecommand{\stereop}{\Pi_{\Sph^2}}

\DeclareMathOperator{\dive}{div}
\DeclareMathOperator{\sech}{sech}

\providecommand{\jop}{\mathcal{L}}

\providecommand{\cyl}[1]{{#1}^\mathrm{cyl}}
\providecommand{\sph}[1]{{#1}^\mathrm{sph}}

\providecommand{\sphop}{\jop^{\Sph^2}}
\providecommand{\cylop}{\jop^{\R \times \Sph^1}}

\providecommand{\jext}{\mathcal{E}}
\providecommand{\nop}{\mathcal{N}}

\providecommand{\bop}{\mathcal{B}}

\providecommand{\mult}{\mathcal{M}}
\providecommand{\sgnmul}{\mathcal{S}}
\providecommand{\ident}{\mathcal{I}}
\providecommand{\iop}[1]{\beta_{3,#1}}
\providecommand{\parity}{\mathcal{P}}
\providecommand{\rop}{\mathcal{R}}
\providecommand{\qop}{\mathcal{Q}}

\DeclareMathOperator{\card}{cardinality}

\title{Morse index of Karcher saddle towers
       in $\mathbb{R}^2 \times \mathbb{S}^1(m)$}
\author{David Wiygul}
\address{
  Dipartimento di Matematica,
  Università di Trento,
  via Sommarive 14,
  38123 Povo di Trento,
  Italy
}
\email{davidjames.wiygul@unitn.it}

\begin{document}

\date{\today}

\begin{abstract}
For each integer $k \geq 3$ Hermann Karcher identified
a complete singly periodic minimal surface $\Xi_k$ (unique up to similarity)
with $2k$ ends asymptotic to the union of $k$ planes
intersecting equiangularly along a single line
and with genus zero in the quotient by a fundamental translation.
Writing $\Xi_{k,m}$ for the quotient of $\Xi_k$
by translation through $m \geq 1$ fundamental periods,
we study the Morse index and nullity
of the subfamilies $\Xi_{k,2}$ and $\Xi_{3,m}$.
In the $m=2$ case
we prove
for all $k \geq 3$
that $\Xi_{k,2}$
has Morse index $4k-3$ and nullity $3$.
In the $k=3$ case
we prove that there exists a real number
$\alpha^* \in (1/3,1/2)$
such that
for all $m \geq 1$
the Morse index of $\Xi_{3,m}$
is
$
  6m 
  - 4\lfloor m\alpha^* \rfloor
  - 3 
$
and its nullity is $3$
unless $m\alpha^*$ is an integer,
in which case its nullity is $7$.
Both proofs exploit the symmetries of each surface
and the Dirichlet-to-Neumann map on a half period
to reduce the problem to Fourier-analytic computations
on the unit circle (after a conformal change).
\end{abstract}

\maketitle

\section{Introduction}

In \cite{Scherk}
Scherk famously presented
the first complete embedded minimal surfaces in $\R^3$
with discrete symmetry groups
(unlike the already known plane, catenoid, and helicoid).
One of his examples
is singly periodic
and asymptotic to the union of two orthogonally intersecting planes,
of genus zero in the quotient of $\R^3$
by translation by a fundamental period.
In \cite{Karcher}
Karcher identified this example of Scherk
as just one member of a large family
$\bigcup_{k=2}^\infty \kfam{k}$,
where each $\kfam{k}$ is itself a family,
each of whose members
is a complete embedded
singly periodic minimal surface
asymptotic to the union of $2k$ half planes
and having genus zero in the quotient.
Later in \cite{PerezTraizet}
Pérez and Traizet proved that in fact
every such surface belongs to Karcher's family.
A further reason for interest in these surfaces of Karcher and Scherk
is their role as models in gluing constructions
that desingularize unions of intersecting minimal surfaces
in a variety of settings;
we refer to
\cite{KapouleasCatPlaneDesing},
\cite{TraizetWeierstrass},
and \cite{HellerHellerTraizet}
as pioneering representatives
of the technical approaches in this context.

Here we focus on the maximally symmetric subfamily
whose members' $2k$ asymptotic half planes
lie in $k$ planes, intersecting equiangularly along a line.

\begin{theorem}[\cites{Scherk, Karcher, PerezTraizet}]
\label{ks}
For each integer $k \geq 2$
there exists a unique complete embedded minimal surface
$\ks_k \subset \R^3$ such that
\begin{enumerate}
\item $\ks_k$ is invariant under the translation
      $\trans_z^{2\pi} := (x,y,z) \mapsto (x,y,z+2\pi)$,

\item the quotient $\ks_k / \langle \trans_z^{2\pi} \rangle$
      has genus zero,

\item $\ks_k$ has $2k$ ends, each asymptotic to one of the $k$ planes
      $P^{j'\pi/k} := \{y \cos \tfrac{j'\pi}{k} = x \sin \tfrac{j'\pi}{k}\}$
      with $j' \in \tfrac{1}{2} + \Z$,

\item \label{lines}
      $
        \forall j' \in \tfrac{1}{2} + \Z
        \quad
        P^{j'\pi/k} \cap \ks_k
        =
        P^{j'\pi/k}
          \cap 
          \bigcup_{n' \in \tfrac{1}{2} + \Z} \{z=n'\pi\}
      $,

\item \label{evens}
      $
        \ks_k \cap \{\abs{z} \leq \pi/2 \}
        \subset
        \bigcup_{j' \in \tfrac{1}{2} + 2\Z}
        \{
          (r \cos \theta, r \sin \theta, z)
          \st
          \tfrac{k\theta}{\pi} \in [j',j'+1], ~
          r > 0, ~
          \abs{z} \leq \tfrac{\pi}{2}
        \}
      $.
\end{enumerate}
\end{theorem}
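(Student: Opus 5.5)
Existence comes from Karcher's conjugate Plateau construction. Uniqueness comes from the Pérez--Traizet classification, with the imposed symmetries pinning down a single member of $\kfam{k}$.

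\textbf{Existence.} I would build $\ks_k$ by solving a Plateau problem for a polygonal Jenkins--Serrin-type contour and then taking the conjugate surface. Concretely, consider the infinite contour made of two horizontal rays from the $z$-axis at angles $0$ and $\pi/k$, joined by a vertical segment, with the remaining side at infinity. This is the contour over a wedge of angle $\pi/k$ with boundary values $0$ and $+\infty$ on appropriate sides. Jenkins--Serrin theory, or Karcher's explicit Weierstrass data, gives a minimal graph $G$ bounded by this contour.

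Its conjugate $G^*$ is then bounded by planar curves of symmetry. Schwarz reflection across those planes generates a complete surface, and the period of the vertical reflections is normalized to $2\pi$.

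Alternatively, and more concretely for verifying (i)--(v), I would write the Weierstrass data on the $2k$-punctured sphere. Take $g(w)=w^{k-1}$ and $dh=\dfrac{w^{k-1}\,dw}{w^{2k}+1}$ up to scaling, which is Karcher's saddle tower data with the ends at the $2k$-th roots of $-1$. I would check the following directly:
\begin{itemize}
\item Periods around the punctures are purely vertical, of length $2\pi$ after scaling, which gives (i).
\item The quotient is a sphere minus $2k$ points, which gives (ii).
\item The ends are Scherk-type, asymptotic to half planes whose directions are the images of the punctures under the Gauss map, which gives (iii).
\item The real symmetries $w\mapsto e^{i\pi/k}\bar w$ and $w\mapsto 1/\bar w$ correspond to reflections in the planes $P^{j'\pi/k}$ and to rotation by $\pi$ about horizontal lines. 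These yield the straight lines in (iv).
\item Taking the correct sign choice among the two alternating families of wedges gives (v).
\end{itemize}
Embeddedness follows from the conjugate-graph construction via Krust's theorem, since each fundamental piece is a graph over a convex domain.

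\textbf{Uniqueness.} By Pérez--Traizet, any surface satisfying (i)--(iii) belongs to $\kfam{k}$. Members of $\kfam{k}$ are parametrized by the directions of their $2k$ ends, that is, by the data of the Scherk-type ends. Condition (iii) forces these directions to be equiangular. Within Karcher's family, the end directions determine the surface up to translation, including vertical translation and, for $k$ even, possibly a rotation ambiguity. Conditions (iv) and (v) then fix the vertical placement and which alternating wedges contain the surface near $z=0$. Conditions (iv) and (v) also exclude the mirror-image surface obtained by a shift of $\pi$ in $z$ or by a rotation by $\pi/k$.

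\textbf{Main obstacle.} The main difficulty is invoking Pérez--Traizet correctly, specifically the statement that the flux or end directions determine the surface in $\kfam{k}$ uniquely up to isometry. Their theorem identifies the moduli space with the space of admissible $2k$-gons. The equiangular configuration is a single point of that space, and this is what yields uniqueness modulo rigid motions. The residual rigid motions are then eliminated by the normalizations (iii)--(v).
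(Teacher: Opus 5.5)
Your proposal follows the same route as the paper, which states this theorem with citations only. Existence is Karcher's saddle tower, whose Enneper--Weierstrass data are exactly your $g=w^{k-1}$, $dh\propto w^{k-1}\,dw/(w^{2k}+1)$ (the data underlying \ref{ewcover}). Uniqueness is the P\'erez--Traizet classification, with (iv) and (v) serving, as the paper remarks after the statement, to fix the vertical position and the checkerboard color.

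One correction to your check of (iv) concerns which symmetry each involution gives:
\begin{itemize}
\item By \ref{ewcover}(i), the involution $w\mapsto 1/\cc{w}$ is the reflection through the horizontal plane $\{z=0\}$.
\item The maps $w\mapsto e^{\pm i\pi/k}\cc{w}$ are half-turns about horizontal lines. For instance, $w\mapsto e^{-i\pi/k}\cc{w}$ fixes $\{te^{-i\pi/(2k)} \st t\in(-1,1)\}=\ewcover_k(P^{\pi/(2k)}\cap\{z=\pi/2\})$.
\item Reflections through the asymptotic planes $P^{j'\pi/k}$ are not symmetries of $\ks_k$ at all.
\end{itemize}
So it is the half-turns that put the lines on the surface.
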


Note that \ref{lines} asserts that
$\ks_k$ intersects the union of its asymptotic planes
in infinitely many straight lines,
the intersection set being precisely
the intersection of the union of the (vertical) asymptotic planes
with the union of infinitely many horizontal planes,
with uniform spacing $\pi$.
The vertical asymptotic planes and these last horizontal planes
partition $\R^3$ into wedges in slabs,
whose interiors $\ks_k$ hits or misses in a checkerboard pattern.
Item \ref{evens} is then a final normalization condition
(after the period, asymptotic planes,
and the heights of their intersection lines with the surface)
that uniquely determines the surface
by declaring which of the two checkerboard colors
covers it.

For any integers $k \geq 2$ and $m \geq 1$ we define
\begin{equation}
  \ks_{k,m}
  :=
  \ks_k / \langle \trans_z^{2m\pi} \rangle,
\end{equation}
the quotient of $\ks_k$ by translation
$\trans_z^{2m\pi} := (\trans_z^{2\pi})^m$
through $m$ fundamental translational periods.
Then $\ks_{k,m}$ is a complete minimal surface
in the quotient
$
  \R^3 / \langle \trans_z^{2m\pi} \rangle
  \cong
  \R^2 \times \Sph^1(m)
$,
of genus $\gamma_{k,m} = (k-1)(m-1)$,
and total Gaussian curvature $-4\pi(k-1)m$.
By \cite{FischerColbrieFiniteIndex}
each $\ks_{k,m}$ has finite Morse index
$\ind(\ks_{k,m})$.
Here we are interested in the question of its precise value
as well as the value of the nullity $\nul(\ks_{k,m})$
of $\ks_{k,m}$,
defined as the dimension (also known to be finite)
of the space of its bounded Jacobi fields.
Equivalently,
we seek the index and nullity of 
$\ks_k$
under $2m\pi$-periodic deformations
or, equivalently still,
the index and nullity of $m^{-1}\ks_k$ in
$
  \R^3 / \langle \trans_z^{2\pi} \rangle
  \cong
  \R^2 \times \Sph^1
$.

The interplay among
the Morse index, topology, and geometry
of minimal submanifolds
has been studied extensively in the literature
(and we refer to \cite{UrbanoBook}
for a systematic presentation of many of the main results to date).
Focusing on our setting,
we highlight the classifications
contained (as special cases of more general results)
in \cite{RitoreIndexOneInFlat}
and \cite{RosOneSided}
of complete minimal surfaces
(assuming two-sidedness and proper embeddedness
in the case of \cite{RitoreIndexOneInFlat})
in $\R^2 \times \Sph^1$
of index one and zero respectively;
\cite{RosOneSided}
also proves lower bounds on the index
which are affine in the genus.
We also mention the exact index and nullity computations
\cites{
  NayataniMorseIndexComplete, NayataniIndexGauss,
  MorabitoCHM,
  KapouleasWlindex,
  KapouleasZouIndexDblI
}
for families of embedded minimal surfaces in other settings;
these surfaces, like ours, are invariant under cyclic groups
whose orders grow with the genus (within a given family),
and the high-genus members of the families
considered in \cite{MorabitoCHM}
and \cite{KapouleasWlindex}
can be naturally regarded as desingularizations,
in the precise sense of
\cite{KapouleasSurveyRS}*{Definition 1.3}.

Most directly relevant to the questions asked here
are certain index and nullity estimates
in \cite{MontielRos}
for complete minimal surfaces of finite total curvature in $\R^3$.
Kapouleas observed
in \cite{KapouleasCatPlaneDesing}
that these results can be applied
to study the kernel of the Jacobi operator
on the $k=2$ models in his construction
(which are not limited
to the maximally symmetric member of $\kfam{2}$).
They can be invoked more generally to solve
our problem's two edge cases
(i) $m=1$ with arbitrary $k \geq 2$
and
(ii) $k = 2$ with arbitrary $m \geq 1$.

\begin{theorem}[by \cite{MontielRos}*{Corollary 15}]
\label{edges}
For any integer $k \geq 2$
\begin{equation*}
  \ind(\ks_{k,1}) = 2k-3 
  \quad \mbox{and} \quad
  \nul(\ks_{k,1}) = 3.
\end{equation*}
For any integer $m \geq 1$
  \begin{equation*}
    \ind(\ks_{2,m}) = 2m-1 = 2\gamma_{2,m} + 1
    \quad \mbox{and} \quad
    \nul(\ks_{2,m}) = 3,
  \end{equation*}
where $\gamma_{2,m} = m-1$ is the genus of $\ks_{2,m}$
(while $\ks_{k,1}$ has genus zero, independently of $k$).
\end{theorem}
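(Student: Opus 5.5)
The plan is to reduce both statements to the Montiel–Ros result, which depends only on the Gauss map viewed as a branched covering of $\Sph^2$. Recall the setting of \cite{MontielRos}. Let $\Sigma$ be a compact Riemann surface and $g\colon \Sigma \to \Sph^2$ a holomorphic map of degree $d$. Form the operator $\Delta + \abs{\nabla g}^2$, whose index and nullity are conformally invariant. Equivalently, these are the number of eigenvalues below $2$, and the multiplicity of $2$, for the Laplacian of the pulled-back (branched) spherical metric. Corollary 15 there asserts the following: if all branch values of $g$ lie on a single great circle of $\Sph^2$, then the index is $2d-1$ and the nullity is $3$, and the kernel is spanned by the three linear functions $\langle g, a\rangle$ with $a \in \R^3$.

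\textbf{Step 1: compactification.} I would first check that this applies to $\ks_{k,m}$. The surface $\ks_{k,m}$ has finite total curvature in $\R^2 \times \Sph^1(m)$, and each of its $2k$ ends is asymptotic to a flat vertical annulus. Hence the surface is conformally a compact Riemann surface $\Sigma_{k,m}$ of genus $\gamma_{k,m}$ with $2k$ punctures. Its Gauss map $\nu$ extends holomorphically across the punctures, taking at each end the horizontal unit normal of the corresponding asymptotic plane. The stability operator $\Delta + \abs{A}^2 = \Delta + \abs{\nabla \nu}^2$ on the punctured surface then has the same index as the extended operator on $\Sigma_{k,m}$. This is the Fischer-Colbrie argument \cite{FischerColbrieFiniteIndex}: the punctures have zero capacity, and compactly supported test functions approximate $H^1$ functions on $\Sigma_{k,m}$. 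Likewise, bounded Jacobi fields on $\ks_{k,m}$ extend across the punctures by removability of isolated singularities of bounded solutions, so they correspond exactly to the kernel on $\Sigma_{k,m}$. By Gauss–Bonnet, the total curvature $-4\pi(k-1)m$ gives $\deg \nu = d = (k-1)m$.

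\textbf{Step 2: locating the branch values.} This is the main step. I would use the Karcher Weierstrass data for $\ks_{k,1}$. Its quotient is $\Sph^2$ minus the $2k$ points $e^{i j'\pi/k}$, and after a rotation the Gauss map is $g(z) = z^{k-1}$ (in stereographic coordinates). Its only branch points are $0$ and $\infty$, with values the poles $0,\infty \in \cc{\C}$. These two values lie on a meridian great circle, so Corollary 15 gives index $2(k-1)-1 = 2k-3$ and nullity $3$ for $m=1$.

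For $k=2$ and general $m$, $\Sigma_{2,m}$ is the $m$-fold cyclic cover of $\Sigma_{2,1} \cong \Sph^2$ induced by the quotient map, and the Gauss map is $g \circ \pi$ with $g(z) = z$ of degree one. The covering is unbranched away from the four punctures. Riemann–Hurwitz with genus $m-1$ forces total ramification $4(m-1)$, that is, full ramification of order $m$ at each of the four ends. So the branch values of $g\circ\pi$ are exactly the four limiting normals $\pm e_x, \pm e_y$. These lie on the equator, and Corollary 15 yields index $2m-1 = 2\gamma_{2,m}+1$ and nullity $3$.

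I expect the main obstacle to be justifying the explicit form of $g$ and the ramification data, which requires some care with the Weierstrass representation of $\ks_k$. The uniqueness in Theorem \ref{ks} pins down the symmetric data. One must also check the normalization of Theorem \ref{ks}, namely which great circle contains the branch values. This choice is irrelevant to the conclusion, since any great circle suffices. Note that for $k \geq 3$ and $m \geq 2$ the branch values include both poles and the equatorial end normals, which lie on no common great circle. This explains why the method is limited to the two edge cases.
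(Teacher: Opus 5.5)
Your proposal is correct and takes the paper's route. The paper simply cites \cite{MontielRos}*{Corollary 15}, whose hypothesis that all branch values of the Gauss map lie on a single great circle holds exactly in these two edge cases; your compactification step, the Gauss map $\ewcover_k^{k-1}$ from Proposition \ref{ewcover}, and the Riemann--Hurwitz count for the $k=2$ covers supply the verification the paper leaves implicit (one harmless slip: in the paper's normalization the four end normals for $k=2$ are $(\pm e_x \pm e_y)/\sqrt{2}$, not $\pm e_x, \pm e_y$, though they still lie on the equator).
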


\begin{remark}
The case $m=1$ can also be solved more directly.
In fact, $\ks_{k,1}$ has the same Gauss map
as the Jorge--Meeks $k$-noid \cite{JorgeMeeks}
(appropriately pulled back to
$\C \cup \{\infty\}$ under their Enneper--Weierstrass parametrizations);
it follows that Nayatani's computation
in \cite{NayataniLower} of the latter's index and nullity
applies equally to the former.
\end{remark}

\begin{remark}
Actually \ref{edges} holds more generally,
with $\ks_{k,1}$
replaced by $\Xi / \langle \trans_z^{2\pi} \rangle$
for any $\Xi \in \kfam{k}$
and with $\ks_{2,m}$ replaced by
$\Sigma / \langle \trans_z^{2m\pi} \rangle$
for any $\Sigma \in \kfam{2}$.
In the latter case it is clear that
\cite{MontielRos}*{Corollary 15} applies directly.
For the former case,
first,
as observed in \cite{PerezTraizet}*{\S2.1},
it follows from \cite{CosinRos}*{Theorem 4.2}
(and certain symmetry properties of all the surfaces in $\kfam{k}$)
that every member of $\kfam{k}$
has nullity three in the quotient by a fundamental period.
The index equality then follows in turn by \cite{EjiriKotani}*{Theorem A}
or by the connectedness of $\kfam{k}$,
established in \cite{PerezTraizet}.
\end{remark}

\begin{remark}
The lower bound $\nul(\ks_{k,m}) \geq 3$
holds generally,
because the ambient translations
induce Jacobi fields on $\ks_k$
which are bounded
and $2\pi$-periodic.
\end{remark}

The proof of \cite{MontielRos}*{Corollary 15}
is based on a lower bound on the index
and an upper bound on the sum of the index and nullity
which are both sharp under the assumption
that the branch values of the Gauss map
are contained in a single great circle.
This assumption is violated for $k \geq 3$ and $m \geq 2$,
while the lower and upper bounds are still valid
but leave a gap.
In \cite{KapouleasWtordesing}
Kapouleas and I adapted the approach of Montiel and Ros
to study the kernel of the Jacobi operator
on the surfaces $\ks_{k,m}$ (for $k \geq 2$ and $m \geq 1$),
which serve as models in that construction,
there subject also
to a discrete group of rotations about the axis of periodicity.
The same analysis could be applied to obtain not just the nullity
(as needed for our application there) but also the index,
both under the same additional symmetries.
The present article carries out new computations
in the cases $m=2$ and $k=3$
without assuming extra symmetry.

\begin{theorem}[Double-period quotients]
\label{m=2}
For each integer $k \geq 3$
\begin{equation*}
  \ind(\ks_{k,2}) = 4k-3 = 4\gamma_{k,2} + 1
  \quad \mbox{and} \quad
  \nul(\ks_{k,2}) = 3,
\end{equation*}
where $\gamma_{k,2} = k-1$ is the genus of $\ks_{k,2}$.
\end{theorem}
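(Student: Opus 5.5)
The plan is to decompose the Jacobi operator on $\ks_{k,2}$ according to the deck translation $\trans_z^{2\pi}$. This translation acts on $\ks_{k,2}$ as an involution commuting with the Jacobi operator $\jop = \Delta + \abs{A}^2$. The space of $L^2$ (equivalently, bounded) functions therefore splits into functions even and odd under $\trans_z^{2\pi}$. The even part is exactly the function space of $\ks_{k,1}$, so by Theorem \ref{edges} it contributes index $2k-3$ and nullity $3$; the three bounded Jacobi fields from ambient translations are already even. It thus suffices to show that on the \emph{odd} (antiperiodic) sector, that is, functions $u$ on $\ks_k$ with $u \circ \trans_z^{2\pi} = -u$, the index is exactly $2k$ and the nullity is $0$.

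To handle the antiperiodic sector I would use the reflectional and rotational symmetries of $\ks_k$. These include reflections through the horizontal planes $\{z = n\pi\}$, Schwarz reflections through the horizontal lines in item \ref{lines} of Theorem \ref{ks}, the $k$-fold dihedral symmetry about the $z$-axis, and the reflection in vertical planes. With these I reduce to a fundamental piece: a half period $\ks_k \cap \{0 \le z \le \pi\}$, or a slice thereof. Antiperiodicity under $\trans_z^{2\pi}$ translates, via the reflection $z \mapsto -z$ combined with the parity of $u$ under it, into mixed Dirichlet/Neumann conditions on the boundary curves $\{z=0\}$ and $\{z=\pi\}$. For example, even-at-$0$ and odd-at-$\pi$ functions are antiperiodic. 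The count of negative eigenvalues of $\jop$ on the antiperiodic sector then equals the sum, over the parity classes, of the counts for the mixed boundary value problems on the half period.

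Next I conformally pull the half period back to the unit disk (or sphere), where the Gauss map is explicit from Karcher's Weierstrass data. Then $\abs{A}^2 \, dA$ becomes the pullback of the spherical metric, and $\jop$ becomes $\Delta_{\Sph^2} + 2$ composed with the Gauss map, up to a conformal factor. To compare the mixed conditions I would use the Dirichlet-to-Neumann map $\nop$ of the Jacobi operator on the half period, defined on the boundary curve, which after the conformal change is a union of arcs of the unit circle. The index of a mixed problem equals the index of the Dirichlet problem plus the number of negative eigenvalues of $\nop$ restricted to the relevant boundary class. The Dirichlet index is computed by Montiel--Ros type arguments, since the branch values of the Gauss map on a single half period do lie on a great circle. $\nop$ is then diagonalized by Fourier modes $e^{in\theta}$, with the $k$-fold rotational symmetry organizing the modes into residue classes mod $k$.

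The main obstacle is the explicit Fourier computation of $\nop$ on the circle. I must show that, in the antiperiodic parities, exactly $2k$ modes, after accounting for the Dirichlet contributions, give negative eigenvalues and none give zero. I expect this to reduce to checking the sign of an explicit rational or hypergeometric-type expression in $n$ and $k$ for each mode. Low modes ($\abs{n}\le 1$ in each residue class) will be handled by direct computation. High modes will be handled by a monotonicity or comparison estimate showing that $\nop$ eventually behaves like $\abs{n}$ and is positive. Summing, I obtain index $(2k-3)+2k = 4k-3$ and nullity $3+0=3$.
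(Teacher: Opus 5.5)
Your reduction matches the paper's as far as the circle. The $\trans_z^{2\pi}$-even sector of $\ks_{k,2}$ is $\ks_{k,1}$, with index $2k-3$ and nullity $3$. The antiperiodic sector is two congruent copies of the Zaremba problem on $\ks_k^{(0,\pi)}$ (Dirichlet on $\{z=0\}$, Neumann on $\{z=\pi\}$). For each $\Z_k$-class $j\neq 0$, after splitting off the positive-definite Dirichlet block, the Zaremba index and nullity are those of $f\mapsto\langle\nop_k f,f\rangle_{L^2(\Sph^1)}$ on traces with $\supp f\subseteq\closure(\arcs{k}{1})$, i.e.\ traces vanishing on the Dirichlet arcs.

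The gap is the next step. $\nop_k$ is a Fourier multiplier, with symbol $(\ell^2-(k-1)^2)/\abs{\ell}$. But the constrained trace space contains no nonzero trigonometric polynomial and is not invariant under the Fourier projections. So the form is not diagonal there, and there is no mode-by-mode sign check to perform. Counting the modes with $0<\abs{\ell}<k-1$ in class $j$ just reproduces the \emph{Neumann} data of Proposition~\ref{D_and_N_half_period}: two negative modes for $2\le j\le k-2$, and one negative plus one null mode for $j\in\{1,k-1\}$. The Zaremba answer needed for $4k-3$ is instead one negative direction and trivial kernel in every class, realized by functions that are not single modes. Your ``exactly $2k$ negative modes'' therefore cannot be read off the symbol. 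The Zaremba eigenvalues lie strictly between the Neumann and Dirichlet ones and have to be located by genuine estimates.

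That variational analysis is what your plan is missing. The paper proceeds in four steps.
\begin{enumerate}
\item It uses strict bracketing (Neumann $<$ Zaremba $<$ Dirichlet, eigenvalue by eigenvalue) to settle $j=0$, where the Dirichlet kernel from vertical translations makes the DtN reduction degenerate anyway. The same bracketing gives $\ind_{\DN,j}+\nul_{\DN,j}\le 1$ for $j\in\{1,k-1\}$.
\item It rescales one Neumann arc to a half circle. This turns class $j$ into the symbol $\sigma_{(a;k)}(\ell)=((\ell+a)^2-(1-\frac1k)^2)/\abs{\ell+a}$, with $a=1-\frac jk$, acting on functions supported in $\arcs{1}{1}$.
\item The lower bound $\ind\ge 1$ needs an explicit support-constrained test function, $v=\sqrt{\cos\theta}\,e^{-i\theta/2}$. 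For it, $\sum_\ell\abs{\ell+a}\abs{\widehat v(\ell)}^2=\frac14$ exactly, via a telescoping Catalan-number sum. The modes $\ell\in\{0,-1\}$ alone contribute $-\frac{(k-1)^2}{8j(k-j)}\le-\frac14$, with equality precisely at $k=3$, where strict negativity comes only from the discarded modes.
\item The upper bound $\ind+\nul\le 1$ for $2\le j\le k-2$ must show that a support-constrained function cannot exploit both negative modes $\ell=0,-1$ at once. Pairing $u$ with $\Psi$, which equals $1-e^{-i\theta}$ on $\arcs{1}{1}$ and has $\widehat\Psi(0)=\widehat\Psi(-1)=0$, expresses $\widehat u(0)-\widehat u(-1)$ through the positive modes. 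Cauchy--Schwarz plus an explicit bound on $\sum_{\ell\neq 0,-1}\abs{\widehat\Psi(\ell)}^2/\sigma_{(a;k)}(\ell)$ then shows the positive part always wins on a putative two-dimensional nonpositive subspace.
\end{enumerate}
Neither estimate follows from the sign pattern of the symbol, and together they are the real content of the proof.
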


The proof of \ref{m=2} is the subject of Section \ref{Zaremba},
making use of preliminary material presented
in Section \ref{fundamentals}.
The main task is the computation of the index and nullity
on a fundamental half period of $\ks_k$
bounded by two planes of symmetry
and subject to mixed, or Zaremba, boundary data,
Dirichlet on the intersection with one plane
and Neumann on the intersection with the other.
One key step in the solution
is the reduction of this last problem
to the computation of the index and nullity
of the Dirichlet-to-Neumann map
for the Jacobi operator on the half period,
restricted to functions respecting the Zaremba condition.
By taking advantage of the explicit
Enneper--Weierstrass data defining $\ks_k$,
this problem can in turn be transferred to the unit circle
and settled by estimates involving Fourier series
chosen to provide sharp lower and upper bounds
on the index and nullity.

\begin{theorem}[Quotients with six ends]
\label{k=3}
There exists $\alpha^* \in (\tfrac{1}{3}, \tfrac{1}{2})$
such that for every integer $m \geq 1$
\begin{equation*}
  \ind(\ks_{3,m})
  =
  6m  - 4\lfloor m\alpha^* \rfloor - 3
  \quad \mbox{and} \quad
  \nul(\ks_{3,m})
  =
  \begin{cases}
    3 &\mbox{if } m\alpha^* \notin \Z
    \\
    7 &\mbox{if } m\alpha^* \in \Z.
  \end{cases}
\end{equation*}
\end{theorem}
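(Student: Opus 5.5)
We will decompose the Jacobi operator on $\ks_{3,m}$ under the cyclic symmetry $\trans_z^{2\pi}$, whose quotient group $\Z/m\Z$ acts on $\ks_{3,m}$. Each bounded Jacobi field and each negative eigenfunction splits into components on which $\trans_z^{2\pi}$ acts by $e^{2\pi i \ell/m}$, $\ell \in \Z/m\Z$. This gives
\begin{equation*}
  \ind(\ks_{3,m}) = \sum_{\ell=0}^{m-1} \ind_{\ell/m},
  \qquad
  \nul(\ks_{3,m}) = \sum_{\ell=0}^{m-1} \nul_{\ell/m},
\end{equation*}
where $\ind_\alpha$ and $\nul_\alpha$ denote the index and nullity of the Jacobi operator on one fundamental period of $\ks_3$ subject to the Floquet (twisted-periodic) condition $u \circ \trans_z^{2\pi} = e^{2\pi i\alpha}u$, with $\alpha \in [0,1)$. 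We will use the reflection symmetry $z \mapsto -z$ (through the horizontal plane of symmetry) to pair $\alpha$ with $1-\alpha$. The whole problem then reduces to computing the functions $\alpha \mapsto \ind_\alpha,\nul_\alpha$ on $[0,1/2]$. The claimed formula corresponds to the following behaviour:
\begin{itemize}
\item $\ind_0 = 3$ and $\nul_0 = 3$, consistent with \ref{edges} for $m=1$;
\item $\ind_\alpha = 6$ and $\nul_\alpha = 0$ for $0 < \alpha < \alpha^*$;
\item $\ind_\alpha = 4$ and $\nul_\alpha = 0$ for $\alpha^* < \alpha \le 1/2$, consistent with \ref{m=2} ($4\cdot3-3 = 3+6$, so $\ind_{1/2}$ contributes $6$ via the pair or $\ind_{1/2}$ directly).
\end{itemize}
We will calibrate these values exactly against \ref{edges} and \ref{m=2}. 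The values must also be such that at $\alpha = \alpha^*$ and $1-\alpha^*$ two Jacobi fields each appear, giving the extra $4$ in the nullity.

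Counting then gives the theorem. The number of $\ell$ with $\ell/m$ or $1-\ell/m$ below $\alpha^*$ is $2\lfloor m\alpha^*\rfloor$, adjusted at integer values of $m\alpha^*$. This yields $6m - 4\lfloor m\alpha^*\rfloor - 3$, and the nullity is $3+4=7$ exactly when $m\alpha^* \in \Z$.

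To compute $\ind_\alpha$ and $\nul_\alpha$, we follow the strategy used for \ref{m=2}.
\begin{enumerate}
\item Use the rotational symmetry of order $3$ about the $z$-axis, together with the vertical reflection planes, to split further into isotypic pieces.
\item Restrict each piece to a half period bounded by horizontal symmetry planes. There the Floquet condition becomes a boundary condition interpolating between Dirichlet and Neumann, namely a twisted Dirichlet-to-Neumann coupling between the two boundary curves.
\item Express the count as the index and nullity of the Dirichlet-to-Neumann map of the Jacobi operator on the half period, acting on boundary data twisted by $e^{i\pi\alpha}$, plus the Dirichlet index of the half period, which the Section \ref{fundamentals} material should supply.
\item Use the Enneper--Weierstrass data of $\ks_3$ and a conformal change to transplant the boundary to the unit circle. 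There the Jacobi operator becomes $\Delta + |\nabla\nu|^2$ pulled back from $\Sph^2$, and the Dirichlet-to-Neumann map is computed by Fourier series.
\end{enumerate}

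Monotonicity in $\alpha$ should follow from the analytic dependence of the twisted Dirichlet-to-Neumann quadratic form on $\alpha$. We will show that exactly one relevant eigenvalue branch, of multiplicity two (from the pair of conjugate or rotated modes), crosses zero, and that it does so transversally at a single $\alpha^*$. We then locate $\alpha^*$ in $(1/3,1/2)$ by evaluating the Fourier-side quadratic form at $\alpha = 1/3$ and $\alpha = 1/2$, which amounts to sharp test-function bounds.

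The main obstacle will be proving that the crossing is unique and transversal, rather than only bounding the index at the two endpoints. We will first try to show that the relevant eigenvalue of the reduced problem is strictly monotone in $\alpha$, via a Hadamard-type variation formula: differentiating the twisted boundary condition yields a sign-definite boundary term. Failing that, we will use explicit Fourier upper and lower bounds on $[0,1/2]$ that pin down a single sign change.
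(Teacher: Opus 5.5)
Your plan has a genuine gap, and the profile of $\ind_\alpha$ you predict is backwards. The correct values are $\ind_\alpha(\ks_3)=4$ for $0<\alpha\le\alpha^*$ and $\ind_\alpha(\ks_3)=6$ for $\alpha^*<\alpha\le\tfrac12$. Indeed, \ref{m=2} with \ref{k=3_decomp} forces $\ind_{1/2}(\ks_3)=9-3=6$, which contradicts your assertion $\ind_\alpha=4$ on $(\alpha^*,\tfrac12]$. With your assignment the count gives $4m-1+4\lfloor m\alpha^*\rfloor$, not $6m-3-4\lfloor m\alpha^*\rfloor$.

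The reversal also misdirects your endpoint estimates. To get $\alpha^*>\tfrac13$ you must show the relevant index is \emph{small} at $\alpha=\tfrac13$. That is a coercivity estimate (the paper's Cauchy--Schwarz bound with explicit constant $S<1$ in \ref{threshold_upper_bound}), which no test function can supply. The bound $\alpha^*<\tfrac12$ is deduced from \ref{m=2}.

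You also skip the bracketing step \ref{k=3_bracket}. Dirichlet/Neumann comparison on a full period (the slit sphere) uses the half-period and Zaremba results of Sections~\ref{fundamentals} and~\ref{Zaremba} and excludes the periodic translation Jacobi fields. It gives $\ind_{\alpha,0}=2$, $\nul_{\alpha,0}=0$, and $1\le\ind_{\alpha,j}\le\ind_{\alpha,j}+\nul_{\alpha,j}\le2$ for $j=1,2$. This is what reduces everything to the sign of a single eigenvalue.

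The central gap is uniqueness of the crossing. A generic Floquet phase is not preserved by the horizontal reflections, which send $\alpha$ to $-\alpha$, so your half-period step is not available as stated. On a full period, class $j=2$ leads to the form $\iop{\alpha}(u,u)=\langle\nop_3u,u\rangle+\langle\nop_3\mult_\alpha u,\mult_\alpha u\rangle$. Its index is symmetric under $\alpha\mapsto1-\alpha$ and goes $1\to2\to1$ across $(0,1)$, so analytic dependence gives no monotonicity. The derivative $\partial_\alpha\iop{\alpha}(u,u)=2\pi\Im\langle\nop_3w,\sgnmul w\rangle$, with $w=\mult_\alpha u$, is indefinite because it changes sign under $w\mapsto\parity w$. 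So there is no sign-definite Hadamard term, and finitely many Fourier bounds cannot certify a single sign change on a continuum.

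The missing idea is the screw symmetry $\rot_z^{\pi/3}\trans_z^\pi$. It induces an involution $\rop_\alpha=\mult_{1-\alpha}\parity$ preserving $\iop{\alpha}$. Combined with the frequency-parity splitting $1+6\Z$ versus $4+6\Z$ (exchanged by $\sgnmul$), each eigenspace of $\rop_\alpha$ is identified with $H^{1/2}_{\varsigma,(+,2;3)}(\Sph^1)$. On that space $\iop{\alpha}$ becomes $2\qop_s$ with $s=\tan\frac{\alpha\pi}{2}$ or $s=\cot\frac{\alpha\pi}{2}$ (\ref{iop-qop_equiv}).

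Here $\qop_s(u,u)=\langle\nop_3^+u,u\rangle+s^2\langle\nop_3^-\sgnmul u,\sgnmul u\rangle$, and the second term is positive for $u\neq0$ because $\varsigma e^{-2i\theta}\notin H^{1/2}$. Hence the least eigenvalue is strictly increasing in $s$ (\ref{single_crossing}). A one-mode argument gives $\ind(\qop_s)+\nul(\qop_s)\le1$. The $\tan$ and $\cot$ branches then produce exactly the two crossings at $\alpha^*$ and $1-\alpha^*$. Without this structure or a substitute for it, your proposal does not establish a single threshold.
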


Section \ref{quasiperiodic} takes up the proof of \ref{k=3},
also relying on some definitions and basic results
from Section \ref{fundamentals},
and making important but localized use of some results
of Section \ref{Zaremba}.
The main task is the computation of the index and nullity
on a fundamental period of $\ks_3$
but subject to quasiperiodic boundary conditions
(prescribing a relative phase rather than equality
over a fundamental period);
we do not assume any background
in the theory of periodic elliptic operators,
but we mention \cite{KuchmentOverview}
as a general reference. 
Equivalently
(under a conformal transformation)
we study the index and nullity
of a Schrödinger operator
(a conformal rescaling of the Jacobi operator)
on the unit sphere slit along three equatorial arcs
with a phase shift
corresponding to the quasiperiodic boundary condition
imposed across each arc.
This problem too can be reduced
to the computation of the index and nullity
of the Dirichlet-to-Neumann map
for the Jacobi operator on a fundamental half period,
but now restricted to functions
satisfying conditions
induced by the original quasiperiodic data.
Again, this last problem is formulated
on the unit circle
and resolved by explicit estimates with Fourier series.

The reductions,
for both the $k=3$ and $m=2$ cases,
to the Dirichlet-to-Neumann map
and then to calculations with Fourier series
are the most distinctive aspects of this approach,
in comparison to 
\cites{NayataniMorseIndexComplete, NayataniIndexGauss, MorabitoCHM},
\cite{KapouleasWlindex}, and \cite{KapouleasZouIndexDblI}.
We rely heavily on the explicit Enneper--Weierstrass representation
of $\ks_k$,
in contrast to the computation \cite{KapouleasWlindex}
with Kapouleas (where no such explicit data is available).
Like \cite{KapouleasZouIndexDblI} but unlike \cite{KapouleasWlindex},
we work with functions taking complex
(rather than strictly real) values.

Underlying \ref{k=3} is
the threshold $\alpha^*$:
after the reductions just described,
the dependence on the phase
(corresponding to the quasiperiodic boundary condition)
is carried by a one-parameter family
of operators on the circle,
whose least eigenvalue is continuous and strictly monotonic
in the parameter
and so changes sign just once;
the location of this crossing determines $\alpha^*$.
For each given $m$ only finitely many phases are realized,
and the step function $\lfloor m \alpha^* \rfloor$ in \ref{k=3}
counts those lying below the threshold.
This collapse onto a single threshold
is what the restriction to $k=3$ affords
(just as \ref{m=2} restricts instead to $m=2$
but accommodates every $k$):
for larger $k$
the decomposition by rotational symmetry
produces several inequivalent families of operators,
each in general carrying its own threshold
and requiring its own eigenvalue analysis,
and we do not pursue this here.

\begin{corollary}
For every $m \in \{1,2,3,4,6\}$
\begin{equation*}
  \ind(\ks_{3,m})
  =
  \begin{cases}
    4m-1 = 2\gamma_{3,m} + 3 &\mbox{for } m \in \{1,3\}
    \\
    4m+1 = 2\gamma_{3,m} + 5 &\mbox{for } m \in \{2,4,6\}
  \end{cases}
\quad \mbox{and} \quad
\nul(\ks_{3,m}) = 3,
\end{equation*}
where $\gamma_{3,m} = 2m-2$ is the genus of $\ks_{3,m}$.
\end{corollary}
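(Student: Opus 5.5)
This corollary should follow from \ref{k=3} (with \ref{edges} and \ref{m=2} as consistency checks) once $\lfloor m\alpha^*\rfloor$ is computed for the listed $m$. The only input is $\alpha^*\in(\tfrac13,\tfrac12)$.

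\emph{Step 1: compute $\lfloor m\alpha^*\rfloor$ and the nullity.} Since $m\alpha^*\in(\tfrac m3,\tfrac m2)$, an open interval:
\begin{itemize}
\item $m=1$: $m\alpha^* \in (\tfrac13,\tfrac12)$, so $\lfloor m\alpha^*\rfloor=0$.
\item $m=2$: $m\alpha^* \in (\tfrac23,1)$, so $\lfloor m\alpha^*\rfloor=0$.
\item $m=3$: $m\alpha^* \in (1,\tfrac32)$, so $\lfloor m\alpha^*\rfloor=1$.
\item $m=4$: $m\alpha^* \in (\tfrac43,2)$, so $\lfloor m\alpha^*\rfloor=1$.
\item $m=6$: $m\alpha^* \in (2,3)$, so $\lfloor m\alpha^*\rfloor=2$.
\end{itemize}
None of these intervals contains an integer, so $m\alpha^*\notin\Z$ in every case. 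By \ref{k=3} the nullity is therefore $3$.

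\emph{Step 2: substitute into the index formula.} From $\ind(\ks_{3,m})=6m-4\lfloor m\alpha^*\rfloor-3$ we get:
\begin{itemize}
\item $m=1$: $3 = 4m-1$.
\item $m=3$: $18-4-3=11 = 4m-1$.
\item $m=2$: $9 = 4m+1$.
\item $m=4$: $24-4-3=17 = 4m+1$.
\item $m=6$: $36-8-3=25 = 4m+1$.
\end{itemize}

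\emph{Step 3: rewrite in terms of the genus.} With $\gamma_{3,m}=(k-1)(m-1)=2m-2$, we have $2\gamma_{3,m}+3=4m-1$ and $2\gamma_{3,m}+5=4m+1$. This gives the stated forms.

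As cross-checks, $m=1$ agrees with \ref{edges} ($2k-3=3$), and $m=2$ agrees with \ref{m=2} ($4k-3=9$).

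There is no real obstacle here. The one point needing care is that the open interval $(\tfrac m3,\tfrac m2)$ contains no integer for each listed $m$. This is exactly why $m=5$ is excluded: there $m\alpha^*\in(\tfrac53,\tfrac52)$ straddles $2$, and the answer would depend on the unknown exact value of $\alpha^*$.
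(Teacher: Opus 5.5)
Your proposal is correct and follows the paper's proof: both use only $\tfrac{1}{3} < \alpha^* < \tfrac{1}{2}$ to evaluate $\lfloor m\alpha^* \rfloor$ and to rule out $m\alpha^* \in \Z$ for each listed $m$, and then substitute into the formula of Theorem \ref{k=3}. Your arithmetic and the rewriting in terms of $\gamma_{3,m} = 2m-2$ are correct, and so is your explanation of why $m=5$ is excluded.
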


\begin{proof}
The bounds
$\tfrac{1}{3} < \alpha^* < \tfrac{1}{2}$
imply $m\alpha^* \not\in \Z$
for every $m \in \{1,2,3,4,6\}$
and
$\lfloor \alpha^* \rfloor = 0$,
$\lfloor 2\alpha^* \rfloor = 0$,
$\lfloor 3\alpha^* \rfloor = 1$,
$\lfloor 4\alpha^* \rfloor = 1$,
($\lfloor 5\alpha^* \rfloor \in \{1,2\}$),
and $\lfloor 6\alpha^* \rfloor = 2$.
\end{proof}

\begin{remark}
By \ref{k=3} the index of $\ks_{3,m}$
increases by either six or two as $m$ advances by one
(and the genus correspondingly by two),
depending on whether $\lfloor m\alpha^* \rfloor$
stays constant or jumps;
only if $\alpha^*$ is rational,
which we do not know,
is this increment pattern eventually periodic.

Qualitatively similar behavior---with index
apparently increasing by either four or two
as the genus advances by one---was observed
in the results
\cite{CarlottoSchulzWpairs}*{Table 3}
(briefly discussed in the comments following
\cite{CarlottoSchulzWpairs}*{Conjecture 7.7})
of simulations of some numerical free-boundary minimal surfaces,
all having three boundary components, in the unit ball.
These surfaces belong to a conjectured family
extending one rigorously exhibited
in \cite{CarlottoSchulzWpairs}
by a gluing construction
in the style of \cite{KapouleasCatPlaneDesing},
but using as its desingularizing model
a certain $\Xi \neq \Xi_3 \in \kfam{3}$.

In contrast,
for each of the families treated
in \cites{NayataniMorseIndexComplete, NayataniIndexGauss, MorabitoCHM},
\cite{KapouleasWlindex}, and \cite{KapouleasZouIndexDblI}
the index is shown to be an affine function of the genus.
\end{remark}

\begin{remark}
\label{alpha^*_numerics}
The proof of \ref{k=3} confines $\alpha^*$
to the interval $(\tfrac{1}{3}, \tfrac{1}{2})$,
but in principle the framework
supports its computation to arbitrary precision:
by the monotonicity described above,
any narrowing of this interval
amounts to the determination
of the sign of a least eigenvalue
at certain explicitly identifiable parameter values,
and each such sign
(away from the crossing itself)
is accessible to estimates
of the sort establishing
\ref{threshold_lower_bound} and \ref{threshold_upper_bound}.

Heuristic numerical computation yields
\begin{equation*}
  \alpha^* = 0.4323618(1),
\end{equation*}
the parenthetical digit indicating
the uncertainty in the last place.
For each $m \leq 2298$
no fraction with denominator $m$
falls within the stated uncertainty,
so, with the above estimate,
$\lfloor m \alpha^* \rfloor$ is determined
and $m\alpha^* \notin \Z$,
whence \ref{k=3} identifies
the index and nullity of $\ks_{3,m}$
for every such $m$.

For illustration we include the table below
(nullity being $3$ throughout).
\begin{equation*}
\begin{array}{r|cccccccccccc}
m & 1 & 2 & 3 & 4 & 5 & 6 & 7 & 8 & 9 & 10 & 11 & 12
\\ \hline
\ind(\ks_{3,m}) & 3 & 9 & 11 & 17 & 19 & 25 & 27 & 33 & 39 & 41 & 47 & 49
\\[1ex]
m & 13 & 14 & 15 & 16 & 17 & 18 & 19 & 20 & 21 & 22 & 23 & 24
\\ \hline
\ind(\ks_{3,m}) & 55 & 57 & 63 & 69 & 71 & 77 & 79 & 85 & 87 & 93 & 99 & 101
\end{array}
\end{equation*}
\end{remark}

\subsection*{Acknowledgments}
This project has received funding
from the European Research Council (ERC)
under the European
Union’s Horizon 2020 research and innovation programme
(grant agreement No. 947923).
The questions addressed here
were motivated by prior work
\cite{KapouleasWtordesing}
and
\cite{KapouleasWlindex}
with Nicos Kapouleas;
my interest in the $k=3$ case was heightened
by the project \cite{CarlottoSchulzWpairs}
and in index estimates more generally
by \cites{CarlottoSchulzWgrowth,CarlottoSchulzWspectral,CarlottoSchulzWstackings},
all joint with Alessandro Carlotto and Mario Schulz.

The results were obtained in collaboration
with Claude Fable 5 and Claude Opus 4.8
(from Anthropic).
Each party's involvement was load-bearing,
but the responsibility for any errors is mine alone.
Many elements of the strategy and solution
were in place before I consulted Fable 5 and Opus 4.8,
including the reduction
to the Zaremba problem for \ref{m=2},
to the slit sphere with quasiperiodic data for \ref{k=3},
and to Fourier analysis on the unit circle
via the Dirichlet-to-Neumann map in both cases.
However, I did not have a proof of either \ref{m=2} or \ref{k=3}.
Claude provided not only critical analytic computations
(following many exploratory numerical ones)
but also several key ideas
that I had no hand in generating
(at least in their original forms),
including \ref{single-arc_reduction}---simplifying the proof of
\ref{m=2}---and \ref{iop-qop_equiv}
together with \ref{single_crossing}---essential to the proof of \ref{k=3}.
Virtually all of the writing was done by me,
but there are a few isolated and limited portions of text
which Claude suggested
and I only lightly edited.
Figure \ref{fig:levels} was composed by Claude.

\section{Basic definitions and spectral analysis on a fundamental half period}
\label{fundamentals}

In this section we present the basic geometry
of the $\ks_k$ surfaces,
introduce the Dirichlet-to-Neumann map
for the Jacobi operator on a fundamental half period,
and record for future use the index and nullity there,
subject to either the Dirichlet or Neumann boundary condition,
in each Fourier class
defined under the action of a $\Z_k$ symmetry group.

\subsection*{Conventions}
We take
\begin{equation}
\begin{gathered}
  \Sph^1 := \R / 2\pi\Z,
  \\
  \Sph^2 := \{(x,y,z) \in \R^3 \st x^2+y^2+z^2 = 1\},
  \\
  \Sph^2_+  := \Sph^2 \cap \{z > 0\},
    \quad
    \Sph^2_- := \Sph^2 \cap \{z < 0\}.
\end{gathered}
\end{equation}
When we want to identify
the unit circle in $\C$ with $\Sph^1$
we will explicitly (and pedantically)
do so by applying
\begin{equation}
  \Im \log : \C \setminus \{0\} \to \Sph^1,
\end{equation}
which we will precompose
with stereographic projection
\begin{equation}
\begin{gathered}
  \stereop : \Sph^2 \to \C \cup \{\infty\}
  \\
  (x,y,z) \mapsto \frac{x+iy}{1-z}
\end{gathered}
\end{equation}
from the north pole 
when we instead want to identify the equator
of $\Sph^2$ with $\Sph^1$.

All our function spaces are vector spaces over $\C$,
but we otherwise use standard notation
(e.g. $C^\infty(\Omega) := C^\infty(\Omega;\C)$);
correspondingly the dimension
$\dim X := \dim_{\C} X$
of a subspace $X$ of a function space
always refers to the dimension over $\C$.
All our inner products are conjugate-linear in the left argument.

For a vector space $Y$ and $X \subseteq Y$
we write $X \leq Y$ to mean that
$X$ is a linear subspace of $Y$.

A Riemannian metric $g$ on a manifold induces
Laplacian $\Delta_g := \dive_g \nabla_g$, the divergence of the gradient.
We fix signs so that an eigenfunction $u$
with eigenvalue $\lambda \in \C$
of a Schrödinger operator $\jop = \Delta_g + V$
satisfies $\jop u = \lambda u$.
(By these conventions the eigenvalues of $-\Delta_g$
on a closed manifold are nonnegative.)

The Morse index and nullity of $\ks_{k,m}$ are
\begin{equation}
\begin{gathered}
  \begin{aligned}
  \ind(\ks_{k,m})
  :=
  \sup
  \{
    &\dim X
    \st
    X \leq C_c^\infty(\ks_{k,m})
    \mbox{ and }
  \\
    &\forall u \in X \setminus \{0\}
    \quad
    \norm{\nabla_{g_k} u}_{L^2(g_k)}^2
    <
    \norm{\abs{A_k}_{g_k} u}_{L^2(g_k)}^2
  \},
  \end{aligned}
  \\
  \nul(\ks_{k,m})
  :=
  \dim
  \{
    u \in C^\infty(\ks_{k,m}) \cap L^\infty(\ks_{k,m})
    \st
    \Delta_{g_k} u + \abs{A_k}_{g_k}^2 u = 0
  \},
\end{gathered}
\end{equation}
where $g_k$ and $A_k$ are the first and second fundamental forms of $\ks_k$.
By \cite{FischerColbrieFiniteIndex}
the index is equally the sum of the dimensions
of the $L^2(\ks_{k,m},g_k)$ eigenspaces
of $-(\Delta_{g_k} + \abs{A_k}_{g_k}^2)$
with strictly negative eigenvalue.

Given functions $u: \Sph^1 \to \C$
and $f: \R \to \C$,
we define their Fourier transforms
$\widehat{u} : \Z \to \C$
and $\widehat{f} : \R \to \C$
by
\begin{equation}
  \widehat{u}(\ell)
    :=
    \frac{1}{2\pi} \int_{\Sph^1} u(\theta)\, e^{-i\ell\theta}\, d\theta
  , \quad
  \widehat{f}(\xi)
    :=
    \frac{1}{2\pi} \int_{\R} f(x)\, e^{-i\xi x}\, dx.
\end{equation}

The Fourier transform extends to distributions $u$ on $\Sph^1$,
and for each $s \in \R$ we write $H^s(\Sph^1)$
for the space of those $u$ with
$
  \norm{u}_{H^s(\Sph^1)}^2
  :=
  \sum_{\ell \in \Z}(1+\ell^2)^s\abs{\widehat{u}(\ell)}^2
  <
  \infty
$,
and we define
$H^{-\infty}(\Sph^1) := \bigcup_{s \in \R} H^s(\Sph^1)$.

Given $\Omega \subseteq \Sph^2$,
we write $H^1(\Omega)$
for the space of $u \in L^2(\Omega)$
with weak gradient $\nabla u \in L^2(\Omega)$
(with respect to the round metric on $\Sph^2$).

\subsection*{Symmetries}
Writing $\trans_z^t$ for translation by $(0,0,t)$,
$\rot_z^t$ for rotation about the $z$-axis through angle $t$,
and $\refl_P$ for reflection through plane $P$,
we observe
(from the uniqueness in Theorem \ref{ks}
or inspection of the Enneper--Weierstrass data
\cite{Karcher}*{2.3.1})
\begin{equation}
  \forall \mathsf{S}
    \in
    \{
      \refl_{\{z=0\}}, ~
      \refl_{\{y=0\}}, ~
      \rot_z^{\pi/k}\trans_z^\pi
    \}
  \quad
  \mathsf{S}\ks_k = \ks_k.
\end{equation}
(In fact these three isometries generate the full symmetry group of $\ks_k$.)
Note in particular that
$\rot_z^{2\pi/k}$ is also a symmetry
and every $\{z=n\pi\}$ with $n \in \Z$
is a plane of symmetry.

\subsection*{Enneper--Weierstrass covering map}
The Enneper--Weierstrass data \cite{Karcher}*{2.3.1}
for $\ks_k$ defines a parametrization
of a fundamental translational period
by the Riemann sphere $\C \cup \{\infty\}$,
yielding the covering map
in the following proposition;
the data can be used to confirm its assertions.

For the statement we define 
$\arcs{k}{0}, \arcs{k}{1} \subset \Sph^1$
by
\begin{equation}
\begin{aligned}
  \arcs{k}{n}
  &:=
  \{\theta \in \Sph^1 \st (-1)^n \cos k\theta < 0\}
  \\
  &=
  \bigcup_{j \in \Z}
    \left(
      \tfrac{(4j+1-2n)\pi}{2k}, \, \tfrac{(4j+3-2n)\pi}{2k}
    \right)
    \big/ 2\pi\Z,
\end{aligned}
\end{equation}
each of whose images $e^{i\arcs{k}{n}}$
is a union of $k$ pairwise congruent and disjoint arcs
in the unit circle in $\C$,
with $\{e^{i\arcs{k}{0}},e^{i\arcs{k}{1}}\}$
a partition of
$\{w \in \C \st \abs{w} = 1 \mbox{ and } w^{2k} \neq -1\}$.
We denote their common boundary by
\begin{equation}
\begin{aligned}
  \pts{k} &:= \partial\arcs{k}{0} = \partial\arcs{k}{1}
  =
  \{j'\pi/k \st j' \in \tfrac{1}{2} + \Z\} / 2\pi\Z
  \\
  &=
  \Im \log \{w^{2k} = -1\}.
\end{aligned}
\end{equation}

We also define, for any interval $I \subset \R$, the subset
\begin{equation}
  \ks_k^I
  :=
  \ks_k \cap \{z \in I\};
\end{equation}
in particular
\begin{equation}
  \ks_k^{(0,\pi)}
  =
  \ks_k \cap \{0 < z < \pi\}
\end{equation}
is the interior of a fundamental half period
and $\ks_k^{[0,\pi]}$ is its closure.

\begin{proposition}[\cite{Karcher}]
\label{ewcover}
For each integer $k \geq 3$
there exists a unique smooth conformal covering map
\begin{equation*}
\ewcover_k:
\ks_k
\to
\{w \in \C \cup \{\infty\} \st w^{2k} \neq -1\}
\end{equation*}
such that
\begin{enumerate}
\item
  $
  \ewcover_k \circ \trans_z^{2\pi}
    =
    \ewcover_k,
  \quad
  \ewcover_k \circ \refl_{\{z=0\}}
    =
    \cc{\ewcover_k}^{-1},
  \quad
  \ewcover_k \circ \refl_{\{y=0\}}
    =
    \cc{\ewcover_k}$,
  \\
  $
  \ewcover_k \circ \rot_z^{\pi/k}\trans_z^\pi
    =
    e^{-i\pi/k}\cc{\ewcover_k}^{-1},
  \quad
  \ewcover_k \circ \rot_z^{2\pi/k}
  =
  e^{-2\pi i/ k}\ewcover_k
  $,

\item $\ewcover_k|_{\ks_k^{[0,\pi]}}$
      is a bijection onto
      $\{\abs{w} \leq 1 \st w^{2k} \neq -1\}$,

\item
  $
    \forall n \in \Z
    \quad
    \ewcover_k(\{z=n\pi\})
    =
    e^{i\arcs{k}{n \bmod 2}}
  $,

\item
  $
    \ewcover_k(P^{\pi/(2k)} \cap \{z=\tfrac{\pi}{2}\})
    =
    \{te^{-i\pi/(2k)} \st t \in (-1,1)\}
  $,

\item \label{ewcover-normal}
one choice of continuous unit normal $\nu_k: \ks_k \to \Sph^2$ satisfies
  $
  \stereop \circ \nu_k = \ewcover_k^{k-1}
  $,
  and

\item the first and second fundamental forms
$g_k$ and $A_k$
of $\ks_k$
satisfy
\begin{equation*}   
g_k
  =
  \rho_k^2\ewcover_k^*\abs{dw}^2,
\qquad
\abs{A_k}_{g_k}
  =
  2\sqrt{2} \, \frac{k-1}{\rho_k}
    \, \frac{\abs{\ewcover_k}^{k-2}}{\abs{\ewcover_k}^{2k-2}+1},
\end{equation*}
where the conformal factor $\rho_k: \ks_k \to \R$ is given by
\begin{equation*}
\rho_k
  :=
  k\frac{\abs{\ewcover_k}^{2k-2}+1}{\abs{\ewcover_k^{2k}+1}}.
\end{equation*}
\end{enumerate}
\end{proposition}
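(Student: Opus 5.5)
Since the statement is attributed to Karcher and its assertions are to be checked from the Enneper--Weierstrass data, I would prove it by writing that data down explicitly and verifying each item.

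\emph{Weierstrass data and the covering.} On $\C \cup \{\infty\}$ minus $\{w^{2k}=-1\}$ take Gauss map $G(w) = w^{k-1}$ and height differential $dh = c\,\frac{w^{k-1}}{w^{2k}+1}\,dw$, with the constant $c$ chosen so that the residues of $dh$ at the $2k$ punctures have real part zero and imaginary part $\pm 1$. The map
\begin{equation*}
  X(w) = \Re \int^w \Bigl( \tfrac12\bigl(G^{-1} - G\bigr), \tfrac{i}{2}\bigl(G^{-1}+G\bigr), 1 \Bigr)\, dh
\end{equation*}
is then well defined modulo vertical translations by multiples of $2\pi$. This is because the horizontal periods vanish by the $\Z_{2k}$ symmetry $w \mapsto e^{i\pi/k}w$ combined with the residue computation, while the vertical periods are $2\pi$ times residue sums. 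Let $\Sigma$ be the image, which is a complete minimal surface. I would check properties (i)--(v) of \ref{ks} for $\Sigma$ and then invoke the uniqueness in \ref{ks}, which identifies $\Sigma = \ks_k$. The inverse of $X$, lifted to $\ks_k$, is $\ewcover_k$; it is conformal and a covering because $X$ descends to an embedding modulo $\trans_z^{2\pi}$.

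\emph{Symmetries, items (i)--(iv).} The Weierstrass data are invariant, up to the stated sign and conjugation changes, under $w \mapsto \cc w$, $w \mapsto \cc w^{-1}$ and $w \mapsto e^{-2\pi i/k}w$. For instance, $\cc w^{-1}$ sends $G \mapsto \cc G^{-1}$ and $dh \mapsto -\cc{dh}$, which is the data of the reflection through $\{z=0\}$. This gives the equivariance relations in (i), with basepoint $w=0$ chosen to map into $\{z\in\pi\Z\}$ appropriately. The fixed set of $w\mapsto\cc w^{-1}$ is the unit circle, so $|w|=1$ maps into horizontal symmetry planes. The height $\Re\int dh$ is locally constant on each arc of $e^{i\arcs{k}{n}}$ and jumps by $\pi$ across each puncture, since a half-residue is crossed there. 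This gives (iii) and shows that the disk $|w|\le1$ parametrizes a half period, which is (ii). The segment in (iv) is the fixed set of the antiholomorphic symmetry $w\mapsto e^{-i\pi/k}\cc w$, which corresponds to rotation by $\pi$ about a horizontal line, so it maps to a straight line. A height computation places that line at $z=\pi/2$.

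\emph{Metric, curvature and normal, items (v)--(vi).} Since $G$ is stereographic projection of the normal, (v) holds for the appropriate choice of orientation. The standard formulas give
\begin{equation*}
  ds = \tfrac12\bigl(|G|^{-1}+|G|\bigr)\,|dh|
  \quad\text{and}\quad
  |A|^2 = -2K = 2\left(\frac{4|G'|}{|dh/dw|\,(1+|G|^2)^2}\cdot\frac{|G|}{1}\right)^2 ,
\end{equation*}
written in the usual normalized form. Substituting $G$ and $dh$ with $c=2k$, consistent with the period $2\pi$, yields
\begin{equation*}
  \rho_k = k\,\frac{|w|^{2k-2}+1}{|w^{2k}+1|}
  \quad\text{and}\quad
  |A_k| = 2\sqrt2\,(k-1)\,|w|^{k-2}\big/\bigl(\rho_k(|w|^{2k-2}+1)\bigr),
\end{equation*}
which is (vi). The uniqueness of $\ewcover_k$ follows because any other such map differs by a conformal automorphism of the punctured sphere that commutes with the prescribed symmetries and satisfies (v); the Gauss map then forces it to be the identity.

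\emph{Main obstacle.} The main obstacle is embeddedness and the checkerboard normalization (v) of \ref{ks}, which are needed to invoke uniqueness. I would try to avoid proving embeddedness directly by using the conjugate Plateau construction that Karcher uses. Alternatively, I would appeal to the Pérez--Traizet classification \cite{PerezTraizet}, which applies once the genus-zero quotient and the asymptotic planes have been verified. Of all the steps, the residue and constant normalization is the most error-prone routine computation.
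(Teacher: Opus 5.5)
Your route is the paper's own. The paper offers nothing beyond taking Karcher's data as the Enneper--Weierstrass data of $\ks_k$ and asserting that they confirm (i)--(vi). Your data $G=w^{k-1}$, $dh=2k\,w^{k-1}\,dw/(w^{2k}+1)$ are right, and so are the computations you sketch: at a puncture $w_0$ the residues of $\phi_1,\phi_2$ are the real numbers $-\Re w_0$ and $\Im w_0$, so the horizontal periods vanish with no symmetry argument; the residue of $dh$ is $w_0^{-k}=\mp i$; $\Re\,dh=0$ on $\abs{w}=1$; and $\rho_k$ and $\abs{A_k}_{g_k}$ come out exactly as stated.

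Two steps are wrong as written. First, in the uniqueness argument the Gauss map does not force $\psi:=\ewcover'\circ\ewcover_k^{-1}$ to be the identity. Condition (v) only yields $\psi(w)=\omega w$ with $\omega^{k-1}=1$, or an antiholomorphic $\psi(w)=\zeta/\cc{w}$ if $\ewcover'$ uses the other normal. Commuting with $w\mapsto\cc{w}$ still allows $\omega=-1$ when $k$ is odd. Concretely, for odd $k$ (so for $k=3$) the map $-\ewcover_k$ satisfies (i), (ii), (iv), (v) and (vi). It is excluded only by (iii), since $\cos k(\theta+\pi)=-\cos k\theta$ gives $-e^{i\arcs{k}{0}}=e^{i\arcs{k}{1}}$. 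The antiholomorphic candidates exchange $\abs{w}<1$ with $\abs{w}>1$ and are excluded by (ii). Second, the P\'erez--Traizet fallback does not get around embeddedness, because their classification assumes proper embeddedness. The identification of the surface defined by the data with $\ks_k$ has to come from Karcher's embedded construction, your first option, which is what the paper relies on in citing him.

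There are also two smaller slips.
\begin{itemize}
\item The screw relation $\ewcover_k\circ\rot_z^{\pi/k}\trans_z^{\pi}=e^{-i\pi/k}\cc{\ewcover_k}^{-1}$ in (i) is not generated by your maps $w\mapsto\cc{w}$, $\cc{w}^{-1}$, $e^{-2\pi i/k}w$, even together with $\trans_z^{2\pi}$. Their horizontal parts form a dihedral group not containing $\rot_z^{\pi/k}$. You also need $w\mapsto e^{-i\pi/k}w$, which sends $G\mapsto -e^{i\pi/k}G$ and $dh\mapsto -dh$ and so corresponds to $\rot_z^{\pi/k}\refl_{\{z=\pi/2\}}$. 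Composing it with $w\mapsto\cc{w}^{-1}$ gives $\rot_z^{\pi/k}\refl_{\{z=\pi/2\}}\refl_{\{z=0\}}=\rot_z^{\pi/k}\trans_z^{\pi}$.
\item The base point should be normalized by $X(0)=(0,0,\pi/2)$ rather than sent into $\{z\in\pi\Z\}$. Since $\int_0^1 dh=\pi/2$, this is the choice that puts $e^{i\arcs{k}{0}}$ at height $0$ and $e^{i\arcs{k}{1}}$ at height $\pi$, as (ii)--(iv) require. It is also what your own height computation in (iv) presupposes.
\end{itemize}
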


\begin{figure}
\centering
\begin{tikzpicture}[scale=0.55, baseline=(current bounding box.center)]
\draw[thick] plot[smooth] coordinates {(3.528,-2.165) (3.296,-2.067) (3.065,-1.980) (2.836,-1.904) (2.608,-1.842) (2.382,-1.797) (2.159,-1.770) (1.940,-1.763) (1.726,-1.780) (1.521,-1.820) (1.325,-1.885) (1.141,-1.975) (0.970,-2.090) (0.816,-2.227) (0.678,-2.385) (0.557,-2.561) (0.453,-2.754) (0.365,-2.961) (0.291,-3.180) (0.231,-3.408) (0.182,-3.645) (0.142,-3.888) (0.111,-4.138)};
\draw[thick,densely dashed] plot[smooth] coordinates {(3.639,1.973) (3.438,1.821) (3.247,1.665) (3.067,1.504) (2.899,1.337) (2.747,1.164) (2.612,0.985) (2.497,0.798) (2.404,0.605) (2.336,0.407) (2.295,0.204) (2.281,-0.000) (2.295,-0.204) (2.336,-0.407) (2.404,-0.605) (2.497,-0.798) (2.612,-0.985) (2.747,-1.164) (2.899,-1.337) (3.067,-1.504) (3.247,-1.665) (3.438,-1.821) (3.639,-1.973)};
\draw[thick] plot[smooth] coordinates {(0.111,4.138) (0.142,3.888) (0.182,3.645) (0.231,3.408) (0.291,3.180) (0.365,2.961) (0.453,2.754) (0.557,2.561) (0.678,2.385) (0.816,2.227) (0.970,2.090) (1.141,1.975) (1.325,1.885) (1.521,1.820) (1.726,1.780) (1.940,1.763) (2.159,1.770) (2.382,1.797) (2.608,1.842) (2.836,1.904) (3.065,1.980) (3.296,2.067) (3.528,2.165)};
\draw[thick,densely dashed] plot[smooth] coordinates {(-3.528,2.165) (-3.296,2.067) (-3.065,1.980) (-2.836,1.904) (-2.608,1.842) (-2.382,1.797) (-2.159,1.770) (-1.940,1.763) (-1.726,1.780) (-1.521,1.820) (-1.325,1.885) (-1.141,1.975) (-0.970,2.090) (-0.816,2.227) (-0.678,2.385) (-0.557,2.561) (-0.453,2.754) (-0.365,2.961) (-0.291,3.180) (-0.231,3.408) (-0.182,3.645) (-0.142,3.888) (-0.111,4.138)};
\draw[thick] plot[smooth] coordinates {(-3.639,-1.973) (-3.438,-1.821) (-3.247,-1.665) (-3.067,-1.504) (-2.899,-1.337) (-2.747,-1.164) (-2.612,-0.985) (-2.497,-0.798) (-2.404,-0.605) (-2.336,-0.407) (-2.295,-0.204) (-2.281,0.000) (-2.295,0.204) (-2.336,0.407) (-2.404,0.605) (-2.497,0.798) (-2.612,0.985) (-2.747,1.164) (-2.899,1.337) (-3.067,1.504) (-3.247,1.665) (-3.438,1.821) (-3.639,1.973)};
\draw[thick,densely dashed] plot[smooth] coordinates {(-0.111,-4.138) (-0.142,-3.888) (-0.182,-3.645) (-0.231,-3.408) (-0.291,-3.180) (-0.365,-2.961) (-0.453,-2.754) (-0.557,-2.561) (-0.678,-2.385) (-0.816,-2.227) (-0.970,-2.090) (-1.141,-1.975) (-1.325,-1.885) (-1.521,-1.820) (-1.726,-1.780) (-1.940,-1.763) (-2.159,-1.770) (-2.382,-1.797) (-2.608,-1.842) (-2.836,-1.904) (-3.065,-1.980) (-3.296,-2.067) (-3.528,-2.165)};
\draw[thin,densely dotted] (-3.724,-2.150) -- (3.724,2.150);
\draw[thin,densely dotted] (-0.000,-4.300) -- (0.000,4.300);
\draw[thin,densely dotted] (3.724,-2.150) -- (-3.724,2.150);
\node at (1.85,3) {\small $z=0$};
\node at (-3.436,0.152) {\small $z=0$};
\node at (1.586,-3) {\small $z=0$};
\node at (3.35,0.152) {\small $z=\pi$};
\node at (-1.75,3) {\small $z=\pi$};
\node at (-1.75,-3) {\small $z=\pi$};
\node at (0,0) {\small $z=\pi/2$};
\end{tikzpicture}
\hspace{0.5cm}
\begin{tikzpicture}[baseline=(current bounding box.center)]
\draw[->] (0,-0.02) to[out=25,in=155] (1.45,-0.02);
\node at (0.72,0.4) {$\ewcover_3$};
\end{tikzpicture}
\hspace{0.5cm}
\begin{tikzpicture}[scale=2.10, baseline=(current bounding box.center)]
\draw[thick] (30:1) arc (30:90:1);
\draw[thick,densely dashed] (90:1) arc (90:150:1);
\draw[thick] (150:1) arc (150:210:1);
\draw[thick,densely dashed] (210:1) arc (210:270:1);
\draw[thick] (270:1) arc (270:330:1);
\draw[thick,densely dashed] (330:1) arc (330:390:1);
\draw[thin,densely dotted] (0,0) -- (30:1);
\draw[thin,densely dotted] (0,0) -- (90:1);
\draw[thin,densely dotted] (0,0) -- (150:1);
\draw[thin,densely dotted] (0,0) -- (210:1);
\draw[thin,densely dotted] (0,0) -- (270:1);
\draw[thin,densely dotted] (0,0) -- (330:1);
\draw[fill=white] (30:1) circle (0.55pt);
\draw[fill=white] (90:1) circle (0.55pt);
\draw[fill=white] (150:1) circle (0.55pt);
\draw[fill=white] (210:1) circle (0.55pt);
\draw[fill=white] (270:1) circle (0.55pt);
\draw[fill=white] (330:1) circle (0.55pt);
\node at (60:1.22) {\small $e^{i\arcs{3}{0}}$};
\node at (180:1.22) {\small $e^{i\arcs{3}{0}}$};
\node at (300:1.22) {\small $e^{i\arcs{3}{0}}$};
\node at (2:1.24) {\small $e^{i\arcs{3}{1}}$};
\node at (120:1.24) {\small $e^{i\arcs{3}{1}}$};
\node at (240:1.24) {\small $e^{i\arcs{3}{1}}$};
\node[anchor=210] at (30:1.05) {\scriptsize $e^{i\pi/6}$};
\node[anchor=270] at (90:1.05) {\scriptsize $e^{i\pi/2}$};
\node[anchor=330] at (150:1.05) {\scriptsize $e^{i5\pi/6}$};
\node[anchor=30] at (210:1.05) {\scriptsize $e^{i7\pi/6}$};
\node[anchor=90] at (270:1.05) {\scriptsize $e^{i3\pi/2}$};
\node[anchor=150] at (330:1.05) {\scriptsize $e^{i11\pi/6}$};
\end{tikzpicture}
\caption{Level sets of $\ks_3$
and their images under $\ewcover_3$,
at heights $z=0$ (solid),
$z=\pi/2$ (dotted), and $z=\pi$ (dashed).}
\label{fig:levels}
\end{figure}

\subsection*{Jacobi operator}
We denote the Jacobi operator of $\ks_k$
by
\begin{equation}
\begin{aligned}
  \jop_k
  &:=
    \Delta_{g_k} + \abs{A_k}_{g_k}^2
    =
    \dive_{g_k} \nabla_{g_k} + \abs{A_k}_{g_k}^2
  \\
  &=
  \Delta_{g_k}
    +
    \frac{8(k-1)^2}{\rho_k^2}
    \,
    \frac{\abs{\ewcover_k}^{2k-4}}
         {\left(\abs{\ewcover_k}^{2k-2}+1\right)^2}.
\end{aligned}
\end{equation}
Since $\jop_k$ commutes with the (pullback) action of $\trans_z^{2\pi}$,
we innocuously write $\jop_k$ also for the Jacobi operator
of any $\ks_{k,m}$
(and likewise $g_k$ for its metric).

We will need the following basic result.

\begin{lemma}
\label{jacobi_finiteness}
Let $k \geq 3$ and $m \geq 1$ be integers
and
$u \in C^\infty(\ks_{k,m})$ a solution to
$\jop_k u = 0$.
Then $u$ is bounded if and only if
it has finite Dirichlet energy
$\norm{du}_{L^2(g_k)}^2$.
\end{lemma}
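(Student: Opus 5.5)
The plan is to pass to the conformal compactification given by Proposition \ref{ewcover}. The lemma then becomes a removable-singularity statement for a Schrödinger equation with bounded potential on a punctured disk. Two features make this work. First, the Dirichlet energy $\norm{du}_{L^2(g_k)}^2$ is conformally invariant in dimension two. Second, the equation $\jop_k u = 0$ is equivalent to $\Delta_{\abs{dw}^2} u + \rho_k^2 \abs{A_k}_{g_k}^2 u = 0$ in the coordinate $w$. By Proposition \ref{ewcover},
\begin{equation*}
  \rho_k^2\abs{A_k}_{g_k}^2
  =
  \frac{8(k-1)^2\abs{w}^{2k-4}}{(\abs{w}^{2k-2}+1)^2},
\end{equation*}
which is smooth and bounded on all of $\C$. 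This reflects the fact that the Gauss map $w^{k-1}$ extends across the punctures $w^{2k}=-1$.

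\textbf{Step 1: localize to the ends.} The surface $\ks_{k,m}$ is a finite cover, of degree $m$, of $\{w \in \C\cup\{\infty\} \st w^{2k}\neq -1\}$ via $\ewcover_k$. This is because $\ewcover_k$ is $\trans_z^{2\pi}$-invariant and parametrizes one translational period. Hence $\ks_{k,m}$ is a compact set $K$ together with finitely many ends, each lying over a punctured disk $D_\epsilon(w_0)\setminus\{w_0\}$ with $w_0^{2k}=-1$. On $K$, boundedness and finite energy both hold automatically for smooth $u$, so it suffices to prove the equivalence on each end $E$.

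\textbf{Step 2: uniformize an end.} Geometrically, an end of $\ks_k$ is asymptotic to a vertical half-plane, and the quotient by $\trans_z^{2m\pi}$ turns it into an annular end. Since the monodromy of $\ks_k \to \{w^{2k}\neq-1\}$ around $w_0$ is the period translation $\trans_z^{2\pi}$, the annulus $E$ covers the punctured disk cyclically with degree $m$. We therefore get a conformal coordinate $\zeta$ on $E\cup\{p\}$, a punctured disk, with $\zeta^m = w - w_0$. In this coordinate the equation reads $\Delta_{\abs{d\zeta}^2}u + \tilde V u = 0$ with
\begin{equation*}
  \tilde V = m^2\abs{\zeta}^{2m-2}\,\rho_k^2\abs{A_k}_{g_k}^2 ,
\end{equation*}
which is bounded and smooth on the full disk $D$.

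\textbf{Step 3: finite energy implies bounded.} Suppose $u\in C^\infty(D\setminus\{0\})$ has $\nabla u\in L^2(D)$. Then $u\in H^1(D)$, since a finite-energy function on the punctured disk lies in $L^2$. The point $0$ has zero $H^1$-capacity, so the equation holds weakly across $0$: given a test function $\varphi$, cut it off near $0$ with the standard logarithmic cutoffs $\eta_\delta$ whose energy tends to zero. Elliptic regularity then makes $u$ smooth, and in particular bounded, near $0$.

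\textbf{Step 4: bounded implies finite energy.} This is the main obstacle, though it is still standard. Assume $\abs{u}\leq M$ near $0$. Interior gradient estimates on annuli $\{r/2<\abs{\zeta}<2r\}$, rescaled to unit size, give $\abs{\nabla u}\leq CM/r$. Test the equation against $\eta_\delta^2 u$, where $\eta_\delta$ is a logarithmic cutoff vanishing near $0$ with $\int\abs{\nabla\eta_\delta}^2\to0$. This yields
\begin{equation*}
  \int\eta_\delta^2\abs{\nabla u}^2
  \leq
  C\int\abs{\tilde V}u^2
  + C M^2\int\abs{\nabla\eta_\delta}^2 + C,
\end{equation*}
using Cauchy--Schwarz to absorb the cross term $2\int\eta_\delta u\,\nabla\eta_\delta\cdot\nabla u$. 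Letting $\delta\to0$ shows $\nabla u\in L^2(D)$, which is finite energy. The same argument then shows $u$ is a weak solution across $0$, hence smooth there, although smoothness is not needed for the lemma.

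Since Dirichlet energy and boundedness are both unchanged under the conformal change of coordinates, combining Steps 1–4 over the finitely many ends proves the lemma.
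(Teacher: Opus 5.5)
Your proposal is correct and takes essentially the same route as the paper. You localize to a single end, uniformize it by a coordinate $\zeta$ with $\ewcover_k = w_0 + \zeta^m$, note that the rescaled potential $m^2\abs{\zeta}^{2m-2}\rho_k^2\abs{A_k}_{g_k}^2$ extends smoothly across the puncture, and conclude by removability; your Steps 3 and 4 just spell out the standard capacity and Caccioppoli arguments the paper compresses into one line (the gradient bound $\abs{\nabla u} \leq CM/r$ in Step 4 is superfluous, since the cutoff estimate alone closes).
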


\begin{proof}
It suffices to prove the equivalence on a neighborhood of a single end
of $\ks_{k,m}$, say the inverse image $E$ of a small disc centered
at $w_1 := e^{i\frac{\pi}{2k}}$
under the covering induced by $\ewcover_k$.
We choose on $E$ a holomorphic coordinate $\zeta$
such that $\ewcover_k = w_1 + \zeta^m$ on $E$,
under which $E$ is conformally parametrized by a punctured disc,
$
  \norm{du}_{L^2(E, ~ g_k)}
  =
  \norm{du}_{L^2(E, ~ \abs{d\zeta}^2)}
$,
and
the equation $\jop_k u = 0$
is equivalent
(after multiplying both sides by the corresponding conformal factor)
to
\begin{equation}
  \left(
    \partial_\zeta \partial_{\cc{\zeta}}
    +2(k-1)^2 
    \frac{m^2 \abs{\zeta}^{2m-2}\abs{\ewcover_k}^{2k-4}}
         {\left(\abs{\ewcover_k}^{2k-2}+1\right)^2}
    \right)u
  =
  0,
\end{equation}
whose potential is checked to extend smoothly through the puncture
(with value there $0$ for $m \geq 2$ and $(k-1)^2/2$ for $m=1$).
Consequently, the singularity of $u$ at $\zeta=0$ is removable
under the assumption of either property,
and standard interior elliptic regularity then implies the other.
\end{proof}

\subsection*{Spherical picture}
We postcompose with the inverse of stereographic projection
to obtain the covering
\begin{equation}
  \stereop^{-1} \circ \ewcover_k :
    \ks_k \to
    \{
      p \in \Sph^2
      \st
      \stereop(p)^{2k} \neq -1
    \},
\end{equation}
which induces on $\ks_k$ (and $\ks_{k,m}$) the spherical conformal metric
\begin{equation}
  \sph{g}_k := (\stereop^{-1} \circ \ewcover_k)^*g^{\Sph^2}
  =
  \frac{4}{\rho_k^2\left(1+\abs{\ewcover_k}^2\right)^2} \, g_k
\end{equation}
($g^{\Sph^2}$ denoting the standard round metric on $\Sph^2$),
and we define the conformally rescaled Jacobi operator
\begin{equation}
\begin{aligned}
  \sph{\jop}_k
  &:=
  \frac{\rho_k^2\left(1+\abs{\ewcover_k}^2\right)^2}{4} \, \jop_k
  \\
  &=
  \Delta_{\sph{g}_k}
    +
    2(k-1)^2 \,
    \frac{\left(1+\abs{\ewcover_k}^2\right)^2 \abs{\ewcover_k}^{2k-4}}
         {\left(\abs{\ewcover_k}^{2k-2}+1\right)^2},
\end{aligned}
\end{equation}
whose potential term we observe takes values in $[0,2(k-1)^2]$.
Observe also,
from \ref{ewcover}
and the equivariance of stereographic projection,
the intertwining relations
\begin{equation}
\label{sph_symmetries}
\begin{gathered}
  (\stereop^{-1} \circ \ewcover_k) \circ \trans_z^{2\pi}
  =
  \stereop^{-1} \circ \ewcover_k,
  \\
  (\stereop^{-1} \circ \ewcover_k) \circ \refl_{\{z=0\}}
  =
  \refl_{\{z=0\}} \circ (\stereop^{-1} \circ \ewcover_k),
  \\
  (\stereop^{-1} \circ \ewcover_k) \circ \refl_{\{y=0\}}
  =
  \refl_{\{y=0\}} \circ (\stereop^{-1} \circ \ewcover_k),
  \\
  (\stereop^{-1} \circ \ewcover_k) \circ \rot_z^{\pi/k}\trans_z^{\pi}
  =
  \rot_z^{-\pi/k}\refl_{\{z=0\}} \circ (\stereop^{-1} \circ \ewcover_k),
  \\
  (\stereop^{-1} \circ \ewcover_k) \circ \rot_z^{2\pi/k}
  =
  \rot_z^{-2\pi/k} \circ (\stereop^{-1} \circ \ewcover_k),
\end{gathered}
\end{equation}
in which the isometries on the right-hand sides
are understood to act on $\Sph^2 \subset \R^3$.
Each of the isometries of $\Sph^2$ so appearing
preserves $\sphop_k$,
its potential depending only on $\abs{\stereop}$
through a function invariant under
$\abs{\stereop} \mapsto 1/\abs{\stereop}$.

Note that,
according to \ref{ewcover}\ref{ewcover-normal},
this covering of $\Sph^2$ is a
lift of the Gauss map covering of $\Sph^2$
via the branched covering $\Sph^2 \to \Sph^2$
given by $w \mapsto w^{k-1}$.
Note also that
\begin{equation}
  (\stereop^{-1} \circ \ewcover_k)(\ks_k^{[0,\pi]})
  =
  \{
    (x,y,z) \in \Sph^2
    \st
    z \leq 0
  \}
  \setminus
  \{
    (\cos \phi, \sin \phi, 0)
    \st
    \phi \in \pts{k}
  \},
\end{equation}
bijectively.
In particular
$\stereop^{-1} \circ \ewcover_k$
restricts to an isometry
of $(\ks_k^{(0,\pi)}, \sph{g}_k)$
onto the open southern hemisphere.

Last we define on $\Sph^2$ the Schrödinger operator
(with smooth potential)
\begin{equation}
  \sphop_k
  :=
  \Delta_{g^{\Sph^2}}
    +
    V_k^{\Sph^2},
  \qquad
  V_k^{\Sph^2}
  :=
  2(k-1)^2 \,
  \frac{\left(1+\abs{\stereop}^2\right)^2 \abs{\stereop}^{2k-4}}
       {\left(\abs{\stereop}^{2k-2}+1\right)^2},
\end{equation}
so that
\begin{equation}
\label{sphop_intertwine}
  \sph{\jop}_k
    \circ
    (\stereop^{-1} \circ \ewcover_k)^*
  =
  (\stereop^{-1} \circ \ewcover_k)^*
    \circ
    \sphop_k.
\end{equation}

The spherical picture compactifies the half period to a hemisphere,
simplifying the spectral theory.

\subsection*{Cylindrical picture}

We next define the punctured surfaces
\begin{equation}
  \ks_k^\times
    :=
    \ks_k \setminus \{x=y=0\}
    =
    \ks_k \setminus \{(0,0, n'\pi)\}_{n' \in \tfrac{1}{2} + \Z}
\end{equation}
and compose $\ewcover_k$ with the logarithm function
$(\Re \log, ~ \Im \log) : \C \setminus \{0\} \to \R \times \Sph^1$
(recalling $\Sph^1 = \R / 2\pi\Z$)
to obtain the conformal covering
\begin{equation}
  (t_k,\theta_k)
  :=
  (\Re \log \ewcover_k, ~ \Im \log \ewcover_k)
  :
  \ks_k^\times 
  \to
  (\R \times \Sph^1)
    \setminus
    (\{0\} \times \pts{k}).
\end{equation}
We will use $(t_k,\theta_k)$, henceforth written as $(t,\theta)$
with the $k$ understood from context,
as local coordinates ($\Sph^1$-valued in the case of $\theta$)
on $\ks_k^\times$.

Observe that
\begin{equation}
\begin{gathered}
  (t,\theta) \circ \trans_z^{2\pi} = (t,\theta),
  \quad
  (t,\theta) \circ \refl_{\{z=0\}} = (-t,\theta),
  \\
  (t,\theta) \circ \refl_{\{y=0\}} = (t,-\theta),
  \quad
  (t,\theta) \circ \rot_z^{2\pi/k} = (t, \theta - 2\pi/k)
\end{gathered}
\end{equation}
and that
\begin{equation}
  (t,\theta)(\ks_k^{[0,\pi]} \setminus \{(0,0,\tfrac{\pi}{2})\})
  =
  ((-\infty,0] \times \Sph^1) \setminus (\{0\} \times \pts{k}),
\end{equation}
bijectively.

Next we write
\begin{equation}
    \cyl{g}_k
    :=
    \abs{\ewcover_k}^{-2}\rho_k^{-2}g_k
    =
    dt^2 + d\theta^2
\end{equation}
for the cylindrical pullback metric.
Note that
\begin{equation}
  \forall n \in \Z
  \quad
  \cyl{g}_k|_{\{z = n\pi\} \cap \ks_k}
  =
  \sph{g}_k|_{\{z = n\pi\} \cap \ks_k}
\end{equation}
and that
$\partial_t$
is the outward unit conormal
to $\ks_k^{[0,\pi]}$ under the $\cyl{g}_k$ (and $\sph{g}_k$) metric.

Finally we define on $\ks_k^\times$ the conformally rescaled Jacobi operator
\begin{equation}
\begin{aligned}
  \cyl{\jop}_k
  &:=
    \abs{\ewcover_k}^2\rho_k^2 \jop_k
  \\  
  &=
    \Delta_{\cyl{g}_k}
      +8(k-1)^2
        \frac{\abs{\ewcover_k}^{2k-2}}
        {(\abs{\ewcover_k}^{2k-2}+1)^2}
  \\
  &=
    \partial_t^2 + \partial_\theta^2
      + 2(k-1)^2 \sech^2 (k-1)t,
\end{aligned}
\end{equation}
whose final line we take as the definition of
the Schrödinger operator
$\cylop_k$
defined on the entire $\R \times \Sph^1$.
Note that
\begin{equation}
\label{sphop-cylop}
  \sphop_k \circ (\Re \log \stereop, ~ \Im \log \stereop)^*
  =
  \frac{\left(1+\abs{\stereop}^2\right)^2}{4\abs{\stereop}^2}
  \,
  (\Re \log \stereop, ~ \Im \log \stereop)^* \circ \cylop_k,
\end{equation}
the conformal factor taking the constant value $1$ on the equator.

The cylindrical picture
simplifies the form
of the (conformally rescaled) Jacobi operator.

\subsection*{Spectral analysis on the half cylinder}

\begin{proposition}
\label{ode}
Let $k \in \Z \cap [3,\infty)$ and $\mu \in [0,\infty)$.
The space of bounded rotationally invariant solutions to
\begin{equation*}
  \cylop_k u(t) = \mu^2 u(t)
  \quad \mbox{on } (-\infty,0] \times \Sph^1
\end{equation*}
is spanned by
\begin{equation*}
  u_{k,\mu}(t)
  :=
  \left(
    \mu
    -(k-1) \tanh (k-1)t
  \right)
  e^{\mu t}.
\end{equation*}
\end{proposition}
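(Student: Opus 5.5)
Rotationally invariant solutions reduce the problem to the ODE
\begin{equation*}
  u'' + \bigl(2(k-1)^2 \sech^2 (k-1)t - \mu^2\bigr) u = 0
  \quad \mbox{on } (-\infty,0],
\end{equation*}
which is a Pöschl--Teller equation with reflectionless potential. The plan is to show that $u_{k,\mu}$ solves it, produce a second independent solution, and then check which combinations stay bounded as $t \to -\infty$.

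\emph{Step 1: $u_{k,\mu}$ is a solution.} Set $a := k-1$ and $s := \tanh at$, so that $s' = a(1-s^2)$ and $\sech^2 at = 1-s^2$. Write $u = (\mu - a s)e^{\mu t}$. Then
\begin{equation*}
  u' = \bigl(-a^2(1-s^2) + \mu(\mu - a s)\bigr)e^{\mu t}.
\end{equation*}
Differentiating once more gives
\begin{equation*}
  u'' = \bigl(2a^3 s(1-s^2) - \mu a^2(1-s^2) - \mu a^2(1-s^2) + \mu^2(\mu - a s)\bigr)e^{\mu t}.
\end{equation*}
Adding $2a^2(1-s^2)u$ produces
\begin{equation*}
  2a^2(1-s^2)(\mu - a s)e^{\mu t} = \bigl(2a^2\mu(1-s^2) - 2a^3 s(1-s^2)\bigr)e^{\mu t},
\end{equation*}
which cancels the first three terms of $u''$ exactly. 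What remains is $\mu^2 u$, as required. Equivalently, one can note that $u_{k,\mu} = (\partial_t - a\tanh at)e^{\mu t}$ and that this Darboux operator intertwines $\partial_t^2$ with $\cylop_k$ restricted to rotationally invariant functions. I may present the computation in that factorized form.

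\emph{Step 2: a second solution.} For $\mu > 0$ the same factorization gives the second solution
\begin{equation*}
  v = (\partial_t - a\tanh at)e^{-\mu t} = (-\mu - a s)e^{-\mu t}.
\end{equation*}
Since $s \to -1$ as $t \to -\infty$, we have $v \sim (a - \mu)e^{-\mu t}$. This is unbounded unless $\mu = a$, and in that case a more careful expansion is needed. To avoid the special case I will argue through the Wronskian instead. Because the equation has no first-order term, the Wronskian of two solutions is constant. Suppose $u_{k,\mu}$ and $v$ are both bounded. Every solution is smooth with bounded coefficients, so a bounded solution also has bounded derivative. Moreover $u_{k,\mu}$ and $u_{k,\mu}'$ decay like $e^{\mu t}$ as $t \to -\infty$ when $\mu > 0$, since the prefactor tends to $\mu + a \neq 0$. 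Hence $W(u_{k,\mu}, v) \to 0$ at $-\infty$, so $W = 0$ and $v$ is a multiple of $u_{k,\mu}$. This shows the bounded solution space is at most one-dimensional, and $u_{k,\mu}$ (bounded, with $\mu + a \neq 0$) spans it.

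\emph{Step 3: $\mu = 0$.} Here $u_{k,0} = -a\tanh at$ is bounded. A second solution is obtained by reduction of order, $u_{k,0}\int u_{k,0}^{-2}$, and grows linearly like $t$ as $t \to -\infty$, so it is unbounded. Concretely, one can take $t\tanh at - 1/a$; I will check that it solves the equation. Hence the bounded solutions are again spanned by $u_{k,0}$.

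The main obstacle is only the bookkeeping in Step 2, namely making the boundedness argument uniform in $\mu$ without separate exceptional cases. The Wronskian argument handles every $\mu > 0$ uniformly.
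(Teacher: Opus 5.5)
Your proof is correct, and it differs from the paper's mainly in how you exclude a second bounded solution when $\mu>0$. The paper writes down an explicit unbounded second solution in every case. For $\mu\notin\{0,k-1\}$ it is $u_{k,-\mu}$, which is your Darboux $v$. At the two values where $u_{k,-\mu}$ fails, it uses separately found solutions: $(k-1)t\tanh(k-1)t-1$ at $\mu=0$, and $\sinh(k-1)t+(k-1)t\sech(k-1)t$ at $\mu=k-1$. The paper then identifies these unbounded solutions geometrically, as Jacobi fields generated by dilation and by rotations.

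Your Wronskian argument avoids the second solution entirely for all $\mu>0$ at once. It needs only three facts: $u_{k,\mu}$ and $u_{k,\mu}'$ are $O(e^{\mu t})$; any bounded solution has bounded derivative (true because its second derivative is then bounded as well); and the Wronskian is constant. This disposes of the exceptional value $\mu=k-1$ with no extra computation. The cost is that you learn nothing about the unbounded solutions, which the paper wants for that geometric identification. For $\mu=0$ your reduction-of-order solution $t\tanh(k-1)t-1/(k-1)$ is the paper's $u_{k,0'}$ up to a constant factor.

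One point of phrasing in Step 2: the $v$ in the Wronskian argument must be read as an arbitrary bounded solution, not the Darboux $v$ you defined. At $\mu=k-1$ that $v$ equals $-(k-1)\sech(k-1)t=-u_{k,k-1}$, so it is not independent of $u_{k,\mu}$ at all. Read this way, your argument is complete.
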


\begin{proof}
The term $2(k-1)^2 \sech^2 (k-1)t$ in $\cylop_k$
is one of the well-known exactly solvable potentials
in the quantum mechanics literature,
whose bound states have been studied
since at least \cite{RosenMorse};
one solution method is briefly presented
at the beginning of the proof of
\cite{KapouleasWtordesing}*{Lemma 6.29}.
Here it suffices to observe
that the full solution space to the ODE is two-dimensional,
and it is readily confirmed that $u_{k,\mu}$
is indeed a bounded solution.
The function obtained by replacing $\mu$ by $-\mu$
in the definition of $u_{k,\mu}$
is also a solution,
which is, however, unbounded (and linearly independent),
except when $\mu \in \{0, k-1\}$,
in which case instead
\begin{equation*}
\begin{gathered}
  u_{k,0'}(t)
    =
    (k-1) t \tanh (k-1)t - 1,
  \\
  u_{k,(k-1)'}(t)
    =
    \sinh (k-1)t + (k-1)t \sech (k-1)t
\end{gathered}
\end{equation*}
provide independent solutions, each clearly unbounded.
(These solutions at $\mu=0$ and $\mu=k-1$
are easily identified geometrically:
in $(t,\theta)$ coordinates on $\ks_k^{[0,\pi]}$ the functions
$u_{k,0}(t)$,
$u_{k,k-1}(t)e^{\pm i(k-1)\theta}$,
$u_{k,0'}(t)$, 
and $u_{k,(k-1)'}(t)e^{\pm i(k-1)\theta}$
are Jacobi fields,
generated respectively by
vertical translation,
horizontal translations,
dilation based at $(0,0,\pi/2)$,
and rotations about lines in $\{z=\pi/2\}$.)
\end{proof}

\subsection*{Half-period Dirichlet-to-Neumann map}

Given $s \in \R$ (in practice $s=0,\pm 1/2$)
we define the mean-zero Sobolev space
\begin{equation}
  \dot{H}^s(\Sph^1)
    :=
    \{
      u \in H^s(\Sph^1)
      \st
      \widehat{u}(0) = 0
    \}.
\end{equation}
Then the operator
\begin{equation}
\label{DtN-momentum}
\begin{gathered}
  \nop_k : \dot{H}^{1/2}(\Sph^1) \to \dot{H}^{-1/2}(\Sph^1)
  \\
  \widehat{\nop_k u}(\ell)
    :=
    \frac{\ell^2-(k-1)^2}{\abs{\ell}}\widehat{u}(\ell)
\end{gathered}
\end{equation}
is well-defined and bounded.

In the statement below (and subsequently too)
we use on $\Sph^2$ less its poles
the cylindrical coordinates
$(t, \theta) := (\Re \log \stereop, \Im \log \stereop)$.
Then
\begin{equation}
\label{sphmet_in_cyl}
  g^{\Sph^2} = (\sech^2 t)(dt^2 + d\theta^2);
\end{equation}
in particular $\partial_t$ is the outward unit conormal
to the hemisphere $\Sph^2_-$ along the equator
$\{t=0\} = \{z=0\}$.
We also introduce the Hermitian form
defined by $\sphop_k$ on $\Sph^2_-$:
\begin{equation}
  \bop_k(u,v)
  :=
  \langle du, dv \rangle_{L^2(\Sph^2_-)}
    - \langle u, V^{\Sph^2}_k v\rangle_{L^2(\Sph^2_-)},
\end{equation}
the Jacobi form on $\ks_k^{(0,\pi)}$ in spherical coordinates.

\begin{proposition}
\label{DtN}
For each integer $k \geq 3$
there is a unique linear map
\begin{equation*}
  \jext_k : \dot{H}^{1/2}(\Sph^1) \to H^1(\Sph^2_-)
\end{equation*}
such that, for every $f \in \dot{H}^{1/2}(\Sph^1)$,
the function $F := \jext_k f$ satisfies
\begin{enumerate}
\item  $\sphop_k F = 0$,

\item  $F(0,\theta) = f(\theta)$ 
       (in the trace sense), and

\item
  the weakly defined Neumann data
  $(\partial_t F)|_{t=0}$
  has vanishing integral on the equator:
  more precisely,
  for every $U \in H^1(\Sph^2_-)$
  whose trace on the equator is constant
  \begin{equation*}
    \bop_k(F,U) = 0;
  \end{equation*}
\end{enumerate}
moreover $\jext_k$ is bounded, and
the weakly defined Neumann data of $F$ satisfies
\begin{enumerate}[resume]
  \item $(\partial_t F)|_{t=0} = (\nop_k f) \circ \theta$:
        more precisely,
        for every $U \in H^1(\Sph^2_-)$,
        writing $u \in H^{1/2}(\Sph^1)$
        for the unique function with
        $U|_{\{t=0\}} = u \circ \theta$
        (in the trace sense),
        we have
        \begin{equation*}
          \bop_k(F,U)
          =
          \langle \nop_k f, \, u \rangle_{L^2(\Sph^1)}
        \end{equation*}
\end{enumerate}
(with the $L^2(\Sph^1)$ inner product extended by continuity
to $H^{-1/2}(\Sph^1) \times H^{1/2}(\Sph^1)$).
\end{proposition}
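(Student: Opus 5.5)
The plan is to build $\jext_k$ explicitly, one Fourier mode at a time, in the cylindrical coordinates $(t,\theta)$ on $\Sph^2_-=\{t<0\}$, using the bounded solutions from \ref{ode}. Write $f(\theta)=\sum_{\ell\neq 0}\widehat{f}(\ell)e^{i\ell\theta}$. Since $u_{k,\mu}(0)=\mu\neq 0$ for $\mu>0$, I define
\begin{equation*}
  F(t,\theta):=\sum_{\ell\neq0}\widehat{f}(\ell)\,\frac{u_{k,\abs{\ell}}(t)}{\abs{\ell}}\,e^{i\ell\theta}.
\end{equation*}
By \eqref{sphop-cylop}, $\sphop_k$ is a positive multiple of $\cylop_k$ on $\Sph^2$ less its poles, so each term solves $\sphop_k F=0$ there. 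Each term also has trace $\widehat{f}(\ell)e^{i\ell\theta}$ at $t=0$. Differentiating directly,
\begin{equation*}
  u_{k,\mu}'(t)=\bigl(\mu(\mu-(k-1)\tanh(k-1)t)-(k-1)^2\sech^2(k-1)t\bigr)e^{\mu t},
\end{equation*}
so $u_{k,\mu}'(0)/\mu=(\mu^2-(k-1)^2)/\mu$. With $\mu=\abs{\ell}$ this is exactly the multiplier in \eqref{DtN-momentum}, which explains where $\nop_k$ comes from.

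\textbf{Step 1: boundedness.} For $\mu\ge1$ I would use the uniform bounds $\abs{u_{k,\mu}(t)/\mu}\le C e^{\mu t}$ and $\abs{u_{k,\mu}'(t)/\mu}\le C\mu e^{\mu t}$ on $t\le0$, with $C$ depending only on $k$. The Dirichlet integral is conformally invariant, so on the hemisphere it equals $2\pi\sum_\ell\int_{-\infty}^0(\abs{F_\ell'}^2+\ell^2\abs{F_\ell}^2)\,dt$, and each term is $\lesssim\abs{\ell}\abs{\widehat{f}(\ell)}^2$. The $L^2$ norm carries the extra weight $\sech^2t\le1$, so it is bounded the same way. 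Hence the series converges in $H^1(\Sph^2_-)$ with $\norm{F}_{H^1}\lesssim\norm{f}_{H^{1/2}}$.

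\textbf{Step 2: weak identities (i), (iii), (iv).} I would verify the identity in (iv) for $U$ smooth on the closed hemisphere, and first for $f$ a trigonometric polynomial. Expanding $U=\sum U_\ell(t)e^{i\ell\theta}$, the form $\bop_k$ decouples across modes. On each mode I integrate by parts in $t$ on $(-\infty,0]$ against the ODE for $F_\ell$. The term at $t=-\infty$ vanishes, since $F_\ell$ and $F_\ell'$ decay like $e^{\abs{\ell}t}$ while $U_\ell$ is bounded. This leaves $2\pi\,\cc{F_\ell'(0)}U_\ell(0)=2\pi\,\cc{\widehat{\nop_k f}(\ell)}\,\widehat{u}(\ell)$. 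Summing over $\ell$ gives $\langle\nop_k f,u\rangle_{L^2(\Sph^1)}$. Density of smooth $U$ in $H^1$, continuity of the trace $H^1(\Sph^2_-)\to H^{1/2}(\Sph^1)$, and Step 1 then extend (iv) to all $U\in H^1$ and all $f\in\dot H^{1/2}$.

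The other identities follow from (iv). Taking $U$ compactly supported in the open hemisphere gives $\bop_k(F,U)=0$, so $F$ is a weak (hence, by elliptic regularity, smooth) solution of $\sphop_k F=0$ across the south pole as well; this is (i). Taking $U$ with constant trace $u$ gives $\langle\nop_k f,u\rangle=0$, since $\nop_k f$ has no zeroth Fourier coefficient; this is (iii).

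\textbf{Step 3: uniqueness.} Suppose $G\in H^1(\Sph^2_-)$ satisfies (i)--(iii) with $f=0$. Its Fourier modes $G_\ell(t)$ solve the ODE of \ref{ode} with $\mu=\abs{\ell}$ and $G_\ell(0)=0$.
\begin{itemize}
  \item For $\ell\neq0$, the companion solution grows like $e^{-\abs{\ell}t}$, which is not in $H^1$ near the pole. So $G_\ell$ is a multiple of $u_{k,\abs{\ell}}$, and $u_{k,\abs{\ell}}(0)\neq0$ forces $G_\ell=0$. At $\abs{\ell}=k-1$ the companion is $u_{k,(k-1)'}$ instead, which is also unbounded.
  \item For $\ell=0$, the companion $u_{k,0'}\sim\abs{t}$ is $\log r$ near the pole, which is not $H^1$. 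So $G_0=a\,u_{k,0}=-a(k-1)\tanh(k-1)t$, and this does vanish at $t=0$.
\end{itemize}
This surviving $\ell=0$ mode is exactly where (iii) is needed. Testing (iii) with $U\equiv1$ gives $2\pi\,a\,u_{k,0}'(0)=-2\pi a(k-1)^2=0$, so $a=0$.

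I expect the main obstacle to be the careful justification at the pole in Steps 2 and 3. One must pass from the mode-wise ODE analysis to genuine $H^1$ statements about which solutions are admissible and whether the boundary term at $t=-\infty$ vanishes. This is handled by the $e^{\pm\abs{\ell}t}$ and logarithmic asymptotics above together with smooth density, rather than any delicate estimate. The uniform-in-$\mu$ bounds of Step 1 are routine given the explicit formula for $u_{k,\mu}$.
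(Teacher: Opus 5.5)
Your proposal is correct and follows essentially the paper's proof: the same mode-by-mode construction with $\abs{\ell}^{-1}u_{k,\abs{\ell}}(t)e^{i\ell\theta}$, the $H^1$ bound from the $e^{\abs{\ell}t}$ decay, Green's identity (which you carry out mode-wise in $t$) extended by density, and uniqueness reduced to the $\ell=0$ mode $\tanh (k-1)t$, which (iii) then kills. The only minor difference is that you exclude the companion solutions directly by $H^1$ membership near the pole, whereas the paper first uses elliptic regularity to get boundedness and then applies \ref{ode}.
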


\begin{proof}
Let $f \in \dot{H}^{1/2}(\Sph^1)$.
Referring to \ref{ode}
and working in the cylindrical $(t,\theta)$ coordinates
on $\Sph^2 \setminus \{z=\pm 1\}$,
we define for each $L \in \Z \cap (0,\infty)$
the function
\begin{equation}
  F_L(t,\theta)
  :=
  \sum_{\{\ell \in \Z \st 0 < \abs{\ell} < L\}}
  \abs{\ell}^{-1}
    \widehat{f}(\ell)
    \, u_{k,\abs{\ell}}(t)
    \, e^{i \ell \theta},
\end{equation}
each summand of which extends smoothly
across the south pole,
so that $F_L$ is smooth
on the closed hemisphere $\{z \leq 0\}$.
By virtue of \ref{sphop-cylop}, \ref{ode},
and the smoothness,
we also have $\sphop_k F_L = 0$.

From \ref{sphmet_in_cyl}
and the pointwise bounds
\begin{equation}
  \abs{u_{k,\abs{\ell}}(t)}
  \leq
  (\abs{\ell} + k) \, e^{\abs{\ell} t},
  \qquad
  \abs{u_{k,\abs{\ell}}'(t)}
  \leq
  (\abs{\ell} + k)^2 \, e^{\abs{\ell} t}
  \qquad
  (t \leq 0)
\end{equation}
we find
\begin{equation}
  \norm{F_L}_{H^1(\Sph^2_-)}^2
  \leq
  C_k
  \sum_{\{\ell \in \Z \st 0 < \abs{\ell} < L\}}
    \abs{\ell} \, \abs{\widehat{f}(\ell)}^2
  \leq
  C_k \norm{f}_{\dot{H}^{1/2}(\Sph^1)}^2
\end{equation}
for a constant $C_k$ depending only on $k$,
the modes being orthogonal
and each mode integral controlled by
$\int_{-\infty}^0 e^{2\abs{\ell}t} \, dt = (2\abs{\ell})^{-1}$.
It follows that
\begin{equation}
  F := \lim_{L \to \infty} F_L \in H^1(\Sph^2_-),
\end{equation}
taking the limit in $H^1$.

Since each $F_L$ satisfies $\sphop_k F_L = 0$ (classically),
the $H^1$ convergence implies (i).
Since $u_{k,\abs{\ell}}(0) = \abs{\ell}$,
each $F_L$ has (classical) trace
\begin{equation}
  F_L(0,\theta)
  =
  f_L(\theta)
  :=
  \sum_{\{\ell \in \Z \st 0 < \abs{\ell} < L\}}
    \widehat{f}(\ell)e^{i \ell \theta},
\end{equation}
and so the $H^1$ convergence (and the trace inequality for $H^1$ functions)
also implies (ii).

Now let $U \in H^1(\Sph^2_-)$
and $u \in H^{1/2}(\Sph^1)$
with $U|_{t=0} = u \circ \theta$.
Since
$u_{k,\abs{\ell}}'(0) = \ell^2 - (k-1)^2$,
each $F_L$ has classical Neumann data
$(\partial_t F_L)|_{t=0} = (\nop_k f_L) \circ \theta$,
so Green's identity for the smooth $F_L$
(extended to $H^1$ test functions by density)
gives
\begin{equation*}
  \bop_k(F_L,U)
  =
  \int_{\Sph^1}
    \cc{(\nop_k f_L)} \, u \, d\theta
  =
  \langle \nop_k f_L, \, u \rangle_{L^2(\Sph^1)}.
\end{equation*}
Letting $L \to \infty$,
the left-hand side converges to that of (iv)
by the $H^1$ convergence $F_L \to F$,
while the right-hand side converges to
$\langle \nop_k f, \, u \rangle_{L^2(\Sph^1)}$,
since $f_L \to f$ in $\dot{H}^{1/2}(\Sph^1)$
and $\nop_k$ is bounded;
this establishes (iv),
and (iii) is the case of constant trace,
the right-hand side of (iv) then vanishing
because $\nop_k f$ has vanishing mean.

For uniqueness suppose $F \in H^1(\Sph^2_-)$
satisfies (i)--(iii) with $f = 0$.
Since (i) holds on the entire open hemisphere
and $\sphop_k$ has smooth coefficients,
elliptic regularity ensures that $F$ is smooth,
in particular bounded near the pole.
Thus for each $\ell \in \Z$
the function $\widehat{F(t,\cdot)}(\ell)$
is a bounded rotationally invariant solution to
$\cylop_k u = \ell^2 u$
on $(-\infty,0) \times \Sph^1$,
which moreover by (ii)
vanishes at $t = 0$.
Using \ref{ode}, we conclude that
$F(t,\theta)$ is a constant multiple
of $\tanh (k-1)t$,
and (iii) then forces $F=0$,
completing the proof.

\end{proof}

\subsection*{Fourier decomposition}
Since $\ks_k$ is invariant under $\rot_z^{2\pi/k}$,
our problem is susceptible to Fourier analysis
over the group $\Z_k$.
We will say that a (complex-valued) function $u$
defined on a $\rot_z^{2\pi/k}$-invariant subset
of $\ks_k$ or $\ks_{k,m}$
belongs to Fourier class $j$
if $u \circ \rot_z^{2\pi/k} = e^{2\pi i j/k}u$.
We also define, for each $s \in \R \cup \{-\infty\}$
and each $j \in \Z_k$
\begin{equation}
\label{Z_k-circle-Sob}
\begin{aligned}
  H^s_{(j;k)}(\Sph^1)
  &:=
  \{
    f \in H^s(\Sph^1)
    \st
    f(\phi - \tfrac{2\pi}{k}) = e^{2\pi i j/k} f(\phi)
  \}
  \\
  &=
  \{
    f \in H^s(\Sph^1)
    \st
    \supp \widehat{f}
    \subseteq
    -j + k\Z
  \}.
\end{aligned}
\end{equation}
Then
\begin{equation}
  H^s(\Sph^1)
    =
    \bigoplus_{j \in \Z_k} H^s_{(j;k)}(\Sph^1),
  \quad
  \dot{H}^s(\Sph^1)
    \supset
    \bigoplus_{j \neq 0} H^s_{(j;k)}(\Sph^1),
\end{equation}
the sums orthogonal in $L^2(\Sph^1)$.

We similarly define
\begin{equation}
  H^1_{(j;k)}(\Sph^2_-)
  :=
  \{
    u \in H^1(\Sph^2_-)
    \st
      u \circ \rot_z^{-2\pi/k}
      =
      e^{2\pi i j/k}u
  \},
\end{equation}
which are also pairwise orthogonal and span $H^1(\Sph^2_-)$.

We further decompose $H^1_{(0;k)}(\Sph^2_-)$
into the closed subspace
\begin{equation}
  H^1_{\Sph^1}(\Sph^2_-)
  :=
  \{
    u \in H^1(\Sph^2_-)
    \st
    \forall \phi \in \R
      \quad
      u \circ \rot_z^\phi = u
  \}
  \leq
  H^1_{(0;k)}(\Sph^2_-)
\end{equation}
of $\Sph^1$-invariant functions
and its $L^2(\Sph^2_-)$ orthogonal complement
(in $H^1_{(0;k)}(\Sph^2_-)$)
\begin{equation}
  H^1_{(0;k),\perp}(\Sph^2_-)
  :=
  \{
    u \in H^1_{(0;k)}(\Sph^2_-)
    \st
    \textstyle{\int u(\cdot,\theta) \, d\theta}
      = 0
  \},
\end{equation}
consisting of functions with zero angular mean.

We observe that the trace map preserves each such Fourier class,
\begin{equation}
  H^1_{(j;k)}(\Sph^2_-) \ni u
  \mapsto
  (u|_{t=0} \circ \theta) \in H^{1/2}_{(j;k)}(\Sph^1)
\end{equation}
as do $\jext_k$ (as can be seen from its uniqueness)
and $\nop_k$,
for $j \neq 0$,
and likewise with
$H^1_{(0;k),\perp}(\Sph^2_-)$ in place of $H^1_{(j;k)}(\Sph^2_-)$
and $\dot{H}^{1/2}(\Sph^1) \cap H^{1/2}_{(0;k)}(\Sph^1)$
in place of $H^{1/2}_{(j;k)}(\Sph^1)$.

\subsection*{Half-period index and nullity}

We define
\begin{equation}
\label{ind_nul_half_period_D}
\begin{gathered}
  \begin{aligned}
  \ind_\D(\ks_k^{(0,\pi)})
  :=
  \sup
  \{
    &\dim X
    \st
    X \leq C_c^\infty(\ks_k^{(0,\pi)})
    \mbox{ and }
  \\
    &\forall u \in X \setminus \{0\}
    \quad
    \norm{\nabla_{g_k} u}_{L^2(g_k)}^2
    <
    \norm{\abs{A_k}_{g_k} u}_{L^2(g_k)}^2
  \},
  \end{aligned}
  \\
  \nul_\D(\ks_k^{(0,\pi)})
  :=
  \dim
  \{
    u \in C^\infty(\ks_k^{[0,\pi]})
    \st
    \jop_k u = 0,
    ~
    \norm{u}_{C^0} < \infty,
    ~
    u|_{\partial \ks_k^{(0,\pi)}} = 0
  \},
\end{gathered}
\end{equation}
and we define $\ind_\N(\ks_k^{(0,\pi)})$
and $\nul_\N(\ks_k^{(0,\pi)})$
by instead taking
$X \leq C_c^\infty(\ks_k^{[0,\pi]})$
and replacing the condition
$u|_{\partial \ks_k^{(0,\pi)}} = 0$
by $(\partial_t u)|_{\partial \ks_k^{(0,\pi)}} = 0$.
Since both boundary conditions
are invariant under $\rot_z^{2\pi/k}$,
we can additionally define,
for each $j \in \Z_k$,
$\ind_{\D,j}(\ks_k^{(0,\pi)})$,
$\nul_{\D,j}(\ks_k^{(0,\pi)})$,
$\ind_{\N,j}(\ks_k^{(0,\pi)})$,
and $\nul_{\N,j}(\ks_k^{(0,\pi)})$
in the same way,
but admitting only variations and Jacobi fields $u$
in the Fourier class $j$,
meaning, we recall,
that
$u \circ \rot_z^{2\pi/k} = e^{2\pi i j/k} u$.

We also define
\begin{equation}
  \nul_{\D,j}(\sphop_k, \Sph^2_-),
  \quad
  \nul_{\N,j}(\sphop_k,\Sph^2_-)
\end{equation}
to be the dimension
of the $j$-equivariant kernel of $\sphop_k$ on the hemisphere
subject to the indicated boundary condition
and
\begin{equation}
  \ind_{\D,j}(\sphop_k, \Sph^2_-),
  \quad
  \ind_{\N,j}(\sphop_k, \Sph^2_-)
\end{equation}
to be the sums of the dimensions
of the $j$-equivariant eigenspaces of $-\sphop_k$
on the hemisphere with strictly negative eigenvalue,
subject to the indicated boundary condition.

\begin{lemma}
\label{junction-density}
For any integers $k \geq 3$ and $0 \leq j \leq k-1$
the functions in $C^\infty(\closure(\Sph^2_-))$
which are of Fourier class $j$
and vanish on a neighborhood of $\{\stereop^{2k} = -1\}$
are dense in $H^1_{(j;k)}(\Sph^2_-)$.
\end{lemma}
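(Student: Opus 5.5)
The approach is the standard fact that points have zero $H^1$-capacity in two dimensions, combined with averaging over $\Z_k$ to restore the Fourier class.

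First, density without the vanishing condition. $C^\infty(\closure(\Sph^2_-))$ is dense in $H^1(\Sph^2_-)$, since the closed hemisphere is a smooth compact manifold with boundary. The projection
\begin{equation*}
  P_j u := \frac{1}{k}\sum_{n=0}^{k-1} e^{-2\pi i jn/k}\, u \circ \rot_z^{-2\pi n/k}
\end{equation*}
is bounded on $H^1(\Sph^2_-)$, because rotations are isometries of the round metric. It maps smooth functions to smooth functions of Fourier class $j$, and it fixes $H^1_{(j;k)}(\Sph^2_-)$. Hence smooth class-$j$ functions are dense in $H^1_{(j;k)}(\Sph^2_-)$, and it suffices to approximate a fixed smooth class-$j$ function $u$ by smooth class-$j$ functions vanishing near the finite set $Q := \{\stereop^{2k} = -1\}$ of $2k$ equatorial points.

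Second, the logarithmic cutoff. Fix $\varepsilon>0$ small and let $d$ denote spherical distance to $Q$. Define $\chi_\varepsilon := \eta(\log(1/d)/\log(1/\varepsilon))$, where $\eta$ is smooth, equals $1$ on $(-\infty,1]$ and $0$ on $[2,\infty)$. Then $\chi_\varepsilon$ vanishes where $d<\varepsilon^2$ and equals $1$ where $d>\varepsilon$. We have $|\nabla\chi_\varepsilon| \lesssim (d\log(1/\varepsilon))^{-1}$ on the annuli $\varepsilon^2<d<\varepsilon$ around each point of $Q$, so
\begin{equation*}
  \int |\nabla \chi_\varepsilon|^2 \lesssim \frac{2k}{\log^2(1/\varepsilon)}\int_{\varepsilon^2}^{\varepsilon}\frac{dr}{r} = \frac{2k}{\log(1/\varepsilon)} \to 0.
\end{equation*}
The intersection with the closed hemisphere only halves the annuli, which is harmless. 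Since $Q$ is $\rot_z^{2\pi/k}$-invariant, $d$ is invariant too. Therefore $\chi_\varepsilon$ is rotation invariant, and $\chi_\varepsilon u$ remains of class $j$.

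Third, convergence. We have
\begin{equation*}
  \norm{u-\chi_\varepsilon u}_{H^1} \le \norm{(1-\chi_\varepsilon)u}_{L^2} + \norm{(1-\chi_\varepsilon)\nabla u}_{L^2} + \norm{u}_{L^\infty}\norm{\nabla\chi_\varepsilon}_{L^2}.
\end{equation*}
The first two terms tend to $0$ because they are integrals over sets of measure $O(\varepsilon^2)$ of bounded quantities. The last term tends to $0$ by the estimate above. Finally, $\chi_\varepsilon$ is only Lipschitz as a function of $d$ near nothing problematic: $d$ is smooth away from $Q$ and from the cut loci, which lie far from $Q$ for small $\varepsilon$. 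So $\chi_\varepsilon u$ is smooth on the closed hemisphere.

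The only real point of the proof is the logarithmic cutoff. It is needed because a naive linear cutoff at scale $\varepsilon$ has gradient energy of order one, which does not vanish, in two dimensions. The rest is bookkeeping.
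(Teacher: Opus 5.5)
Your proof is correct and uses the same key device as the paper: a $\rot_z^{2\pi/k}$-invariant logarithmic cutoff at the $2k$ junction points, which works because points have zero capacity in two dimensions. The only difference is the order of the steps. You first approximate by smooth class-$j$ functions, using the standard density theorem together with your averaging projection $P_j$; boundedness is then automatic, so the paper's truncation and final symmetric mollification are not needed. The paper's order (truncate, cut off, then mollify) is the one that carries over to \ref{DN-density} and \ref{slit-density}, where no off-the-shelf density theorem is available near the junction points.
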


\begin{proof}
Truncate the value to obtain a bounded function.
Apply a logarithmic cutoff at each puncture,
symmetrically in $\rot_z^{2\pi/k}$,
and then mollify, also symmetrically.
\end{proof}

\begin{lemma}
\label{puncture-removal}
For integers $k \geq 3$ and $0 \leq j \leq k-1$
let $V$ denote either
$H^1_{(j;k)}(\Sph^2_-)$
or
$H^1_0(\Sph^2_-) \cap H^1_{(j;k)}(\Sph^2_-)$,
and let $u \in V$ satisfy $\bop_k(u,v) = 0$
for every $v \in V$
vanishing on a neighborhood of $\{\stereop^{2k} = -1\}$.
Then $\bop_k(u,v) = 0$ for every $v \in V$.
\end{lemma}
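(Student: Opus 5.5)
The statement is a density argument: the punctures $\{\stereop^{2k}=-1\}$ are finitely many points on the equator, which have zero $H^1$-capacity in dimension two. So every $v \in V$ can be approximated in $H^1$ by elements of $V$ vanishing near them, and $\bop_k(u,\cdot)$ is continuous on $H^1(\Sph^2_-)$.

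First I check that $\bop_k$ is bounded on $H^1(\Sph^2_-)\times H^1(\Sph^2_-)$. This is immediate, since $V_k^{\Sph^2}$ is smooth and bounded (it takes values in $[0,2(k-1)^2]$). Hence $v \mapsto \bop_k(u,v)$ is a continuous (conjugate-)linear functional on $H^1(\Sph^2_-)$. It therefore suffices to show that the subspace $V_0 \leq V$ of functions vanishing near the punctures is $H^1$-dense in $V$. The hypothesis then passes to the limit.

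For $V = H^1_{(j;k)}(\Sph^2_-)$, density is exactly \ref{junction-density}, since the approximants constructed there are in particular elements of $V$ vanishing near the punctures. For $V = H^1_0(\Sph^2_-)\cap H^1_{(j;k)}(\Sph^2_-)$, I run the same construction while preserving the zero trace, in three steps.
\begin{enumerate}
\item Given $v \in V$, first approximate it by $C_c^\infty(\Sph^2_-)$ functions. Averaging them over the $\Z_k$ action with the character $e^{2\pi i j/k}$ keeps them in Fourier class $j$. This projection is $H^1$-bounded and fixes $v$, so the averaged approximants still converge to $v$.
\item Such functions have compact support in the open hemisphere, hence already vanish near the equator, and in particular near the punctures.
\item Alternatively, if one wants the argument uniform in both cases, multiply by a $\rot_z^{2\pi/k}$-invariant logarithmic cutoff $\chi_\epsilon$. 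Take $\chi_\epsilon$ equal to $0$ within distance $\epsilon^2$ of each puncture, equal to $1$ outside distance $\epsilon$, and interpolating as $\log(r/\epsilon^2)/\log(1/\epsilon)$.
\end{enumerate}

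The only substantive estimate is the one for the cutoff in step (iii), namely that $\chi_\epsilon v \to v$ in $H^1$. Multiplication by $\chi_\epsilon$ preserves vanishing trace and Fourier class. For bounded $v$ (obtained first by truncating real and imaginary parts, which converges in $H^1$), the convergence follows from $\norm{v\,\nabla\chi_\epsilon}_{L^2}^2 \leq \norm{v}_{L^\infty}^2 \norm{\nabla \chi_\epsilon}_{L^2}^2 \lesssim \norm{v}_{L^\infty}^2/\log(1/\epsilon) \to 0$. The remaining terms converge by dominated convergence. Truncation commutes with the $\Z_k$ symmetry only when $j=0$. For general $j$, I handle this by truncating $\abs{v}$ radially, replacing $v$ by $v\min(1,M/\abs{v})$, which is equivariant and converges in $H^1$ as $M\to\infty$.

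I expect the main obstacle to be this bookkeeping: keeping the approximants simultaneously in the correct Fourier class and in $H^1_0$, while cutting off near the punctures. Everything else is the continuity of $\bop_k$.
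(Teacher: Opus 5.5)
Your proposal is correct and follows the paper's proof: $\bop_k(u,\cdot)$ is continuous on $H^1(\Sph^2_-)$ because the potential is bounded, and the functions vanishing near the punctures are dense in $V$, by \ref{junction-density} in the Neumann case and by class-$j$ elements of $C_c^\infty(\Sph^2_-)$ in the Dirichlet case. Your logarithmic-cutoff step (iii) is valid but not needed for the Dirichlet case, and your radial truncation $v\min(1,M/\abs{v})$ correctly supplies the class-preserving truncation that the paper leaves implicit in \ref{junction-density}.
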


\begin{proof}
Since $V^{\Sph^2}_k$ is bounded and $du \in L^2(\Sph^2_-)$,
the functional $\bop_k(u,\cdot)$ is continuous on $H^1(\Sph^2_-)$,
and it vanishes by hypothesis on those $v \in V$
supported away from $\{\stereop^{2k} = -1\}$,
which are dense in $V$:
for $V = H^1_{(j;k)}(\Sph^2_-)$ by \ref{junction-density},
and for $V = H^1_0(\Sph^2_-) \cap H^1_{(j;k)}(\Sph^2_-)$
because $V$ contains the class-$j$ functions in $C_c^\infty(\Sph^2_-)$.
\end{proof}

\begin{lemma}
\label{index-nullity-conf-comp-equiv}
For any integers $k \geq 3$ and $0 \leq j \leq k-1$
\begin{equation*}
\begin{gathered}
  \nul_{\D,j}(\ks_k^{(0,\pi)})
    =
    \nul_{\D,j}(\sphop_k,\Sph^2_-),
  \quad
  \nul_{\N,j}(\ks_k^{(0,\pi)})
    =
    \nul_{\N,j}(\sphop_k,\Sph^2_-),
  \\
  \ind_{\D,j}(\ks_k^{(0,\pi)})
    =
    \ind_{\D,j}(\sphop_k, \Sph^2_-),
  \quad
  \ind_{\N,j}(\ks_k^{(0,\pi)})
    =
    \ind_{\N,j}(\sphop_k, \Sph^2_-).
\end{gathered}
\end{equation*}
\end{lemma}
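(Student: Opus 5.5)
The plan is to transfer everything through the isometry $\stereop^{-1}\circ\ewcover_k$ from $(\ks_k^{(0,\pi)},\sph{g}_k)$ onto the open southern hemisphere, using conformal invariance of the Dirichlet energy in dimension two. On $\ks_k^{(0,\pi)}$ we have $g_k = \tfrac{\rho_k^2(1+\abs{\ewcover_k}^2)^2}{4}\,\sph{g}_k$. Hence $\norm{\nabla_{g_k}u}^2_{L^2(g_k)} = \norm{\nabla_{\sph{g}_k}u}^2_{L^2(\sph{g}_k)}$. Likewise $\norm{\abs{A_k}_{g_k}u}^2_{L^2(g_k)} = \int \abs{u}^2 V\, dA_{\sph{g}_k}$, where $V$ is the potential of $\sph{\jop}_k$. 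By \eqref{sphop_intertwine} this pulls back $V^{\Sph^2}_k$. So for $u$ on the half period and $U$ its pushforward to $\Sph^2_-$, the Jacobi quadratic form of $u$ equals $\bop_k(U,U)$. Fourier class $j$ is preserved, since by \eqref{sph_symmetries} $\rot_z^{2\pi/k}$ on $\ks_k$ corresponds to $\rot_z^{-2\pi/k}$ on $\Sph^2$, and this matches the definition of $H^1_{(j;k)}(\Sph^2_-)$.

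\emph{Nullity.} Jacobi fields correspond exactly to kernel elements by \eqref{sphop_intertwine}, because $\jop_k u=0$ if and only if $\sph{\jop}_k u=0$. The boundary conditions also correspond. Dirichlet is clear. For Neumann, $\partial_t$ is the outward conormal in both pictures up to a positive factor, and the two conformal factors agree on $\{z=n\pi\}$.

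The remaining issue for nullity is the punctures $\{\stereop^{2k}=-1\}$ on the equator, which correspond to the ends of the half period. A bounded Jacobi field on $\ks_k^{[0,\pi]}$ pushes forward to a bounded solution on the closed hemisphere minus finitely many boundary points. By Lemma~\ref{jacobi_finiteness} (or by a direct argument), boundedness gives finite energy, so the pushforward lies in $H^1$. It is then a weak solution tested against functions vanishing near the punctures. Lemma~\ref{puncture-removal} upgrades this to a weak solution with the $H^1$ (Dirichlet or natural Neumann) condition on the whole hemisphere. Elliptic regularity for mixed data near isolated boundary points then gives a genuine eigenfunction with eigenvalue $0$.

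Conversely, an eigenfunction of $\sphop_k$ on the hemisphere is smooth up to the equator and hence bounded, and it pulls back to a bounded Jacobi field smooth on $\ks_k^{[0,\pi]}$. The Neumann case needs the reflection across the equator: the even extension solves $\sphop_k F=0$ on $\Sph^2$ because $V^{\Sph^2}_k$ is invariant under $\abs{\stereop}\mapsto 1/\abs{\stereop}$.

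\emph{Index.} The index defined via compactly supported variations equals the number of negative eigenvalues of the Friedrichs realization on the hemisphere. The test space is $C_c^\infty(\ks_k^{(0,\pi)})$ in the Dirichlet case and $C_c^\infty(\ks_k^{[0,\pi]})$ in the Neumann case, restricted to class $j$. Its pushforward consists of class-$j$ functions supported away from the punctures. In the Neumann case it is exactly the space in Lemma~\ref{junction-density}, which is dense in $H^1_{(j;k)}(\Sph^2_-)$. In the Dirichlet case it is $C_c^\infty$ of the open hemisphere, dense in $H^1_0\cap H^1_{(j;k)}$. Because $\bop_k$ is continuous on $H^1$ (the potential is bounded), the supremum of dimensions of negative-definite subspaces is the same over the dense subspace and over its $H^1$ closure. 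By min-max on the compact hemisphere, that supremum is the number of negative eigenvalues counted with multiplicity. This gives $\ind_{\D,j}$ and $\ind_{\N,j}$.

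The main obstacle I expect is the puncture and regularity step in the nullity comparison: showing that a bounded Jacobi field has finite energy near the ends and is a weak solution across them, and, in the other direction, that hemisphere eigenfunctions are smooth up to the equator. Both are handled by Lemmas~\ref{junction-density} and~\ref{puncture-removal} together with the removable-singularity argument in Lemma~\ref{jacobi_finiteness}.
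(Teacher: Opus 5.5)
Your proposal is correct and follows essentially the same route as the paper: conformal invariance of the Jacobi form under $\stereop^{-1}\circ\ewcover_k$, density (Lemma~\ref{junction-density} or $C_c^\infty(\Sph^2_-)$) together with min-max for the index, and Lemma~\ref{jacobi_finiteness} plus Lemma~\ref{puncture-removal} for the nullity. One step needs filling in. Lemma~\ref{jacobi_finiteness} is stated for Jacobi fields on a full quotient $\ks_{k,m}$, so, as the paper does, you must first extend the bounded half-period field to a Jacobi field on $\ks_{k,1}$ by odd (Dirichlet) or even (Neumann) reflection through $\{z=0\}$ before invoking it. In the converse direction, getting boundedness directly from regularity up to the closed hemisphere is a valid shortcut for the paper's appeal to that lemma.
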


\begin{proof}
The map $\stereop^{-1} \circ \ewcover_k$
restricts to a conformal diffeomorphism of $\ks_k^{(0,\pi)}$ onto $\Sph^2_-$,
under which, by the conformal invariance of the Jacobi form,
the latter pulls back to $\bop_k$,
the Dirichlet boundary condition
becomes the vanishing of the trace on the equator,
and the Neumann boundary condition becomes no condition;
accordingly we write $V$ for
$H^1_0(\Sph^2_-) \cap H^1_{(j;k)}(\Sph^2_-)$
in the Dirichlet case
and for $H^1_{(j;k)}(\Sph^2_-)$
in the Neumann case.

For the index,
a class-$j$ subspace of $C_c^\infty(\ks_k^{(0,\pi)})$ (Dirichlet)
or of $C_c^\infty(\ks_k^{[0,\pi]})$ (Neumann)
on which the Jacobi form is negative definite
pushes forward to such a subspace of $V$ for $\bop_k$,
so the min-max characterization of the eigenvalues of $\bop_k$ on $V$
gives
$\ind_{\bullet,j}(\ks_k^{(0,\pi)}) \leq \ind_{\bullet,j}(\sphop_k,\Sph^2_-)$.
Conversely a negative eigenspace of $\bop_k$ on $V$
is approximated in $H^1(\Sph^2_-)$,
preserving both class $j$ and negativity,
by the class-$j$ functions of $C_c^\infty(\Sph^2_-)$ in the Dirichlet case
and by the functions of \ref{junction-density} in the Neumann case;
these pull back to admissible test fields
and yield the reverse inequality.

For the nullity,
let $u$ be a bounded class-$j$ Jacobi field on $\ks_k^{[0,\pi]}$
satisfying the boundary condition.
Odd (Dirichlet) or even (Neumann) reflection across $\{z=0\}$,
permissible since $\refl_{\{z=0\}}$ preserves $\jop_k$,
extends $u$ to a bounded Jacobi field on $\ks_{k,1}$,
of finite Dirichlet energy by \ref{jacobi_finiteness};
hence $u \in V$.
Integrating by parts against $v \in V$
supported away from $\{\stereop^{2k} = -1\}$,
the boundary term vanishes
(the trace of $v$ on the equator in the Dirichlet case,
$\partial_t u$ on it in the Neumann case),
so $\bop_k(u,v) = 0$ for such $v$
and, by \ref{puncture-removal}, for all $v \in V$;
thus $u$ lies in the $j$-equivariant kernel of $\sphop_k$ on $\Sph^2_-$.
Conversely a member of that kernel is,
by elliptic regularity and reflection across the equatorial arcs,
smooth up to the equator away from $\{\stereop^{2k} = -1\}$
and there satisfies the boundary condition,
so it extends to a Jacobi field on $\ks_{k,1}$
of finite Dirichlet energy,
hence bounded by \ref{jacobi_finiteness}.
The two spaces therefore have equal dimension.
\end{proof}

\begin{lemma}
\label{hemi_first_D_eval}
For each integer $k \geq 3$
the least Dirichlet eigenvalue of $-\sphop_k$
on $\Sph^2_-$ is zero and simple,
with eigenspace contained in
$
  H^1_{\Sph^1}(\Sph^2_-)
  \subset
  H^1_{(0;k)}(\Sph^2_-)
$.
\end{lemma}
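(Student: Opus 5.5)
We must show that the least Dirichlet eigenvalue of $-\sphop_k$ on $\Sph^2_-$ is $0$, that it is simple, and that its eigenfunction is rotationally invariant. The plan has three steps: exhibit an explicit Dirichlet Jacobi field, use its sign to identify it as the ground state, and then get rotational invariance from simplicity.

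\emph{Step 1: a zero mode.} By \ref{ode} with $\mu = 0$, the function
\begin{equation*}
  u_{k,0}(t) = -(k-1)\tanh (k-1)t
\end{equation*}
solves $\cylop_k u_{k,0} = 0$ on $(-\infty,0]\times\Sph^1$. It vanishes at $t=0$, so it satisfies the Dirichlet condition on the equator. As $t \to -\infty$ it tends to the constant $k-1$, and $\tanh(k-1)t$ is a smooth function of $\abs{\stereop}^{2(k-1)}$. Hence $u_{k,0}$ extends smoothly across the south pole to a function $U$ on the closed hemisphere. By \ref{sphop-cylop}, $\sphop_k U = 0$ away from the pole; by smoothness this holds on the whole hemisphere. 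So $U \in H^1_0(\Sph^2_-)$ is a Dirichlet eigenfunction of $-\sphop_k$ with eigenvalue $0$. Geometrically, $U$ is the vertical-translation Jacobi field. Moreover $U > 0$ on the open hemisphere, since $t<0$ there.

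\emph{Step 2: $0$ is the least eigenvalue.} I would use the standard principle that a positive Dirichlet eigenfunction on a connected domain must belong to the lowest eigenvalue. The domain $\Sph^2_-$ is connected and the potential $V_k^{\Sph^2}$ is smooth and bounded, so the Dirichlet problem has a discrete spectrum. Its first eigenfunction $\varphi_1$ can be chosen with $\varphi_1 \geq 0$: replacing $\varphi_1$ by $\abs{\varphi_1}$ does not change the Rayleigh quotient, and the strong maximum principle (Harnack) then gives $\varphi_1 > 0$ in the interior. If the least eigenvalue $\lambda_1$ were strictly below $0$, self-adjointness would force $\langle \varphi_1, U\rangle_{L^2} = 0$, which is impossible because both functions are positive. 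Hence $\lambda_1 = 0$.

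\emph{Step 3: simplicity and rotational invariance.} Simplicity of the lowest eigenvalue follows from the same positivity argument. Any real eigenfunction for $\lambda_1$ minimizes the Rayleigh quotient, so its absolute value does too, and by the strong maximum principle it has a strict sign. Two such eigenfunctions cannot be $L^2$-orthogonal, so the eigenspace is one-dimensional. For complex-valued eigenfunctions, apply this to the real and imaginary parts, which are themselves eigenfunctions since $\sphop_k$ is real. The eigenspace is therefore $\C U$. Since $U$ depends only on $t$, it lies in $H^1_{\Sph^1}(\Sph^2_-) \subset H^1_{(0;k)}(\Sph^2_-)$. Alternatively, invariance follows from simplicity directly, because $\sphop_k$ commutes with every rotation about the polar axis.

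The main point needing care is the regularity of $U$ at the south pole together with the precise positivity/Harnack argument for a Schrödinger operator whose potential is positive. The positivity argument is standard once the potential is bounded, so no real obstacle is expected.
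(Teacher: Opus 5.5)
Your proposal is correct and follows the same route as the paper. Both arguments observe that the vertical-translation Jacobi field $u_{k,0}(t) = -(k-1)\tanh (k-1)t$ is positive on the open hemisphere and vanishes on the equator, hence is a first Dirichlet eigenfunction; it follows that the least eigenvalue is $0$, simple, and has $\Sph^1$-invariant eigenspace. You just spell out the standard details the paper leaves implicit: smoothness at the south pole, the positivity and orthogonality argument, and simplicity.
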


\begin{proof}
By \ref{ewcover} the $z$-component of the unit normal
(equivalently, in $(t,\theta)$ coordinates,
the function
$u_{k,0}$
of \ref{ode},
up to normalization)
has constant sign on $\ks_k^{(0,\pi)}$
and vanishes on its boundary,
so the (bounded) Jacobi field generated by vertical translations
corresponds under $\stereop^{-1} \circ \ewcover_k$
to a first Dirichlet eigenfunction of $\sphop_k$ on the hemisphere,
invariant under $\Sph^1$ (a fortiori of class $j=0$)
(both manifestly and by virtue of being a first eigenfunction
for an $\Sph^1$-invariant problem).
\end{proof}

For $j \neq 0$
we have
$H^{1/2}_{(j;k)}(\Sph^1) < \dot{H}^{1/2}(\Sph^1)$,
so $(u,v) \mapsto \langle \nop_k u, \, v \rangle_{L^2(\Sph^1)}$
defines a Hermitian form thereon,
whose index and nullity we denote by
\begin{equation*}
\begin{gathered}
  \begin{aligned}
  \ind_{\N,j}(\nop_k)
  :=
  \sup
  \{
    &\dim X
    \st
    X \leq H^{1/2}_{(j;k)}(\Sph^1)
    \mbox{ and }
    \\
    &\;
    \forall u \in X \setminus \{0\}
    \quad
    \langle \nop_k u, \, u \rangle_{L^2(\Sph^1)} < 0
  \},
  \end{aligned}
  \\
  \nul_{\N,j}(\nop_k)
  :=
  \dim
  \{
    u \in H^{1/2}_{(j;k)}(\Sph^1)
    \st
    \forall v \in H^{1/2}_{(j;k)}(\Sph^1)
    \;\,
    \langle \nop_k u, \, v \rangle_{L^2(\Sph^1)} = 0
  \}.
\end{gathered}
\end{equation*}

For $j=0$ we define
\begin{equation}
  \ind_{\N,0,\perp}(\nop_k),
  \quad
  \nul_{\N,0,\perp}(\nop_k)
\end{equation}
by replacing $H^{1/2}_{(j;k)}(\Sph^1)$
by $\dot{H}^{1/2}(\Sph^1) \cap H^{1/2}_{(0;k)}(\Sph^1)$,
and we define
\begin{equation}
\begin{gathered}
  \begin{aligned}
  \ind_{\N,\Sph^1}(\sphop_k, \Sph^2_-)
    :=
    \sum_{\lambda < 0}
      \dim
      \{
        &u \in H^1_{\Sph^1}(\Sph^2_-)
        \st
        \forall v \in H^1_{\Sph^1}(\Sph^2_-)
        \\
        &\;\bop_k(u,v) = \lambda \langle u, v \rangle_{L^2(\Sph^2_-)}
      \},
   \end{aligned}
    \\
    \nul_{\N,\Sph^1}(\sphop_k, \Sph^2_-)
      :=
      \dim
      \{
        u \in H^1_{\Sph^1}(\Sph^2_-)
        \st
        \forall v \in H^1_{\Sph^1}(\Sph^2_-)
        \;
        \bop_k(u,v) = 0
      \},
   \end{gathered}
\end{equation}
as well as the index
$\ind_{\N,0,\perp}(\sphop_k, \Sph^2_-)$
and nullity
$\nul_{\N,0,\perp}(\sphop_k, \Sph^2_-)$
on the orthogonal complement,
by replacing $H^1_{\Sph^1}(\Sph^2_-)$
by $H^1_{(0;k),\perp}(\Sph^2_-)$.
Note that with these definitions
\begin{equation}
  \ind_{\N,0}(\sphop_k, \Sph^2_-)
  =
  \ind_{\N,\Sph^1}(\sphop_k, \Sph^2_-)
    + \ind_{\N,0,\perp}(\sphop_k, \Sph^2_-)
\end{equation}
and likewise with index replaced by nullity.

\begin{lemma}
\label{nop-sphop}
For any integers $k \geq 3$ and $1 \leq j \leq k-1$
\begin{equation*}
  \ind_{\N,j}(\sphop_k, \Sph^2_-)
  =
  \ind_{\N,j}(\nop_k)
  \quad \mbox{and} \quad
  \nul_{\N,j}(\sphop_k, \Sph^2_-)
  =
  \nul_{\N,j}(\nop_k);
\end{equation*}
additionally
\begin{equation*}
  \ind_{\N,0,\perp}(\sphop_k, \Sph^2_-)
    =
    \ind_{\N,0,\perp}(\nop_k)
  \quad \mbox{and} \quad
  \nul_{\N,0,\perp}(\sphop_k, \Sph^2_-)
    =
    \nul_{\N,0,\perp}(\nop_k).
\end{equation*}
\end{lemma}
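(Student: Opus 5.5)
The plan is to split $V := H^1_{(j;k)}(\Sph^2_-)$ (respectively $H^1_{(0;k),\perp}(\Sph^2_-)$) $\bop_k$-orthogonally as
\begin{equation*}
  V = V_0 \oplus \jext_k(W),
  \qquad
  V_0 := V \cap H^1_0(\Sph^2_-),
\end{equation*}
where $W := H^{1/2}_{(j;k)}(\Sph^1)$ (respectively $\dot{H}^{1/2}(\Sph^1) \cap H^{1/2}_{(0;k)}(\Sph^1)$). Then compare the index and nullity of $\bop_k$ on each summand.

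\emph{Step 1: the decomposition.} The trace of any $u \in V$ lies in $W$; for $j \neq 0$ the mean vanishes automatically, and for the $\perp$ class it vanishes by the zero-angular-mean condition. Since $\jext_k$ preserves Fourier classes (by uniqueness), $u - \jext_k(u|_{t=0}) \in V_0$. Directness is immediate. For $\bop_k$-orthogonality, apply \ref{DtN}(iv) with $U \in V_0$, whose trace is $0$: this gives $\bop_k(\jext_k f, U) = 0$. Also by \ref{DtN}(iv),
\begin{equation*}
  \bop_k(\jext_k f, \jext_k g) = \langle \nop_k f, g\rangle_{L^2(\Sph^1)}.
\end{equation*}
So the form on the second summand is exactly the $\nop_k$ form on $W$, and $\jext_k$ is injective there because traces recover $f$.

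\emph{Step 2: the Dirichlet summand is nonnegative with trivial kernel.} By \ref{hemi_first_D_eval}, the least Dirichlet eigenvalue of $-\sphop_k$ on $\Sph^2_-$ is $0$, simple, with eigenfunction $\Sph^1$-invariant, hence in $H^1_{\Sph^1}(\Sph^2_-)$. It is therefore $L^2$-orthogonal to every class $j \neq 0$ and to the zero-angular-mean functions. By min-max, $\bop_k$ restricted to $V_0$ is then bounded below by the second Dirichlet eigenvalue times the $L^2$ norm squared, and that eigenvalue is $>0$. So $\bop_k$ is positive definite on $V_0$ with no kernel, and in fact coercive in $H^1$ by a standard Gårding argument.

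\emph{Step 3: transfer of index and nullity.} With an orthogonal splitting whose first summand is coercive, the negative-definite subspaces behave as follows. If $X \leq V$ is negative definite, project it to $\jext_k(W)$ along $V_0$. The projection is injective on $X$, since a vector in $X \cap V_0$ would have nonnegative form, and its image is negative definite, because $\bop(a+b) = \bop(a) + \bop(b) \geq \bop(b)$. The converse inclusion is trivial. So the index of $\bop_k$ on $V$ equals $\ind(\nop_k)$ on $W$.

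The kernel (radical) of $\bop_k$ on $V$ likewise splits as the radical on $V_0$, which is $0$, plus the radical on $\jext_k(W)$. The latter corresponds via $\jext_k$ to the null space of the $\nop_k$ form on $W$.

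Finally, identify these form-theoretic quantities with the eigenvalue-counted $\ind_{\N,j}(\sphop_k,\Sph^2_-)$ and the kernel dimension $\nul_{\N,j}(\sphop_k,\Sph^2_-)$. This uses the spectral theorem for the closed semibounded form $\bop_k$ on $V$ (with discrete spectrum by compact embedding $H^1 \hookrightarrow L^2$ on the hemisphere) together with the min-max characterization. For nullity we also need that weak solutions in the radical of $V$ coincide with the Neumann kernel, which is how $\nul_{\N,j}$ was defined.

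I expect the main obstacle to be Step 2 combined with the projection argument. One must make sure the positivity on $V_0$ is strict (not merely nonnegativity) so that no kernel or index leaks from the Dirichlet part. This hinges on checking that the simple zero Dirichlet mode is excluded from each class, including the $\perp$ subclass of $j=0$, which is exactly what \ref{hemi_first_D_eval} supplies.
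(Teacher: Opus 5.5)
Your proposal is correct and follows essentially the same route as the paper: split the class-$j$ (or zero-angular-mean) space as $\jext_k(W) \oplus V_0$ via the trace, use \ref{DtN}(iv) to block-diagonalize $\bop_k$ with the $\nop_k$ form on the first block, and use \ref{hemi_first_D_eval} to make the zero-trace block positive definite. The projection argument and the splitting of the radical that you write out are details the paper leaves implicit.
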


\begin{proof}
By the min-max characterization of eigenvalues
$\ind_{\N,j}(\sphop_k, \Sph^2_-)$
and $\nul_{\N,j}(\sphop_k, \Sph^2_-)$
are the index and the nullity
of the Hermitian form
$\bop_k$
defined by $\sphop_k$,
restricted to $H^1_{(j;k)}(\Sph^2_-)$.

We define the closed subspaces 
\begin{equation}
  H^1_{J,(j;k)}(\Sph^2_-)
    :=
    \{
      u \in C^\infty(\Sph^2_-) \cap H^1_{(j;k)}(\Sph^2_-)
      \st
      \sphop_k u = 0
    \}
\end{equation}
and observe the direct sum decomposition
\begin{equation}
\label{dirichlet_decomp}
\begin{aligned}
  H^1_{(j;k)}(\Sph^2_-) 
    &=
    H^1_{J,(j;k)}(\Sph^2_-)
      \oplus
      (H^1_0(\Sph^2_-) \cap H^1_{(j;k)}(\Sph^2_-))
  \\
  u(t,\theta)
    &=
    (\jext_k u(0,\cdot))(t,\theta)
      + (u - (\jext_k \, u(0,\cdot)))(t,\theta),
\end{aligned}
\end{equation}
recalling the Jacobi extension operator
$\jext_k$ of \ref{DtN}.
(The sum is direct because
any member of the intersection
is a zero-trace solution in $H^1(\Sph^2_-)$,
so by the uniqueness argument
in the proof of \ref{DtN}
a constant multiple of $\tanh (k-1)t$,
which is $\Sph^1$-invariant
and therefore vanishes if of class $j \neq 0$
(or of zero angular mean).)
Since for any $u,v \in H^1_{(j;k)}(\Sph^2_-)$
\begin{equation}
\label{first_block}
  \bop_k(\jext_k u|_{t=0}, v)
  =
  \langle \nop_k(u(0,\cdot)), v(0,\cdot) \rangle_{L^2(\Sph^1)},
\end{equation}
we see that \ref{dirichlet_decomp}
block diagonalizes $\bop_k$.
Since \ref{hemi_first_D_eval}
implies that the zero-trace block is positive definite,
we conclude that the index and nullity
are those of the $H^1_{J,(j;k)}$ block,
which together with \ref{first_block}
completes the proof
of the $j \neq 0$ equalities.
The remaining claims,
for $j=0$ with zero angular mean,
have the same proof with different notation.
\end{proof}

\begin{proposition}
\label{D_and_N_half_period}
For any integers $k \geq 3$ and $0 \leq j \leq k-1$,
the Dirichlet and Neumann index and nullity of $\ks_k^{(0,\pi)}$
in the class $j$
take the following values.
\begin{equation*}
\begin{array}{r|cccc}
  & j = 0 & j = 1 & 2 \leq j \leq k-2 & j = k-1 \\
\hline
  \ind_{\N,j}(\ks_k^{(0,\pi)}) & 1 & 1 & 2 & 1 \\
  \nul_{\N,j}(\ks_k^{(0,\pi)}) & 0 & 1 & 0 & 1 \\
  \ind_{\D,j}(\ks_k^{(0,\pi)}) & 0 & 0 & 0 & 0 \\
  \nul_{\D,j}(\ks_k^{(0,\pi)}) & 1 & 0 & 0 & 0
\end{array}
\end{equation*}
In particular
\begin{equation*}
\begin{aligned}
  \ind_\N(\ks_k^{(0,\pi)}) &= 2k-3,
  &\qquad
  \nul_\N(\ks_k^{(0,\pi)}) &= 2,
  \\
  \ind_\D(\ks_k^{(0,\pi)}) &= 0,
  &\qquad
  \nul_\D(\ks_k^{(0,\pi)}) &= 1.
\end{aligned}
\end{equation*}
\end{proposition}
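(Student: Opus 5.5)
The plan is to transfer everything to the hemisphere and read off the answer, working class by class. First, \ref{index-nullity-conf-comp-equiv} replaces each half-period quantity in the table by the corresponding quantity for $\sphop_k$ on $\Sph^2_-$. For $j \neq 0$ and for the zero-mean part of $j=0$, \ref{nop-sphop} then reduces the Neumann quantities further to those of $\nop_k$. The remaining rotationally invariant Neumann block is handled by a one-dimensional argument built on \ref{ode}. The two totals at the end are obtained by summing the table over $j \in \Z_k$.

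\emph{Dirichlet row.} By \ref{hemi_first_D_eval}, the least Dirichlet eigenvalue of $-\sphop_k$ on $\Sph^2_-$ is $0$ and simple, with eigenfunction in $H^1_{\Sph^1}(\Sph^2_-) \subset H^1_{(0;k)}(\Sph^2_-)$. It follows at once that $\ind_{\D,j} = 0$ for every $j$. It also follows that $\nul_{\D,j}$ equals $1$ for $j=0$ and $0$ otherwise.

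\emph{Neumann, Fourier part.} The form $\langle \nop_k u, u\rangle$ is diagonal in Fourier modes, with symbol $(\ell^2-(k-1)^2)/\abs{\ell}$ on modes $\ell \in -j+k\Z$, $\ell \neq 0$.
\begin{itemize}
\item Its index counts the admissible $\ell$ with $\abs{\ell} < k-1$, and its nullity counts those with $\abs{\ell} = k-1$.
\item For $1 \leq j \leq k-1$, the only admissible $\ell$ with $\abs{\ell} \leq k-1$ are $\ell = -j$ and $\ell = k-j$.
\item The mode $\ell=-j$ satisfies $\abs{\ell}<k-1$ exactly when $j \leq k-2$, and $\abs{\ell}=k-1$ when $j=k-1$.
\item The mode $\ell = k-j$ satisfies $\abs{\ell}<k-1$ exactly when $j \geq 2$, and $\abs{\ell} = k-1$ when $j=1$.
\item This yields index $1,2,1$ for $j=1$, $2\le j\le k-2$, $j=k-1$ respectively, and nullity $1$ precisely for $j \in \{1,k-1\}$.
\item For $j=0$ with zero mean, the admissible modes are $\ell \in k\Z\setminus\{0\}$. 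All have $\abs{\ell} \geq k > k-1$, so this part contributes index $0$ and nullity $0$.
\end{itemize}

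\emph{Neumann, rotationally invariant part.} This is the only step not reducible to the Fourier computation above; I expect it to be the main, though mild, obstacle.
\begin{itemize}
\item \emph{Index at most $1$.} The subspace $H^1_0 \cap H^1_{\Sph^1}$ has codimension one in $H^1_{\Sph^1}(\Sph^2_-)$, since the trace of a radial function is a constant. On it, $\bop_k$ is nonnegative by \ref{hemi_first_D_eval}. Min-max then gives index at most $1$.
\item \emph{Index at least $1$.} Test the form on $U := \tanh(k-1)t$, a multiple of the Dirichlet eigenfunction $u_{k,0}$ of \ref{ode}, perturbed by a radial function $\phi$ with $\phi(0) = 1$. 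Since $\bop_k(U,U) = 0$ and, by Green's identity, $\bop_k(U,\phi) = \partial_t U|_{t=0} \cdot 2\pi\phi(0) = 2\pi(k-1) \neq 0$, the form takes a negative value on $U - \epsilon\phi$ for small $\epsilon>0$.
\item \emph{Nullity $0$.} A radial Neumann kernel element must be smooth at the pole, hence bounded. By \ref{ode} with $\mu=0$ it is therefore a multiple of $\tanh(k-1)t$, whose $t$-derivative at $0$ is $k-1 \neq 0$. So no nonzero such element satisfies the Neumann condition.
\end{itemize}

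Adding the $\Sph^1$ and zero-mean parts gives $\ind_{\N,0} = 1$ and $\nul_{\N,0} = 0$. Summing the table over $j$ gives $\ind_\N = 1 + 1 + 2(k-3) + 1 = 2k-3$ and $\nul_\N = 2$, together with $\ind_\D = 0$ and $\nul_\D = 1$.
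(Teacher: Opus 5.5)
Your proof is correct. For the Dirichlet row and for the Fourier part of the Neumann rows (classes $j \neq 0$ and the zero-mean part of $j=0$) it is the paper's argument. The only divergence is in the $\Sph^1$-invariant Neumann block.

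The paper gets index $\geq 1$ from the strict inequality between the least Neumann eigenvalue and the least Dirichlet eigenvalue, which is zero. It then gets index $\leq 1$ and nullity $0$ together: a real second radial Neumann eigenfunction changes sign, so it has an interior nodal circle. Strict domain monotonicity against the vanishing first Dirichlet eigenvalue then makes its eigenvalue strictly positive.

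You split this into three elementary pieces:
\begin{itemize}
\item The explicit competitor $U-\epsilon\phi$. This makes the strictness of the Neumann--Dirichlet comparison concrete through the nonzero normal derivative $k-1$, where the paper leaves it implicit.
\item Dirichlet--Neumann interlacing through the codimension-one subspace $H^1_0 \cap H^1_{\Sph^1}(\Sph^2_-)$. On its own this only gives a second radial eigenvalue $\geq 0$.
\item The ODE classification of \ref{ode}, used as in the uniqueness part of \ref{DtN}, to exclude a zero eigenvalue.
\end{itemize}
Your route uses only min-max and the explicit solutions already available, at the cost of a few more lines. The paper's is shorter but relies on nodal-domain and domain-monotonicity facts.

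One small detail should be added to your nullity step. An element of the radial kernel is a weak solution against all of $H^1(\Sph^2_-)$, not only against radial test functions. To see this, average the test function over rotations and use the rotation invariance of $\bop_k$. This is what justifies invoking interior elliptic regularity at the pole.
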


\begin{proof}
For all entries we make use of \ref{index-nullity-conf-comp-equiv}
to translate to the corresponding index and nullity
of $\sphop_k$ on $\Sph^2_-$.
The Dirichlet index and nullity for every $j$
then follow from \ref{hemi_first_D_eval},
and the Neumann index and nullity
for every $j \neq 0$
from \ref{nop-sphop}, \ref{DtN-momentum},
and the $\Sph^1$-Fourier characterization
\ref{Z_k-circle-Sob}
of $H^s_{(j;k)}(\Sph^1)$.
In the same way we compute
$
  \ind_{\N,0,\perp}(\sphop_k,\Sph^2_-) 
  =
  \nul_{\N,0,\perp}(\sphop_k,\Sph^2_-)
  =
  0
$,
so it remains only to compute
the $\Sph^1$-invariant Neumann index and nullity.
The Neumann index is at least one,
since the least eigenvalue
is strictly less than the corresponding Dirichlet eigenvalue
which we know to be zero.
Moreover, the least Neumann eigenfunction
is $\Sph^1$-invariant
(also confirmed by the index computations already completed)
since $V^{\Sph^2}_k$ is.
Now let $u$ be a real-valued
second $\Sph^1$-invariant Neumann eigenfunction.
In particular $u$ changes sign.
By the $\Sph^1$ invariance,
it has a nodal curve disjoint from the boundary.
By the strict domain monotonicity of Dirichlet eigenvalues
and the fact that the first Dirichlet eigenvalue is zero
we conclude that the eigenvalue of $u$ is strictly positive.
\end{proof}

\section{Double-period quotients}
\label{Zaremba}

\subsection*{Decomposition}
In this section,
devoted to the proof of \ref{m=2},
we consider,
for any $k \geq 3$, the quotient $\ks_{k,2}$
of $\ks_k$ by two fundamental periods.
The reflectional symmetries through the planes $\{z=n\pi\}$ with $n \in \Z$
descend to the quotient,
on which $\refl_{\{z=n\pi\}}$ and $\refl_{\{z=(n+2)\pi\}}$
induce the same isometry,
having fixed-point set
$\widetilde{P}_n := (\{z \in (n+2\Z)\pi\})/\langle \trans_z^{4\pi} \rangle$,
a union of two disjoint planes, depending on just $n \bmod 2$.
The set $\widetilde{P}_0$ cuts $\ks_{k,2}$
into two components
exchanged by the isometry induced by $\refl_{\{z=0\}}$
and each congruent to $\ks_k^{(0,2\pi)}$.
Each of these components is in turn cut in half
by $\widetilde{P}_1$,
leaving four connected components,
each congruent to
$\ks_k^{(0,\pi)}$.
Because $m=2$,
the symmetries induced
by $\refl_{\{z=0\}}$ and $\refl_{\{z=\pi\}}$
on the quotient commute.
We therefore have the decomposition
\begin{equation}
\begin{aligned}
  \ind(\ks_{k,2})
  &=
  \ind_\D(\ks_k^{(0,2\pi)})
    +\ind_\N(\ks_k^{(0,2\pi)})
  \\
  &=
  \ind_\D(\ks_k^{(0,\pi)})
    +2\ind_\DN(\ks_k^{(0,\pi)})
    +\ind_\N(\ks_k^{(0,\pi)}),
\end{aligned} 
\end{equation}
where the Dirichlet and Neumann index on $\ks_k^{(0,2\pi)}$
are defined as in \ref{ind_nul_half_period_D}
with the obvious modifications
and
where $\ind_\DN$ instead indicates the index
defined by imposing the Zaremba boundary condition
prescribing homogeneous Dirichlet data at $\{z=0\}$ and
homogeneous Neumann data at $\{z=\pi\}$.
The same decomposition applies with the nullity in place of the index.
Applying \ref{D_and_N_half_period} yields
\begin{equation}
\begin{gathered}
  \ind(\ks_{k,2})
    =
    2k-3+2\ind_\DN(\ks_k^{(0,\pi)}),
  \\
  \nul(\ks_{k,2})
    =
    3+2\nul_\DN(\ks_k^{(0,\pi)}).
\end{gathered}
\end{equation}

Since our Zaremba condition is invariant under $\rot_z^{2\pi/k}$,
we can also define, for $j \in \Z_k$,
$\ind_{\DN,j}(\ks_k^{(0,\pi)})$
and $\nul_{\DN,j}(\ks_k^{(0,\pi)})$
in the obvious way.
Working on the spherical compactification
(see \ref{DN-form-equiv}),
each Zaremba eigenvalue is strictly bracketed
by the correspondingly ordered Neumann (below)
and Dirichlet (above) eigenvalues
(using the min-max characterization of eigenvalues
and unique continuation).
This yields
\begin{equation}
\begin{gathered}
  \ind_{\D,j}(\ks_k^{(0,\pi)}) + \nul_{\D,j}(\ks_k^{(0,\pi)})
    \leq
    \ind_{\DN,j}(\ks_k^{(0,\pi)}),
  \\
    \ind_{\DN,j}(\ks_k^{(0,\pi)})
      +\nul_{\DN,j}(\ks_k^{(0,\pi)})
    \leq 
    \ind_{\N,j}(\ks_k^{(0,\pi)}).
\end{gathered}
\end{equation}

In conjunction with \ref{D_and_N_half_period} this implies
\begin{equation}
\label{z-upper}
\begin{gathered}
  \ind_{\DN,0}(\ks_k^{(0,\pi)}) = 1,
  \qquad
  \nul_{\DN,0}(\ks_k^{(0,\pi)}) = 0,
  \\
  \ind_{\DN,j}(\ks_k^{(0,\pi)})
    + \nul_{\DN,j}(\ks_k^{(0,\pi)})
  \leq
  \begin{cases}
    1 &\mbox{if } j \in \{1, k-1\}
    \\
    2 &\mbox{if } 2 \leq j \leq k-2.
  \end{cases}
\end{gathered}
\end{equation}

The proof of \ref{m=2} is then completed
by the next claim,
whose own proof constitutes the remainder of this section.

\begin{proposition}
\label{Zclasses}
For any integers $k \geq 3$ and $1 \leq j \leq k-1$
\begin{equation*}
  \ind_{\DN,j}(\ks_k^{(0,\pi)}) = 1
  \quad \mbox{and} \quad
  \nul_{\DN,j}(\ks_k^{(0,\pi)}) = 0.
\end{equation*}
\end{proposition}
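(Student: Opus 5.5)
The plan is to transfer the Zaremba problem to the southern hemisphere, as was done for the pure Dirichlet and Neumann problems, and then reduce it to the Dirichlet-to-Neumann form $\langle \nop_k f, f\rangle_{L^2(\Sph^1)}$.

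\emph{Step 1 (reduction to the boundary).} Under $\stereop^{-1}\circ\ewcover_k$ the Zaremba condition becomes: the trace vanishes on $\arcs{k}{0}$ (the image of $\{z=0\}$), with no condition on $\arcs{k}{1}$. So the relevant form space is
\begin{equation*}
V_j:=\{u \in H^1_{(j;k)}(\Sph^2_-) \st u(0,\cdot)|_{\arcs{k}{0}} = 0\}.
\end{equation*}
I would prove the Zaremba analogue of \ref{index-nullity-conf-comp-equiv} in the same way, using \ref{junction-density} and \ref{puncture-removal} to handle the junction points $\pts{k}$. I would then repeat the block diagonalization \ref{dirichlet_decomp} of \ref{nop-sphop} inside $V_j$. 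This works because $\jext_k$ preserves both the class $j$ and the trace, so $V_j=\jext_k(W_j)\oplus (H^1_0\cap H^1_{(j;k)})$, where
\begin{equation*}
W_j:=\{f\in H^{1/2}_{(j;k)}(\Sph^1)\st f|_{\arcs{k}{0}}=0\}.
\end{equation*}
By \ref{hemi_first_D_eval} the zero-trace block is positive definite. Hence $\ind_{\DN,j}$ and $\nul_{\DN,j}$ equal the index and nullity of $f\mapsto\langle\nop_k f,f\rangle$ on $W_j$.

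\emph{Step 2 (single-arc form).} A function $f\in W_j$ is determined by its restriction to one arc of $\arcs{k}{1}$. I write $f(\phi)=e^{-ij\phi}h(k\phi)$ with $h\in H^{1/2}(\Sph^1)$. Then $\widehat f(kn-j)=\widehat h(n)$, and $h$ is supported in a fixed closed half circle $I$. This gives
\begin{equation*}
\langle \nop_k f,f\rangle = 2\pi\sum_{n\in\Z} a_n\abs{\widehat h(n)}^2,\qquad a_n:=\frac{(kn-j)^2-(k-1)^2}{\abs{kn-j}}.
\end{equation*}
The only negative coefficients are $a_0$ and $a_1$, and $a_1<0$ only when $j\geq 2$; this matches the Neumann indices $1,2,1$ of \ref{D_and_N_half_period}.

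\emph{Lower bound, all $j$.} I would exhibit one explicit $h$ supported in $I$ with $\sum a_n\abs{\widehat h(n)}^2<0$. My first choice is $h(\theta)=\sin\theta$ on $I=[0,\pi]$, extended by zero; it lies in $H^{1/2}$ because it vanishes at the endpoints. Its Fourier coefficients are explicit rational functions of $n$. The series can be bounded above by keeping the exact terms for small $\abs n$ and estimating the tail by a sum such as $\sum \abs n^{-3}$, uniformly in $k\ge3$ and $1\le j\le k-1$. If this fails for some $j$, I would instead use $h$ modulated by $e^{i\theta/2}$ so as to balance its mass between $n=0$ and $n=1$. This yields $\ind_{\DN,j}\geq1$. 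For $j\in\{1,k-1\}$, combining it with \ref{z-upper} immediately gives $\ind=1$ and $\nul=0$.

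\emph{Upper bound for $2\leq j\leq k-2$ (main obstacle).} Here \ref{z-upper} only gives $\ind+\nul\le2$, so I must show the form is strictly positive on a codimension-one subspace of $\{h\in H^{1/2}\st \supp h\subseteq I\}$. A function supported on a half circle cannot concentrate its $\ell^2$ mass on the two frequencies $\{0,1\}$. The plan is to quantify this. Using $a_n\geq c_k\abs{n-\tfrac12}$-type lower bounds for $n\notin\{0,1\}$, it suffices to show that on the subspace where a suitable combination, say $\widehat h(0)-\widehat h(1)$, vanishes, the tail dominates:
\begin{equation*}
\sum_{n\ne0,1}a_n\abs{\widehat h(n)}^2 > -a_0\abs{\widehat h(0)}^2-a_1\abs{\widehat h(1)}^2 .
\end{equation*}
To prove this I would use the support condition in the form $h=\chi_I h$. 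This gives the identity $\widehat h(m)=\sum_n \widehat{\chi_I}(m-n)\widehat h(n)$, with $\widehat{\chi_I}$ explicit ($\tfrac12$ at $0$, $O(1/\abs m)$ at odd $m$, zero at even $m\neq0$). From it I would bound $\abs{\widehat h(0)}^2+\abs{\widehat h(1)}^2$ on that subspace by a constant times the weighted tail sum, and then check that the constant beats $\max(-a_0,-a_1)/\min_{n\ne0,1}a_n$ uniformly in $k$ and $j$.

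The delicate part is choosing the constrained direction and obtaining a sharp enough constant. I expect this Fourier estimate to be the heart of the argument. Strictness of the resulting inequality gives index $\le1$ together with nullity $0$ on this subspace, so with the lower bound $\ind_{\DN,j}=1$ and $\nul_{\DN,j}=0$.
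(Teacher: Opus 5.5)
Your Steps 1 and 2 are correct and follow the paper's route: Lemma \ref{DN-form-equiv} (with the Zaremba versions of the density and puncture-removal lemmas, and reflection to $\ks_{k,2}$ rather than $\ks_{k,1}$ for the nullity), Proposition \ref{z-reduction}, and Lemma \ref{single-arc_reduction}. In this dictionary your $h$ is $e^{i\theta}f_1$, your $n$ is the paper's $\ell+1$, and $h$ is supported on $I=\{\abs{\theta}\le\pi/2\}$. The gaps are in the two estimates.

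\emph{Lower bound.} Your lower bound fails at $k=3$. That is exactly the case where it carries the whole proof, since there $\{1,2\}=\{1,k-1\}$. For $k=3$, $j=1$ one has $a_0=-3$, $a_1=0$, $a_{-1}=3$, $a_2=\tfrac{21}{5}$, $a_{-2}=\tfrac{45}{7}$.
\begin{itemize}
\item Your $\sin\theta$ on a half circle has $\abs{\widehat h(0)}^2=\pi^{-2}$, $\abs{\widehat h(\pm1)}^2=\tfrac1{16}$ and $\abs{\widehat h(\pm2)}^2=(9\pi^2)^{-1}$. Already $\tfrac{3}{16}+(\tfrac{21}{5}+\tfrac{45}{7})(9\pi^2)^{-1}\approx0.307>3\pi^{-2}\approx0.304$, so the form is positive.
\item Your fallback $\cos\theta\,e^{i\theta/2}$ on $I$ has $\abs{\widehat h(n)}^2=8\pi^{-2}(4-(2n-1)^2)^{-2}$. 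Its terms with $-5\le n\le6$, $n\ne0$, already sum to about $0.2727$, which exceeds $3\abs{\widehat h(0)}^2=8/(3\pi^2)\approx0.2702$.
\item The case $j=2$ is the mirror image.
\end{itemize}

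The fallback does work for $k\ge4$. Pair modes as in Lemma \ref{test-field} and use the telescoping identity $\sum_n\abs{n-1+a}\,\abs{\widehat h(n)}^2=2/\pi^2$. The modes $n=0,1$ then give
\[
\sum_n a_n\abs{\widehat h(n)}^2<\tfrac{2k}{\pi^2}\Big(1-\tfrac{4(k-1)^2}{9j(k-j)}\Big)\le0 .
\]
At $k=3$ this bound is $\tfrac{2}{3\pi^2}>0$, and the remaining modes do not compensate.

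The fix is to keep the modulation but change the profile. Take $h=\sqrt{\cos\theta}\,e^{i\theta/2}$, which is the paper's $v$ of \ref{test-field-def} after your index shift. It has $\abs{\widehat h(0)}^2=\abs{\widehat h(1)}^2=\tfrac18$, and $\sum_n\abs{n-1+a}\,\abs{\widehat h(n)}^2=\tfrac14$ for every $a\in(0,1)$ (Lemma \ref{test-field}, via Catalan numbers). This makes the two-mode bound of \ref{lower-bound} vanish exactly at $k=3$, so the strictly negative contribution of the other modes decides the sign.

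\emph{Upper bound.} For $2\le j\le k-2$ you do not supply the estimate you yourself call the heart of the argument, and the constraint you name is the wrong one. Your convolution identity, with $\widehat{\chi_I}(0)=\tfrac12$ and $\widehat{\chi_I}(\pm1)=\tfrac1\pi$, gives
\[
(\tfrac12+\tfrac1\pi)(\widehat h(0)-\widehat h(1))=T_0-T_1,\qquad (\tfrac12-\tfrac1\pi)(\widehat h(0)+\widehat h(1))=T_0+T_1 .
\]
Here $T_0\pm T_1$ pair the tail $(\widehat h(n))_{n\ne0,1}$ against coefficients of equal moduli. So the support condition controls the difference well and the sum poorly. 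On $\{\widehat h(0)=\widehat h(1)\}$ you gain nothing. Indeed $\cos\theta\,e^{i\theta/2}$ lies in that subspace and has $\sum_n a_n\abs{\widehat h(n)}^2\approx-0.10$ for $k=4$, $j=2$.

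What works is the paper's argument (Lemma \ref{Q-lower} and the proposition after it):
\begin{itemize}
\item Write $\widehat h(0)-\widehat h(1)=\sum_{n\ne0,1}\cc{c_n}\,\widehat h(n)$. Up to sign and your index shift, this is the paper's $\Psi$ with $\widehat\Psi(0)=\widehat\Psi(-1)=0$.
\item Apply Cauchy--Schwarz with weights $a_n$.
\item Restrict to the codimension-one subspace $\{a_0\widehat h(0)+a_1\widehat h(1)=0\}$. There the ratio $\abs{\widehat h(0)-\widehat h(1)}^2/(\abs{a_0}\abs{\widehat h(0)}^2+\abs{a_1}\abs{\widehat h(1)}^2)$ attains its maximum $\abs{a_0}^{-1}+\abs{a_1}^{-1}$.
\end{itemize}
It then remains to prove $\sum_{n\ne0,1}\abs{c_n}^2/a_n<\abs{a_0}^{-1}+\abs{a_1}^{-1}$, uniformly in $k\ge4$ and $2\le j\le k-2$. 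The paper does this by bounding the symbol below by its $k=\infty$ limit, treating the two nearest tail modes exactly (the rational functions $H$ and $R$ of $a(1-a)$), and summing the rest against $(\abs{\ell}-1)^{-3}$.
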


\subsection*{Reduction to the circle}

We define
\begin{equation}
\begin{aligned}
  H^{1/2}_{\DN;k}(\Sph^1)
  &:=
  \{
    f \in H^{1/2}(\Sph^1)
    \st
    \supp f
    \subseteq
    \closure({\arcs{k}{1}})
  \}
  \\
  &=
  \{
    f \in H^{1/2}(\Sph^1)
    \st
    \norm{f|_{\arcs{k}{0}}}_{L^2(\Sph^1)} = 0
  \}
\end{aligned}
\end{equation}
and, for each $j \in \Z \cap [0,k-1]$, its invariant subspaces
\begin{equation}
  H^{1/2}_{\DN, (j;k)}(\Sph^1)
  :=
  H^{1/2}_{\DN;k}(\Sph^1)
  \cap
  H^{1/2}_{(j;k)}(\Sph^1),
\end{equation}
as well as the closed subspace of $H^1(\Sph^2_-)$
\begin{equation}
  H^1_{\DN, (j;k)}(\Sph^2_-)
  :=
  \{
    u \in H^1_{(j;k)}(\Sph^2_-)
    \st
    u(0,\cdot) \in H^{1/2}_{\DN;k}(\Sph^1)
  \}.
\end{equation}
Note that for $1 \leq j \leq k-1$ we have
$H^{1/2}_{\DN, (j;k)}(\Sph^1) \subset \dot{H}^{1/2}(\Sph^1)$.

\begin{lemma}
\label{DN-density}
For integers $k \geq 3$ and $0 \leq j \leq k-1$
the functions in $C^\infty(\closure(\Sph^2_-))$
which are of Fourier class $j$,
vanish on a neighborhood of $\{\stereop^{2k} = -1\}$,
and whose trace $u(0,\cdot)$ vanishes on $\arcs{k}{0}$
are dense in $H^1_{\DN,(j;k)}(\Sph^2_-)$.
\end{lemma}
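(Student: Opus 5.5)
The density will follow the pattern of \ref{junction-density}, with an extra step that pushes the support of the trace strictly away from the closed arcs $\closure(\arcs{k}{0})$ while keeping the Fourier class. Fix $u \in H^1_{\DN,(j;k)}(\Sph^2_-)$ and $\epsilon>0$. Each step below is a linear or Lipschitz operation that preserves the class $j$ (it commutes with $\rot_z^{2\pi/k}$ or is applied $\rot_z^{2\pi/k}$-symmetrically) and preserves vanishing of the trace on $\arcs{k}{0}$.

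\begin{enumerate}
\item \emph{Truncation.} Truncate the real and imaginary parts at level $\pm M$. This commutes with rotations, preserves the zero set of the trace, and converges in $H^1$ as $M\to\infty$.

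\item \emph{Logarithmic cutoff at the punctures.} Multiply by a $\rot_z^{2\pi/k}$-invariant cutoff $\chi_\delta$. It equals $0$ near the $2k$ points $\{\stereop^{2k}=-1\}$ and equals $1$ outside small discs, with the standard logarithmic profile, so that $\norm{d\chi_\delta}_{L^2}\to 0$. Boundedness of the truncated function gives $\chi_\delta u\to u$ in $H^1$. The trace still vanishes on $\arcs{k}{0}$, and now also near $\pts{k}$.

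\item \emph{Shrinking the support of the trace.} The trace of the current function now vanishes on a neighborhood of $\closure(\arcs{k}{0})$ near the junction points. Away from the junctions the boundary arcs of $\arcs{k}{0}$ are smooth, so locally the domain is a half disc whose flat edge is split by nothing. There I pass to the odd extension across the equatorial arcs $\arcs{k}{0}$ (the function has zero trace there), which gives an $H^1$ function on a two-sided neighborhood.

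\item \emph{Mollification.} Mollify symmetrically and restrict back. Concretely, in the $(t,\theta)$ coordinates I take the odd reflection $t\mapsto -t$ on the $\arcs{k}{0}$ portion and the even reflection on the $\arcs{k}{1}$ portion; these are compatible because near the junctions the function already vanishes. Then I mollify in $(t,\theta)$ with a kernel that is even in $t$ and rotation-invariant in $\theta$, so the class is preserved. Odd parts stay odd, so the trace on $\arcs{k}{0}$ stays zero. Away from the equator, ordinary mollification (in the interior of $\Sph^2_-$, plus the smooth pole region) suffices, patched with a rotation-invariant partition of unity depending only on $t$.
\end{enumerate}

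The main obstacle is step 4. A single mollification does not respect both parity prescriptions unless the function vanishes near the junction points, where the odd and even reflections meet. That is exactly why the cutoff of step 2 must precede it. The cutoff must also be smooth enough, and the mollification radius smaller than the distance to the cut-off region, for the zero set near $\pts{k}$ to persist. Checking these orderings is the only delicate point. Everything else is the standard $H^1$ convergence of truncation, cutoff and mollification.
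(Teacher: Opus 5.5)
Your route is the paper's: truncate, multiply by a $\rot_z^{2\pi/k}$-invariant logarithmic cutoff at the $2k$ junction points, and then invoke the standard local density of smooth functions with vanishing trace on $\arcs{k}{0}$ away from those points. Your odd/even reflection and mollification simply makes that last step explicit. The ordering you identify is correct: cutoff before mollification, with the mollification radius smaller than the size of the vanishing region.

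One step fails as written. Truncating the real and imaginary parts separately does \emph{not} preserve the Fourier class $j$. For a nonlinear pointwise operation $u\mapsto F\circ u$, commuting with the rotations is not enough. Class $j$ also requires $F(e^{2\pi i j/k}z)=e^{2\pi i j/k}F(z)$. For example, take $k=3$, $j=1$, at a point where $u=2M$. The rotated value is $2Me^{2\pi i/3}=-M+i\sqrt{3}M$, and its componentwise truncation is $-M+iM$, not $e^{2\pi i/3}M$.

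The fix is immediate: truncate the modulus instead, $u\mapsto u\min\{1,M/\abs{u}\}$. This map is $1$-Lipschitz and commutes with multiplication by unimodular constants. It also preserves vanishing of the trace and converges to $u$ in $H^1$ as $M\to\infty$.

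Alternatively, drop the equivariance requirement from every step and apply at the end the averaging projector $P_jv:=\frac1k\sum_{n=0}^{k-1}e^{-2\pi ijn/k}\,v\circ\rot_z^{-2\pi n/k}$. It is bounded on $H^1(\Sph^2_-)$ and fixes $u$. It also preserves smoothness up to the boundary, vanishing near the invariant set $\{\stereop^{2k}=-1\}$, and vanishing of the trace on the invariant set $\arcs{k}{0}$. This also spares you from having to make the mollification near the pole equivariant.
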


\begin{proof}
Given $u \in H^1_{\DN,(j;k)}(\Sph^2_-)$,
truncate its value to reduce to $u$ bounded,
the truncation preserving both the class $j$
and the vanishing of the trace on $\arcs{k}{0}$.
Let $\chi_\varepsilon$ be the $\rot_z^{2\pi/k}$-invariant
logarithmic cutoff of \ref{junction-density},
equal to $0$ near $\{\stereop^{2k}=-1\}$ and to $1$ away from it.
Being a bounded real multiplier,
$\chi_\varepsilon$ preserves the vanishing of the trace on $\arcs{k}{0}$,
so $\chi_\varepsilon u \in H^1_{\DN,(j;k)}(\Sph^2_-)$;
and, $u$ being bounded, $\chi_\varepsilon u \to u$ in $H^1(\Sph^2_-)$.
It therefore suffices to approximate each $\chi_\varepsilon u$,
which vanishes near $\{\stereop^{2k}=-1\}$
and has trace supported in a compact subset of $\arcs{k}{1}$,
by functions of the stated form;
since away from the junction points
the Dirichlet and Neumann arcs are disjoint,
this is the standard density of smooth functions
with vanishing trace on $\arcs{k}{0}$,
carried out $\rot_z^{2\pi/k}$-equivariantly.
\end{proof}

\begin{lemma}
\label{DN-puncture-removal}
For integers $k \geq 3$ and $0 \leq j \leq k-1$,
if $u \in H^1_{\DN,(j;k)}(\Sph^2_-)$ satisfies $\bop_k(u,v) = 0$
for every $v \in H^1_{\DN,(j;k)}(\Sph^2_-)$
vanishing on a neighborhood of $\{\stereop^{2k} = -1\}$,
then $\bop_k(u,v) = 0$ for every $v \in H^1_{\DN,(j;k)}(\Sph^2_-)$.
\end{lemma}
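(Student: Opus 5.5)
This is the Zaremba analogue of \ref{puncture-removal}, and I would prove it the same way: by continuity plus density. The only input that changes is the density result, where \ref{DN-density} replaces \ref{junction-density}.

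First I check continuity. The potential $V^{\Sph^2}_k$ takes values in $[0,2(k-1)^2]$, so it is bounded, and $u \in H^1(\Sph^2_-)$. Hence the conjugate-linear functional $v \mapsto \bop_k(u,v)$ satisfies
\begin{equation*}
  \abs{\bop_k(u,v)}
  \leq
  \norm{du}_{L^2(\Sph^2_-)} \norm{dv}_{L^2(\Sph^2_-)}
  + 2(k-1)^2 \norm{u}_{L^2(\Sph^2_-)} \norm{v}_{L^2(\Sph^2_-)},
\end{equation*}
so it is continuous on $H^1(\Sph^2_-)$. In particular it is continuous on the closed subspace $H^1_{\DN,(j;k)}(\Sph^2_-)$.

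Next I identify a dense set on which the functional vanishes. By hypothesis it vanishes on every $v \in H^1_{\DN,(j;k)}(\Sph^2_-)$ that vanishes near $\{\stereop^{2k}=-1\}$. Consider the functions supplied by \ref{DN-density}: smooth on the closed hemisphere, of class $j$, vanishing near the junction points, with trace vanishing on $\arcs{k}{0}$. Their traces lie in $H^{1/2}$ and are supported in $\closure(\arcs{k}{1})$, so these functions belong to $H^1_{\DN,(j;k)}(\Sph^2_-)$ and also vanish near $\{\stereop^{2k}=-1\}$. They therefore form a subset of the hypothesis class, and by \ref{DN-density} this subset is dense in $H^1_{\DN,(j;k)}(\Sph^2_-)$.

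Finally, given any $v \in H^1_{\DN,(j;k)}(\Sph^2_-)$, I choose such $v_n \to v$ in $H^1$. Then $\bop_k(u,v)=\lim_n \bop_k(u,v_n)=0$.

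There is no real obstacle; all the substance is in \ref{DN-density}, which I am allowed to assume. The one point to verify is that the approximants from \ref{DN-density} really belong to the hypothesis class, namely that membership in $H^1_{\DN,(j;k)}(\Sph^2_-)$ is witnessed by their traces, which was checked above.
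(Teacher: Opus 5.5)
Your proposal is correct and is essentially the paper's own proof: continuity of $\bop_k(u,\cdot)$ on $H^1(\Sph^2_-)$, which follows from the boundedness of $V^{\Sph^2}_k$, combined with the density supplied by \ref{DN-density}. One trivial slip: under the paper's convention that inner products are conjugate-linear in the left argument, $v \mapsto \bop_k(u,v)$ is linear rather than conjugate-linear, which does not affect the argument.
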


\begin{proof}
Since $V^{\Sph^2}_k$ is bounded and $du \in L^2(\Sph^2_-)$,
the functional $\bop_k(u,\cdot)$ is continuous on $H^1(\Sph^2_-)$;
it vanishes by hypothesis on those $v \in H^1_{\DN,(j;k)}(\Sph^2_-)$
supported away from $\{\stereop^{2k}=-1\}$,
which are dense in $H^1_{\DN,(j;k)}(\Sph^2_-)$ by \ref{DN-density}.
\end{proof}

\begin{lemma}
\label{DN-form-equiv}
For any integers $k \geq 3$ and $0 \leq j \leq k-1$
\begin{equation*}
\begin{gathered}
  \begin{aligned}
  \ind_{\DN,j}(\ks_k^{(0,\pi)})
    =
    \sum_{\lambda < 0}
    \dim
    \{
      &u \in H^1_{\DN, (j;k)}(\Sph^2_-)
      \st
      \forall v \in H^1_{\DN, (j;k)}(\Sph^2_-)
      \\
      &\;
      \bop_k(u,v) = \lambda \langle u, v \rangle_{L^2(\Sph^2_-)}
    \},
  \end{aligned}
  \\
  \nul_{\DN,j}(\ks_k^{(0,\pi)})
    =
    \dim
    \{
      u \in H^1_{\DN, (j;k)}(\Sph^2_-)
      \st
      \forall v \in H^1_{\DN, (j;k)}(\Sph^2_-)
      \;\,
      \bop_k(u,v) = 0
    \}.
\end{gathered}
\end{equation*}
\end{lemma}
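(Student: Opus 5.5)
The plan is to follow the proof of \ref{index-nullity-conf-comp-equiv} closely, with the Zaremba space $H^1_{\DN,(j;k)}(\Sph^2_-)$ replacing the Dirichlet and Neumann spaces.

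First I transfer the problem to the hemisphere. The map $\stereop^{-1}\circ\ewcover_k$ restricts to a conformal diffeomorphism of $\ks_k^{(0,\pi)}$ onto $\Sph^2_-$. Because the Jacobi form is conformally invariant, it pulls back to $\bop_k$. The map also sends the boundary pieces $\{z=0\}\cap\ks_k$ and $\{z=\pi\}\cap\ks_k$ to $e^{i\arcs{k}{0}}$ and $e^{i\arcs{k}{1}}$ on the equator, by \ref{ewcover}(iii). So the Dirichlet condition on $\{z=0\}$ becomes vanishing of the trace on $\arcs{k}{0}$, and the Neumann condition on $\{z=\pi\}$ imposes nothing. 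The admissible variations for $\ind_{\DN,j}$ are the class-$j$ functions in $C_c^\infty$ of $\ks_k^{(0,\pi)}\cup(\{z=\pi\}\cap\ks_k)$. These push forward to the dense class of \ref{DN-density}.

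Next I treat the index. Since $V_k^{\Sph^2}$ is bounded, $\bop_k$ is bounded below relative to the $L^2$ norm on the closed subspace $H^1_{\DN,(j;k)}(\Sph^2_-)$. Together with the compact embedding $H^1\hookrightarrow L^2$, this gives a self-adjoint realization with discrete spectrum. By min-max, the number of negative eigenvalues equals the maximal dimension of a subspace of $H^1_{\DN,(j;k)}(\Sph^2_-)$ on which $\bop_k$ is negative definite. A negative definite space of admissible test functions on $\ks_k^{(0,\pi)}$ pushes forward to such a subspace, which gives $\le$. For $\ge$, take the span of the negative eigenfunctions. It is finite dimensional, and negativity is an open condition in the $H^1$ topology on that finite-dimensional space. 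So approximating a basis by the functions of \ref{DN-density} gives a negative definite space of the same dimension. Those functions pull back to admissible compactly supported class-$j$ test functions on the half period; they avoid the ends because they vanish near $\{\stereop^{2k}=-1\}$.

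Then I treat the nullity. Let $u$ be a bounded class-$j$ Jacobi field on $\ks_k^{[0,\pi]}$ that vanishes on $\{z=0\}$ and has zero normal derivative on $\{z=\pi\}$. Reflect it oddly across $\{z=0\}$ and evenly across $\{z=\pi\}$. This produces a bounded Jacobi field on $\ks_{k,2}$, which has finite Dirichlet energy by \ref{jacobi_finiteness}. Hence its pushforward lies in $H^1_{\DN,(j;k)}(\Sph^2_-)$. Integrating by parts against $v$ supported away from the junction points, the boundary term is $\int\bar v\,\partial_tu$. It vanishes on $\arcs{k}{0}$ because $v$ vanishes there, and on $\arcs{k}{1}$ because $\partial_tu$ vanishes there. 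Then \ref{DN-puncture-removal} gives $\bop_k(u,\cdot)=0$ on the whole space.

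Conversely, let $u$ be in the null space of $\bop_k$ on $H^1_{\DN,(j;k)}(\Sph^2_-)$. Interior regularity makes $u$ smooth in the open hemisphere. Testing against $v$ localized near interior points of $\arcs{k}{1}$ shows that the weak Neumann data vanishes there. Reflecting oddly across $\arcs{k}{0}$ and evenly across $\arcs{k}{1}$ then gives smoothness up to the open arcs with the correct boundary conditions. Pulling back and reflecting as above yields a Jacobi field on $\ks_{k,2}$, smooth away from the junctions. Finite energy follows from $u\in H^1$ and conformal invariance of the Dirichlet energy. Removability of the isolated singularities at the junction points should follow as in \ref{jacobi_finiteness}, after which that lemma gives boundedness.

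The main obstacle is this last step: near a junction point the reflected function lives on a surface where the Dirichlet and Neumann arcs meet. One has to check that the reflected function is really a weak Jacobi field on a punctured disc neighborhood of the end in $\ks_{k,2}$. I expect this to follow because, after the two reflections, the end of $\ks_{k,2}$ is a $2$-fold cover of the punctured disc in the $w$-plane. In the coordinate $\zeta$ with $\ewcover_k=w_1+\zeta^2$, the reflections glue four quarter-discs into a punctured disc on which the equation has smooth potential. Since $u$ has finite energy there, the puncture is removable, and \ref{jacobi_finiteness} gives boundedness.
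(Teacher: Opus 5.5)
Your proposal is correct and follows essentially the same route as the paper. The paper likewise reruns the proof of \ref{index-nullity-conf-comp-equiv} on the single space $H^1_{\DN,(j;k)}(\Sph^2_-)$ with \ref{DN-density} and \ref{DN-puncture-removal}, and for the nullity extends oddly across $\{z=0\}$ and evenly across $\{z=\pi\}$ to a Jacobi field on $\ks_{k,2}$ so as to apply \ref{jacobi_finiteness}. The step you flag as the main obstacle is in fact already handled by \ref{jacobi_finiteness}: the junction points are ends, not points of $\ks_{k,2}$, so the reflected field is a smooth Jacobi field on $\ks_{k,2}$, and that lemma's proof is precisely your $\zeta$-coordinate removability argument with $m=2$.
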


\begin{proof}
The proof is that of \ref{index-nullity-conf-comp-equiv},
with the single space $H^1_{\DN,(j;k)}(\Sph^2_-)$
(whose $\bop_k$-index and nullity are the two right-hand sides)
in place of the Dirichlet and Neumann spaces,
and \ref{DN-density}, \ref{DN-puncture-removal}
in place of \ref{junction-density}, \ref{puncture-removal}.
Additionally, for the nullity,
since the Zaremba condition is odd across $\{z=0\}$
and even across $\{z=\pi\}$,
we now extend to a Jacobi field on $\ks_{k,2}$
(anti-invariant under $\trans_z^{2\pi}$)
rather than $\ks_{k,1}$,
in order to apply \ref{jacobi_finiteness}.
\end{proof}

Next we define, for $1 \leq j \leq k-1$,
\begin{equation}
\begin{gathered}
  \ind_{\DN,j}(\nop_k)
  :=
  \sup
  \{
    \dim X
    \st
    X \leq H^{1/2}_{\DN,(j;k)}(\Sph^1)
    \mbox{ and }
    \forall u \in X \setminus \{0\}
    \;\,
    \langle \nop_k u, \, u \rangle_{L^2(\Sph^1)} < 0
  \},
  \\
  \nul_{\DN,j}(\nop_k)
  :=
  \dim
  \{
    u \in H^{1/2}_{\DN,(j;k)}(\Sph^1)
    \st
    \forall v \in H^{1/2}_{\DN,(j;k)}(\Sph^1)
    \;\,
    \langle \nop_k u, \, v \rangle_{L^2(\Sph^1)} = 0
  \}.
\end{gathered}
\end{equation}

\begin{proposition}
\label{z-reduction}
For any integers $k \geq 3$ and $1 \leq j \leq k-1$
\begin{equation*}
  \ind_{\DN,j}(\ks_k^{(0,\pi)})
  =
  \ind_{\DN,j}(\nop_k)
  \quad \mbox{and} \quad
  \nul_{\DN,j}(\ks_k^{(0,\pi)})
  =
  \nul_{\DN,j}(\nop_k).
\end{equation*}
\end{proposition}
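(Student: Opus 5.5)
By \ref{DN-form-equiv}, $\ind_{\DN,j}(\ks_k^{(0,\pi)})$ and $\nul_{\DN,j}(\ks_k^{(0,\pi)})$ are the index and nullity of the Hermitian form $\bop_k$ restricted to $H^1_{\DN,(j;k)}(\Sph^2_-)$. It therefore suffices to redo the block-diagonalization in the proof of \ref{nop-sphop} inside this smaller space, with $H^{1/2}_{(j;k)}(\Sph^1)$ replaced by $H^{1/2}_{\DN,(j;k)}(\Sph^1)$.

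The first step is a direct sum decomposition
\begin{equation*}
  H^1_{\DN,(j;k)}(\Sph^2_-)
  =
  \jext_k\bigl(H^{1/2}_{\DN,(j;k)}(\Sph^1)\bigr)
  \oplus
  \bigl(H^1_0(\Sph^2_-) \cap H^1_{(j;k)}(\Sph^2_-)\bigr),
  \qquad
  u = \jext_k u(0,\cdot) + \bigl(u - \jext_k u(0,\cdot)\bigr).
\end{equation*}
We check the ingredients in order. For $j \neq 0$ the trace $f := u(0,\cdot)$ lies in $H^{1/2}_{\DN,(j;k)}(\Sph^1) \subset \dot H^{1/2}(\Sph^1)$, so $\jext_k f$ is defined. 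Next, $\jext_k f$ has class $j$ by the uniqueness in \ref{DN-form-equiv}'s underlying \ref{DtN}. Its trace is $f$, which vanishes on $\arcs{k}{0}$, so $\jext_k f \in H^1_{\DN,(j;k)}(\Sph^2_-)$. The remainder has zero trace, so it lies both in $H^1_0$ and in the Zaremba space. Directness holds because an element of the intersection is a zero-trace solution; by the uniqueness argument of \ref{DtN} it is a multiple of $\tanh(k-1)t$, which is $\Sph^1$-invariant and hence zero in class $j \neq 0$.

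The second step is to verify that this splitting is $\bop_k$-orthogonal and to identify the first block. Take $f \in H^{1/2}_{\DN,(j;k)}(\Sph^1)$ and any $v \in H^1_{\DN,(j;k)}(\Sph^2_-)$ with trace $v(0,\cdot)$. Then \ref{DtN}(iv) gives
\begin{equation*}
  \bop_k(\jext_k f, v) = \langle \nop_k f, \, v(0,\cdot) \rangle_{L^2(\Sph^1)}.
\end{equation*}
If $v$ has zero trace, this vanishes, so the two summands are $\bop_k$-orthogonal. Taking $v = \jext_k g$ shows that the first block is exactly the form $(f,g) \mapsto \langle \nop_k f, g \rangle_{L^2(\Sph^1)}$ on $H^{1/2}_{\DN,(j;k)}(\Sph^1)$.

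The second block is $\bop_k$ on $H^1_0 \cap H^1_{(j;k)}$. By \ref{hemi_first_D_eval}, the least Dirichlet eigenvalue is zero with an $\Sph^1$-invariant eigenspace, so on class $j \neq 0$ the Dirichlet eigenvalues are strictly positive. Min-max then shows this block is positive definite, indeed coercive in $L^2$.

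Finally, we pass from the block structure to index and nullity. For a Hermitian form that splits orthogonally into a positive definite block and a second block, the index is that of the second block. The nullity is also that of the second block: if $u = a + b$ is null, pairing with zero-trace $v$ forces $\bop_k(b,\cdot) = 0$ on the positive block, hence $b = 0$. Conversely, a null vector $f$ of $\nop_k$ on $H^{1/2}_{\DN,(j;k)}(\Sph^1)$ gives $\jext_k f$ null for $\bop_k$ on the whole space, by the displayed identity. Index is similarly preserved, since negative subspaces transfer through the injective map $\jext_k$, with $\bop_k(\jext_k f, \jext_k f) = \langle \nop_k f, f\rangle$. Conversely, a negative subspace for $\bop_k$ projects injectively onto the first summand, and the first-block form stays negative there because the discarded part contributes nonnegatively.

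The only genuinely delicate point is confirming that $\jext_k f$ lies in the Zaremba space and that \ref{DtN}(iv) applies to all Zaremba test functions; both follow from \ref{DtN} as stated, since (iv) holds for arbitrary $U \in H^1(\Sph^2_-)$. No puncture issues arise, because these were already handled in \ref{DN-form-equiv}.
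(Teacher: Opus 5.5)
Your proposal is correct and follows the paper's own argument. Like the paper, you restrict the decomposition from the proof of \ref{nop-sphop} to $H^1_{\DN,(j;k)}(\Sph^2_-)$, note that $\jext_k$ maps $H^{1/2}_{\DN,(j;k)}(\Sph^1)$ into that space, get block-diagonality from \ref{DtN}(iv), and use strict positivity of the class-$j$ Dirichlet block. The only differences are minor: you cite \ref{hemi_first_D_eval} directly where the paper cites \ref{D_and_N_half_period}, which is derived from it, and your explicit transfer of negative and null subspaces spells out what the paper summarizes as ``follow as before.''
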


\begin{proof}
By \ref{DN-form-equiv}
it suffices to compare
the index and nullity of $\bop_k$
on $H^1_{\DN,(j;k)}(\Sph^2_-)$
with those of
$
  \langle \nop_k \cdot, \, \cdot \rangle_{L^2(\Sph^1)}
$
on $H^{1/2}_{\DN,(j;k)}(\Sph^1)$,
and to this end
the proof of \ref{nop-sphop} goes through
with the following substitutions.
The trace space $H^{1/2}_{(j;k)}(\Sph^1)$ there
is here replaced by
$H^{1/2}_{\DN,(j;k)}(\Sph^1) \subset \dot{H}^{1/2}(\Sph^1)$,
and $H^1_{(j;k)}(\Sph^2_-)$ there
is here replaced by
$H^1_{\DN,(j;k)}(\Sph^2_-)$.
Since membership of the trace in
$H^{1/2}_{\DN,(j;k)}(\Sph^1)$
is precisely the constraint defining
$H^1_{\DN,(j;k)}(\Sph^2_-)$,
the extension operator
$\jext_k$ maps $H^{1/2}_{\DN,(j;k)}(\Sph^1)$
into
$H^1_{\DN,(j;k)}(\Sph^2_-)$,
and the decomposition \ref{dirichlet_decomp}
restricts to
$
  H^1_{\DN,(j;k)}(\Sph^2_-)
  =
  \jext_k H^{1/2}_{\DN,(j;k)}(\Sph^1)
    \oplus
    H^1_{0,(j;k)}(\Sph^2_-)
$,
with $H^1_{0,(j;k)}(\Sph^2_-)$
the class-$j$ zero-trace subspace,
under which $\bop_k$ is here too block diagonal
by \ref{first_block}.
Finally, $\bop_k$ is also positive definite
on $H^1_{0,(j;k)}(\Sph^2_-)$,
since by \ref{index-nullity-conf-comp-equiv}
and \ref{D_and_N_half_period}
(here using $1 \leq j \leq k-1$)
every class-$j$ Dirichlet eigenvalue
of $-\sphop_k$ on $\Sph^2_-$
is strictly positive.
The two claimed equalities follow as before.
\end{proof}

\subsection*{Reduction to a single arc}

Note first that the definition
of $H^{1/2}_{\DN;k}(\Sph^1)$
also makes sense verbatim for $k=1$:
\begin{equation}
  H^{1/2}_{\DN;1}(\Sph^1)
  :=
  \{
    f \in H^{1/2}(\Sph^1)
    \st
    \supp f \subseteq \closure(\arcs{1}{1})
  \},
\end{equation}
where $\arcs{1}{1} := \{\theta \st \cos \theta > 0\}$
is a single arc of length $\pi$.
Let $a \in (0,1)$.
We define the operator
\begin{equation}
\label{nop-ak}
\begin{gathered}
  \nop_{(a;k)} : H^{1/2}(\Sph^1) \to H^{-1/2}(\Sph^1)
  \\
  \widehat{\nop_{(a;k)} v}(\ell)
    :=
    \frac{(\ell+a)^2 - \left(1-\tfrac{1}{k}\right)^2}{\abs{\ell+a}}
    \, \widehat{v}(\ell),
\end{gathered}
\end{equation}
which is well-defined and bounded
(since $a \notin \Z$).
We also define
\begin{equation}
  \ind_{\DN}(\nop_{(a;k)}),
  \quad
  \nul_{\DN}(\nop_{(a;k)})
\end{equation}
by replacing,
in the definitions of
$\ind_{\DN,j}(\nop_k)$ and $\nul_{\DN,j}(\nop_k)$,
the space $H^{1/2}_{\DN,(j;k)}(\Sph^1)$
by $H^{1/2}_{\DN;1}(\Sph^1)$
and the operator $\nop_k$ by $\nop_{(a;k)}$.

\begin{lemma}
\label{single-arc_reduction}
For any integers $k \geq 3$ and $1 \leq j \leq k-1$,
with $a := 1 - \tfrac{j}{k} \in (0,1)$,
\begin{equation*}
  \ind_{\DN,j}(\nop_k)
  =
  \ind_{\DN}(\nop_{(a;k)})
  \quad \mbox{and} \quad
  \nul_{\DN,j}(\nop_k)
  =
  \nul_{\DN}(\nop_{(a;k)}).
\end{equation*}
\end{lemma}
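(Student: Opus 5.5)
The plan is to exhibit an explicit unitary-type isomorphism between $H^{1/2}_{\DN,(j;k)}(\Sph^1)$ and $H^{1/2}_{\DN;1}(\Sph^1)$ that intertwines the two Hermitian forms up to a positive constant. Start from the Fourier side. A function $f \in H^{1/2}_{(j;k)}(\Sph^1)$ has $\supp \widehat f \subseteq -j + k\Z$, so we write $\widehat f(-j+k\ell') = c_{\ell'}$. Since $-j + k\ell' = k(\ell' - j/k)$, and $-j/k \equiv a \pmod 1$ with $a = 1 - j/k$, we can reindex by $\ell := \ell' - 1$. Then $-j + k\ell' = k(\ell + a)$, and
\begin{equation*}
  \frac{(k(\ell+a))^2-(k-1)^2}{k\abs{\ell+a}}
  =
  k\,\frac{(\ell+a)^2-(1-\tfrac1k)^2}{\abs{\ell+a}},
\end{equation*}
which is the symbol of $\nop_k$ at frequency $k(\ell+a)$ and $k$ times the symbol of $\nop_{(a;k)}$ at $\ell$. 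This suggests defining $v \in H^{1/2}(\Sph^1)$ by $\widehat v(\ell) := c_{\ell+1}$. In physical space this is
\begin{equation*}
  v(\phi) = e^{-ia\phi} f(\phi/k),
\end{equation*}
where $f(\phi/k)$ is understood on $\phi\in[-\pi,\pi)$, say. The twisted periodicity of $f$ makes $\phi \mapsto e^{-ia\phi}f(\phi/k)$ genuinely $2\pi$-periodic, and one checks that the two sides match on the Fourier side.

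Next compute the forms under this map. Parseval on the $2\pi/k$-equivariant class gives
\begin{equation*}
  \langle \nop_k f, f\rangle_{L^2(\Sph^1)} = 2\pi \sum_{\ell'} \sigma_k(-j+k\ell')\abs{c_{\ell'}}^2,
\end{equation*}
where $\sigma_k$ is the symbol of $\nop_k$. Similarly $\langle \nop_{(a;k)} v, v\rangle = 2\pi\sum_\ell \sigma_{(a;k)}(\ell)\abs{c_{\ell+1}}^2$. Hence the first is $k$ times the second, and likewise for the sesquilinear versions. The $H^{1/2}$ norms are comparable because $1+k^2(\ell+a)^2 \asymp 1+\ell^2$. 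So the map $f\mapsto v$ is a bounded isomorphism $H^{1/2}_{(j;k)}(\Sph^1)\to H^{1/2}(\Sph^1)$ which intertwines the forms up to the factor $k>0$. Note $a\notin\Z$, so $\ell+a\neq 0$ and there is no zero mode to worry about.

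The remaining point, which I expect to be the only real obstacle, is that the support constraints correspond. We need $\supp f\subseteq \closure(\arcs{k}{1})$ to hold if and only if $\supp v \subseteq \closure(\arcs{1}{1})$. Here $\arcs{k}{1} = \{\cos k\theta>0\}$, and its component containing $0$ is $(-\pi/(2k),\pi/(2k))$. Under $\phi = k\theta$ with $\theta\in[-\pi/k,\pi/k)$, a fundamental domain for the $\Z_k$ action, this component maps exactly onto $(-\pi/2,\pi/2)=\arcs{1}{1}$. Since $f$ is determined by its restriction to one fundamental domain through the equivariance, and since $\arcs{k}{1}$ is $\rot$-invariant, the vanishing of $f$ on $\arcs{k}{0}$ is equivalent to the vanishing of $f$ on $\arcs{k}{0}\cap[-\pi/k,\pi/k)$. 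That is in turn equivalent to the vanishing of $v$ on $\arcs{1}{0}$, because the multiplier $e^{-ia\phi}$ is nonvanishing.

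One must take care that the support statement is made distributionally on $\Sph^1$. The fundamental-domain cut at $\phi=\pm\pi$ lies in the interior of the Dirichlet arc $\arcs{1}{0}$, where both $f$ and $v$ vanish. So the discontinuity of the local formula at the cut is harmless: near the cut $v$ is locally zero, and $v$ is locally a smooth multiple of a dilate of $f$ elsewhere. Consequently the two spaces of admissible functions correspond bijectively. Index, which is the maximal dimension of a negative-definite subspace, and nullity, which is the radical of the form, are then preserved, because a linear isomorphism scaling the form by $k>0$ preserves both.
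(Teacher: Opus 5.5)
Your proposal is correct and is essentially the paper's proof: the same phase-twisted rescaling $f \mapsto e^{-ia\phi}f(\phi/k)$ (equivalently $\widehat{f_1}(\ell)=\widehat{f}(k(\ell+a))$), the same symbol identity giving $\langle \nop_k u, v\rangle_{L^2(\Sph^1)} = k\,\langle \nop_{(a;k)}u_1, v_1\rangle_{L^2(\Sph^1)}$, and the same matching of the Zaremba support condition through the single arc $\{\abs{\theta}<\tfrac{\pi}{2k}\}$ of $\arcs{k}{1}$. The only difference is bookkeeping: you define the map on all of $H^{1/2}_{(j;k)}(\Sph^1)$ and then check the support constraint, whereas the paper defines it on the constrained space by restricting to that arc and extending by zero, which gives the same map there.
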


\begin{proof}
An element of $H^{1/2}_{\DN,(j;k)}(\Sph^1)$
has Fourier coefficients supported in
$\{k(\ell+a) \st \ell \in \Z\}$
and is determined by its restriction
to the single arc $\{\abs{\theta} < \tfrac{\pi}{2k}\}$
of $\arcs{k}{1}$.
Rescaling this restriction,
we define the map
\begin{equation}
\begin{gathered}
  H^{1/2}_{\DN,(j;k)}(\Sph^1) \longrightarrow H^{1/2}_{\DN;1}(\Sph^1),
  \\
  f(\phi)
  \mapsto
  f_1(\phi) :=
  \begin{cases}
    f(\phi/k) \, e^{-ia\phi} &\mbox{if } \abs{\phi} < \tfrac{\pi}{2}, \\
    0 &\mbox{otherwise}
  \end{cases}
  \\
  \text{(equivalently }
    \widehat{f_1}(\ell) = \widehat{f}(k(\ell+a))
    \text{)},
\end{gathered}
\end{equation}
an isomorphism (bounded linear map with bounded inverse)
between the stated Sobolev spaces
and an $L^2(\Sph^1)$ isometry.
Since for every $\ell \in \Z$
\begin{equation}
  \frac{\left(k(\ell+a)\right)^2 - (k-1)^2}{\abs{k(\ell+a)}}
  =
  k \,
  \frac{(\ell+a)^2 - \left(1-\tfrac{1}{k}\right)^2}{\abs{\ell+a}},
\end{equation}
we have
$
  \langle \nop_k u, \, v \rangle_{L^2(\Sph^1)}
  =
  k \langle \nop_{(a;k)} u_1, \, v_1 \rangle_{L^2(\Sph^1)}
$
(recalling \ref{DtN-momentum}),
implying the claimed equalities.
\end{proof}

\subsection*{Lower bound}

We define 
$h : \R \to \R$,
and test function
$v : \Sph^1 \to \C$
by
\begin{equation}
\label{test-field-def}
\begin{gathered}
  h(x)
  :=
  \begin{cases}
    \sqrt{\cos x} &\mbox{if } \abs{x} < \pi/2
    \\
    0                  &\mbox{otherwise},
  \end{cases}
  \qquad
  v(\theta)
    :=
    h(\underline{\theta})\, e^{-i\underline{\theta}/2},
\end{gathered}
\end{equation}
where $\underline{\theta}$ is the unique representative
of $\theta \in \R / 2\pi\Z$ in $(-\pi,\pi]$.
Recalling that
$\widehat{h} : \R \to \C$ denotes the Fourier transform of $h$
and using the fact that $h(x)=h(-x)$, we observe
\begin{equation}
\label{test-field-shift-parity}
  \widehat{v}(\ell)
  =
  \widehat{h}(\ell + \tfrac{1}{2})
  =
  \widehat{h}(-\ell-\tfrac{1}{2})
  =
  \widehat{v}(-\ell-1).
\end{equation}

\begin{lemma}
\label{test-field}
With $h$ and $v$ as in \ref{test-field-def},
we have
$v \in H^{1/2}_{\DN;1}(\Sph^1)$,
$\abs{\widehat{v}(0)}^2 = \tfrac{1}{8}$,
$\abs{\widehat{v}(1)}^2 = \tfrac{1}{32}$,
and for every $a \in (0,1)$
\begin{equation*}
  \sum_{\ell \in \Z} \abs{\ell+ a} \, \abs{\widehat{v}(\ell)}^2
  =
  \frac{1}{4}
\end{equation*}
(independently of $a$).
\end{lemma}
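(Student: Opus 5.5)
The plan is to make everything explicit through the Fourier transform of $h$, using the classical integral
\begin{equation*}
  \int_{-\pi/2}^{\pi/2} \cos^{\nu} x \; e^{-i\xi x} \, dx
  =
  \frac{\pi \, \Gamma(\nu+1)}
       {2^{\nu} \, \Gamma\!\left(1+\tfrac{\nu+\xi}{2}\right)\Gamma\!\left(1+\tfrac{\nu-\xi}{2}\right)}
  \qquad (\nu > -1).
\end{equation*}
With $\nu = \tfrac12$ this gives
\begin{equation*}
  \widehat{h}(\xi)
  =
  \frac{\Gamma(3/2)}{2\sqrt{2}\,\Gamma\!\left(\tfrac54+\tfrac{\xi}{2}\right)\Gamma\!\left(\tfrac54-\tfrac{\xi}{2}\right)}.
\end{equation*}
By \ref{test-field-shift-parity} we have $\widehat{v}(\ell) = \widehat{h}(\ell+\tfrac12)$. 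Evaluating at $\xi = \tfrac12$ gives $\widehat{v}(0) = \Gamma(3/2)/(2\sqrt2\,\Gamma(3/2)\Gamma(1)) = 1/(2\sqrt2)$, so $\abs{\widehat{v}(0)}^2 = \tfrac18$. Evaluating at $\xi = \tfrac32$ gives $\widehat{v}(1) = \Gamma(3/2)/(2\sqrt2\,\Gamma(2)\Gamma(1/2)) = 1/(4\sqrt2)$, so $\abs{\widehat{v}(1)}^2 = \tfrac1{32}$.

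For membership in $H^{1/2}_{\DN;1}(\Sph^1)$, note first that $v$ is supported in $\closure(\arcs{1}{1}) = [-\tfrac\pi2,\tfrac\pi2]$. Moreover $v$ is continuous on $\Sph^1$, since $h$ vanishes near $\underline\theta = \pi$, so the jump of $e^{-i\underline\theta/2}$ there is invisible. For the Sobolev regularity, Stirling's formula applied to the closed form gives $\abs{\widehat{h}(\xi)} \lesssim (1+\abs{\xi})^{-3/2}$. Hence $\sum_\ell (1+\ell^2)^{1/2}\abs{\widehat v(\ell)}^2$ converges, which places $v$ in $H^{1/2}$ and also justifies all rearrangements of the series below.

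For the $a$-independence of the weighted sum, pair the index $\ell \geq 0$ with $-\ell-1$. The symmetry $\widehat v(\ell) = \widehat v(-\ell-1)$ from \ref{test-field-shift-parity}, together with
\begin{equation*}
  \abs{\ell+a} + \abs{-\ell-1+a} = 2\ell+1 \qquad (0<a<1),
\end{equation*}
gives
\begin{equation*}
  \sum_{\ell\in\Z} \abs{\ell+a}\,\abs{\widehat v(\ell)}^2
  =
  \sum_{\ell \ge 0} (2\ell+1)\,\abs{\widehat h(\ell+\tfrac12)}^2,
\end{equation*}
which visibly does not depend on $a$.

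It remains to show that this last series equals $\tfrac14$; I expect this to be the main obstacle. I would first try direct summation. Write $\xi = \ell + \tfrac12$ and use the functional equation and reflection formula of $\Gamma$ to express $\abs{\widehat h(\ell+\tfrac12)}^2$ as a ratio of Pochhammer symbols in $\ell$. The pairs $\Gamma(\tfrac32+\tfrac\ell2)\Gamma(1-\tfrac\ell2)$ split by the parity of $\ell$, and on each parity class the reflection formula turns them into rational-times-gamma-ratio expressions. The aim is then either to recognize the series as a well-poised hypergeometric sum summable by Dixon's or Gauss's theorem, or to find a telescoping antidifference $T(\ell)$ with $T(\ell)-T(\ell+1) = (2\ell+1)\abs{\widehat h(\ell+\tfrac12)}^2$ and $T(0) = \tfrac14$. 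The first two terms, $\tfrac18 + \tfrac3{32}$, serve as a consistency check.

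As a fallback I would use an analytic route. For $\abs{x} < \tfrac\pi2$,
\begin{equation*}
  h(x)\,e^{-ix/2} = 2^{-1/2}\,e^{-ix}(1+e^{2ix})^{1/2},
\end{equation*}
which is the boundary value of a function holomorphic in the disc. The quantity $\sum\abs{\ell+\tfrac12}\abs{\widehat v(\ell)}^2$ is then the Dirichlet energy of a harmonic extension, and the goal would be to compute it by a contour integral or by Green's identity.
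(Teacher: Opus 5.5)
Your setup is correct and matches the paper. The Cauchy beta-integral closed form for $\widehat h$ is the route the paper mentions as an alternative to its own recurrence. Your values $\widehat v(0)=1/(2\sqrt2)$ and $\widehat v(1)=1/(4\sqrt2)$, the decay $\abs{\widehat h(\xi)}\lesssim(1+\abs{\xi})^{-3/2}$, and the pairing $\ell\leftrightarrow-\ell-1$ reducing the sum to $\sum_{\ell\geq0}(2\ell+1)\abs{\widehat h(\ell+\tfrac12)}^2$ are all right. The gap is that you never evaluate this series as $\tfrac14$, and that evaluation is the real content of the lemma. You list strategies (a Dixon or Gauss summation, an unspecified antidifference $T$, a contour integral) but carry out none of them, so the final identity is unproved. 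The fallback is not yet an argument either. The function $v$ is only the restriction of $2^{-1/2}e^{-i\theta}(1+e^{2i\theta})^{1/2}$ to the arc $\{\cos\theta>0\}$, so it is not itself a holomorphic boundary value. You also do not say which harmonic function's energy the weighted sum would be, or how you would compute it.

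The missing observation is already in your formula.
\begin{itemize}
\item At $\xi=\ell+\tfrac12$ the Gamma arguments are $\tfrac32+\tfrac{\ell}{2}$ and $1-\tfrac{\ell}{2}$. Since $1/\Gamma(1-\tfrac{\ell}{2})=0$ for every even $\ell\geq2$, you get $\widehat v(2q+2)=0$ for all $q\geq0$.
\item For odd $\ell=2q+1$, the identity $\Gamma(\tfrac12-q)=(-4)^q q!\sqrt\pi/(2q)!$ gives $\widehat v(2q+1)=C_q/(4\sqrt2\,(-4)^q)$, where $C_q=\binom{2q}{q}/(q+1)$ are the Catalan numbers.
\item The series is then $\tfrac18+\tfrac1{32}\sum_{q\geq0}(4q+3)\,16^{-q}C_q^2$.
\item Write $4q+3=4(q+1)^2-(2q+1)^2$ and use $(q+1)C_q=\binom{2q}{q}$ and $2(2q+1)C_q=\binom{2q+2}{q+1}$. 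Each summand becomes $4\bigl[16^{-q}\binom{2q}{q}^2-16^{-(q+1)}\binom{2q+2}{q+1}^2\bigr]$.
\item This telescopes to $4$, because $4^{-q}\binom{2q}{q}\to0$. The total is therefore $\tfrac18+\tfrac18=\tfrac14$.
\end{itemize}
This is exactly the paper's argument. The only difference is that the paper gets the coefficients from the recurrence $(\ell+2)g(\ell+\tfrac32)=(1-\ell)g(\ell-\tfrac12)$, obtained by integration by parts, rather than from the Gamma closed form.
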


\begin{proof}
Note that the claim $v \in H^{1/2}_{\DN;1}(\Sph^1)$
follows immediately from the definition of $v$
and the final claimed equality.
We now compute $\widehat{h}$ on $\tfrac{1}{2} + \Z$.
These values can be expressed
in terms of the Gamma function
using a standard integral identity
(Cauchy's beta integral),
but we give the complete calculation here,
for ease of reference.

We set
\begin{equation}
  g(\xi)
  :=
  2\pi\, \widehat{h}(\xi)
  =
  \int_{-\pi/2}^{\pi/2}
    (\cos x)^{1/2}
    \,
    e^{-i \xi x} 
    \,
  dx
  =
  2\int_0^{\pi/2}
    \sqrt{\cos x} \, \cos \xi x \,
  dx,
\end{equation}
where we have used the fact that $h(-x)=h(x)$ for the last equality.
First,
by elementary substitutions
\begin{equation}
  g(\tfrac{1}{2})
  =
  \tfrac{\pi}{\sqrt{2}},
\end{equation}
confirming the value
$
  \abs{\widehat{v}(0)}^2
  =
  \abs{\widehat{h}(\tfrac{1}{2})}^2
  =
  \tfrac{1}{8}
$.

Next, by angle addition, twice,
integrating by parts in between,
\begin{equation}
\begin{aligned}
  \xi g(\xi-1) + \xi g(\xi+1)
  &=
  4\int_0^{\pi/2}
    (\cos x)^{3/2} \, \xi \cos \xi x \, dx
  \\
  &=
  6\int_0^{\pi/2}  \sqrt{\cos x} \, \sin x \sin \xi x \, dx
  \\
  &=
  \frac{3}{2}g(\xi-1) - \frac{3}{2}g(\xi+1),
\end{aligned}
\end{equation}
and therefore for all $\ell \in \Z$
\begin{equation}
  (\ell + 2)g(\ell + \tfrac{3}{2})
  =
  (1 - \ell)g(\ell - \tfrac{1}{2}).
\end{equation}

Consequently,
(i)
$
  g(\tfrac{3}{2})
  =
  \tfrac{1}{2}g(\tfrac{1}{2})
  =
  \tfrac{\pi}{2\sqrt{2}}
$,
confirming the value
$
  \abs{\widehat{v}(1)}^2
  =
  \abs{\widehat{h}(\tfrac{3}{2})}^2
  =
  \tfrac{1}{32}
$,
(ii)
$g(\tfrac{5}{2}) = 0$,
establishing inductively that
for every integer $q \geq 0$
\begin{equation}
  \widehat{v}(2q+2) = 0,
\end{equation}
and
(iii)
\begin{equation}
  \frac{g(2q + \tfrac{3}{2})}{g(\tfrac{3}{2})}
  =
  (-1)^q 
    \prod_{p=1}^q
      \frac{2p-1}{2p+2} 
  =
  \frac{(-1)^q \binom{2q}{q}}{4^q(q+1)}
  =
  (-4)^{-q}C_q,
\end{equation}
where
\begin{equation}
  C_q
  :=
  \frac{1}{q+1}\binom{2q}{q}
  =
  \frac{(2q)!}{q! \, (q+1)!}
\end{equation}
are the Catalan numbers,
so that for every integer $q \geq 0$
\begin{equation}
  \abs{\widehat{v}(2q+1)}^2
  =
  \abs{\widehat{h}(2q+\tfrac{3}{2})}^2
  =
  \frac{C_q^2}{32(4)^{2q}}.
\end{equation}

Therefore
(using also \ref{test-field-shift-parity})
for all $a \in (0,1)$
\begin{equation}
\begin{aligned}
  \sum_{\ell \in \Z} \abs{\ell + a} \,
    \abs{\widehat{v}(\ell)}^2
  &=
  \sum_{\ell=0}^\infty
    [\abs{\ell+a} + \abs{-\ell-1+a}] \,
    \abs{\widehat{v}(\ell)}^2
  \\
  &=
  \sum_{\ell=0}^\infty (2\ell+1)
    \, \abs{\widehat{v}(\ell)}^2
  =
  \frac{1}{8}
    +\frac{1}{32}
      \sum_{q=0}^\infty
        \frac{4q+3}{16^q} C_q^2
  \\
  &=
  \frac{1}{8}
    +\frac{1}{32}\sum_{q=0}^\infty
      \frac{4(q+1)^2 - (2q+1)^2}{16^q}C_q^2
  \\
  &=
  \frac{1}{8}
    +\frac{1}{8}\sum_{q=0}^\infty
    \left[
      16^{-q}\binom{2q}{q}^2
      -
      16^{-(q+1)}\binom{2(q+1)}{(q+1)}^2
    \right]
  \\
  &=
  \frac{1}{8} + \frac{1}{8} = \frac{1}{4},
\end{aligned}
\end{equation}
since $\lim_{q \to \infty} 4^{-q}\binom{2q}{q} = 0$.
\end{proof}

\begin{proposition}
\label{lower-bound}
For any integers $k \geq 3$ and $1 \leq j \leq k-1$,
writing $a := 1 - \tfrac{j}{k}$,
\begin{equation*}
  \ind_{\DN}(\nop_{(a;k)}) \geq 1.
\end{equation*}
\end{proposition}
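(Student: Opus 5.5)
The plan is to exhibit a single test function on which the Hermitian form $\langle \nop_{(a;k)} \cdot, \cdot \rangle_{L^2(\Sph^1)}$ is strictly negative. I will use the function $v$ of \ref{test-field-def}. By \ref{test-field}, $v \in H^{1/2}_{\DN;1}(\Sph^1)$ and $v \neq 0$, so taking $X := \C v$ in the definition of $\ind_{\DN}(\nop_{(a;k)})$ will give the claim once we show $\langle \nop_{(a;k)} v, v\rangle_{L^2(\Sph^1)} < 0$.

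Write $b := 1 - \tfrac{1}{k}$. By \ref{nop-ak} and Parseval (up to the harmless positive factor $2\pi$),
\begin{equation*}
  \langle \nop_{(a;k)} v, v\rangle
  \propto
  \sum_{\ell \in \Z} \abs{\ell+a}\,\abs{\widehat{v}(\ell)}^2
  - b^2 \sum_{\ell\in\Z} \frac{\abs{\widehat{v}(\ell)}^2}{\abs{\ell+a}}
  =
  \frac{1}{4} - b^2 S(a),
  \qquad
  S(a) := \sum_{\ell\in\Z} \frac{\abs{\widehat{v}(\ell)}^2}{\abs{\ell+a}}.
\end{equation*}
Here the first sum equals $\tfrac14$ by \ref{test-field}. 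It therefore suffices to prove $b^2 S(a) > \tfrac14$.

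To bound $S(a)$ from below, keep only the $\ell = 0$ and $\ell = -1$ terms, together with one strictly positive further term (for instance $\ell = 1$, using $\abs{\widehat v(1)}^2 = \tfrac1{32} > 0$). By \ref{test-field-shift-parity} we have $\abs{\widehat v(-1)}^2 = \abs{\widehat v(0)}^2 = \tfrac18$. Hence
\begin{equation*}
  S(a) > \frac{1}{8}\Bigl(\frac1a + \frac{1}{1-a}\Bigr) = \frac{1}{8}\,\frac{k^2}{j(k-j)},
\end{equation*}
using $a = 1 - j/k$.

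The remaining step, which I expect to be the only delicate point, is the case split in $k$, because the crude bound $S(a) > \tfrac12$ from $\tfrac1a + \tfrac1{1-a} \ge 4$ is not enough when $b^2 = \tfrac49$.
\begin{itemize}
\item For $k \geq 4$: we have $b^2 \geq \tfrac{9}{16}$ and $S(a) > \tfrac12$, so $b^2 S(a) > \tfrac{9}{32} > \tfrac14$.
\item For $k = 3$: we have $j \in \{1,2\}$, so $a \in \{\tfrac13, \tfrac23\}$ and $\tfrac{k^2}{j(k-j)} = \tfrac92$. This gives $S(a) > \tfrac{9}{16}$, hence $b^2 S(a) > \tfrac49 \cdot \tfrac{9}{16} = \tfrac14$. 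Strictness here comes exactly from the extra positive $\ell = 1$ term.
\end{itemize}
In both cases $\langle \nop_{(a;k)} v, v\rangle < 0$, which completes the argument.
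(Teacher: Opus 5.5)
Your proposal is correct and follows the paper's proof. You use the same test function $v$, the same reduction via \ref{test-field} to $\tfrac14 - (1-\tfrac1k)^2\sum_{\ell}\abs{\widehat v(\ell)}^2/\abs{\ell+a}$, and the same truncation to $\ell\in\{0,-1\}$, which yields $\tfrac18\tfrac{(k-1)^2}{j(k-j)}$. The case split is also the same: for $k=3$ the truncated bound is exactly borderline and strictness comes from the discarded terms (you rightly make explicit that the $\ell=1$ term supplies it, which the paper leaves implicit in its strict inequality), and for $k\ge 4$ the bound is comfortably sufficient.
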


\begin{proof}
Take $v$ as in \ref{test-field-def}.
Using \ref{nop-ak}, \ref{test-field},
and \ref{test-field-shift-parity},
\begin{equation*}
\begin{aligned}
  \frac{1}{2\pi}
  \langle \nop_{(a;k)} v, \, v \rangle_{L^2(\Sph^1)}
  &=
  \sum_{\ell \in \Z}
    \left(
      \abs{\ell+ a}
      - \frac{\left(1 - \frac{1}{k}\right)^2}{\abs{\ell+a}}
    \right)
    \abs{\widehat v(\ell)}^2
  \\
  &=
    \frac{1}{4}
    -
    \left(
      1
      -\frac{1}{k}
    \right)^2
  \sum_{\ell \in \Z}
    \frac{\abs{\widehat v(\ell)}^2}{\abs{\ell+a}}
  \\
  &<
  \frac{1}{4}
    -
    \left(
      1
      -\frac{1}{k}
    \right)^2
  \sum_{\ell=-1}^0
    \frac{\abs{\widehat v(\ell)}^2}{\abs{\ell+a}}
  \\
  &=
  \frac{1}{4}
    -
    \frac{1}{8}
    \left(
      1
      -\frac{1}{k}
    \right)^2
    \frac{1}{a(1-a)}
  \\
  &=
    \frac{1}{4}
    -\frac{1}{8} \,
      \frac{k-1}{j} \, \frac{k-1}{k-j}.
\end{aligned}
\end{equation*}

For $k=3$ this last quantity is
$\tfrac{1}{4}-\tfrac{1}{8}\tfrac{2}{1}\tfrac{2}{2} = 0$
(for both $j=1$ and $j=2$),
while in general $j(k-j) \leq k^2/4$,
so that
$
  \tfrac{1}{8}\tfrac{k-1}{j}\tfrac{k-1}{k-j}
  \geq
  \tfrac{1}{2}\tfrac{(k-1)^2}{k^2}
  >
  \tfrac{1}{4} 
$
for $k \geq 4$.
\end{proof}

In conjunction with our earlier upper bound \ref{z-upper},
this lower bound \ref{lower-bound}
proves the $j=1$ and $j=k-1$
cases of \ref{Zclasses},
so in particular the entirety of the $k=3$ case (with $m=2$).
To close the remaining cases
we need an improved upper bound.

\subsection*{Upper bound}

We assume $k \geq 4$ and $2 \leq j \leq k-2$.
For each $a \in (0,1)$
we define the symbol $\sigma_{(a;k)} : \Z \to \R$ by
\begin{equation}
  \sigma_{(a;k)}(\ell)
  :=
  \frac{(\ell+a)^2 - (1-\tfrac{1}{k})^2}{\abs{\ell+a}}
\end{equation}
and the quadratic form $Q_{(a;k)} : H^{1/2}(\Sph^1) \to \R$ by
\begin{equation}
\label{upper-Q}
  Q_{(a;k)}(u)
  :=
  2\pi \sum_{\ell \neq 0,-1}
    \sigma_{(a;k)}(\ell) \abs{\widehat u(\ell)}^2.
\end{equation}
Note that
for $a(j,k) := 1 - \tfrac{j}{k}$
we have
$\sigma_{(a(j,k);k)}(0) < 0$,
$\sigma_{(a(j,k);k)}(-1) < 0$,
$\sigma_{(a(j,k);k)}(\ell) > 0$ for all $\ell \in \Z \setminus \{0,-1\}$,
and
\begin{equation}
\begin{aligned}
  \frac{1}{2\pi}\langle u, \, \nop_{(a(j,k);k)} u \rangle_{L^2(\Sph^1)}
  =
  &\frac{1}{2\pi}Q_{(a(j,k);k)}(u)
    -\abs{\sigma_{(a(j,k);k)}(0)} \abs{\widehat u(0)}^2 
    \\
    &- \abs{\sigma_{(a(j,k);k)}(-1)} \abs{\widehat u(-1)}^2
\end{aligned}
\end{equation}
for all $u \in H^{1/2}(\Sph^1)$.

\begin{lemma}
\label{Q-lower}
Let $k \geq 4$ and $2 \leq j \leq k-2$ be integers,
and set $a(j,k) := 1-\tfrac{j}{k}$.
With $Q_{(a(j,k);k)}$ as in \ref{upper-Q},
for all $u \in H^{1/2}_{\DN;1}(\Sph^1)$
with $\widehat{u}(0) \neq \widehat{u}(-1)$
we have
\begin{equation*}
  Q_{(a(j,k);k)}(u)
  >
  2\pi \abs{\widehat{u}(0)-\widehat{u}(-1)}^2
    \frac{\abs{\sigma_{(a(j,k);k)}(0)} \abs{\sigma_{(a(j,k);k)}(-1)}}
      {\abs{\sigma_{(a(j,k);k)}(0)} + \abs{\sigma_{(a(j,k);k)}(-1)}}.
\end{equation*}
\end{lemma}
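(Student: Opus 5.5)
The plan is a duality argument: I will bound the linear functional $u \mapsto \widehat u(0)-\widehat u(-1)$ by the quadratic form $Q_{(a;k)}$, which does not see the modes $0,-1$. Throughout, write $a := a(j,k)$, $\sigma := \sigma_{(a;k)}$, $s_0 := \abs{\sigma(0)}$, $s_1 := \abs{\sigma(-1)}$.

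First note what the right-hand side is. Minimizing $s_0\abs{\alpha}^2 + s_1\abs{\beta}^2$ subject to $\alpha-\beta = d$ gives exactly $\abs{d}^2 s_0 s_1/(s_0+s_1)$. So the claimed inequality is equivalent to
\begin{equation*}
  \abs{\widehat u(0)-\widehat u(-1)}^2
  <
  \Bigl(\tfrac{1}{s_0}+\tfrac{1}{s_1}\Bigr)
  \frac{Q_{(a;k)}(u)}{2\pi}.
\end{equation*}

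To prove this I would use the support condition $\supp u \subseteq \closure(\arcs{1}{1})$. We have $2\pi(\widehat u(0)-\widehat u(-1)) = \int_{\Sph^1} (1-e^{i\theta})\, u \, d\theta$. Because $u$ vanishes off $\arcs{1}{1}$, the weight $1-e^{i\theta}$ may be altered arbitrarily on $\arcs{1}{0}$ without changing the integral. So I would construct a \emph{dual certificate} $\psi := (1-e^{i\theta}) + g$ with the following properties:
\begin{itemize}
\item $g$ is supported in $\closure(\arcs{1}{0})$;
\item $g$ is chosen (two complex conditions) so that $\widehat\psi(0) = \widehat\psi(-1) = 0$.
\end{itemize}
Then $2\pi(\widehat u(0)-\widehat u(-1)) = \langle \cc{\psi}, u\rangle$ up to conjugation conventions, and this equals $2\pi\sum_{\ell\neq 0,-1} \cc{\widehat\psi(\ell)}\,\widehat u(\ell)$. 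The weighted Cauchy--Schwarz inequality (recall $\sigma(\ell)>0$ for $\ell \neq 0,-1$) then gives
\begin{equation*}
  \abs{\widehat u(0)-\widehat u(-1)}^2
  \leq
  \Bigl(\sum_{\ell\neq 0,-1}\frac{\abs{\widehat\psi(\ell)}^2}{\sigma(\ell)}\Bigr)
  \frac{Q_{(a;k)}(u)}{2\pi}.
\end{equation*}

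It then remains to show $S(\psi) := \sum_{\ell\neq0,-1}\abs{\widehat\psi(\ell)}^2/\sigma(\ell) \leq 1/s_0 + 1/s_1$. Strictness of the lemma's inequality should come from strictness either in this estimate or in Cauchy--Schwarz. The latter can be an equality only if $\widehat u(\ell) \propto \widehat\psi(\ell)/\sigma(\ell)$ for all $\ell \neq 0,-1$, and I would rule that out via the support condition: such a $u$ would be an explicit function that does not vanish on $\arcs{1}{0}$.

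For the choice of $g$, guided by \ref{test-field}, I would use multiples of the rotated test field $h(\underline\theta-\pi)e^{\pm i(\underline\theta-\pi)/2}$ and its companions. Their Fourier coefficients are explicit half-integer shifts of $\widehat h$, given by Catalan-type products, so $\widehat\psi(\ell)$ is explicit; the two coefficients of $g$ are fixed by the linear conditions at $\ell = 0,-1$. One may also retain a free parameter in $g$ to minimize $S(\psi)$.

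The main obstacle is the final numerical inequality $S(\psi) < 1/s_0+1/s_1$, uniformly in $k\geq4$ and $2\leq j\leq k-2$, i.e.\ $a \in [2/k,1-2/k]$. My plan here is as follows.
\begin{itemize}
\item Bound $1/\sigma(\ell) = \abs{\ell+a}/((\ell+a)^2-(1-1/k)^2)$ above by its value at the worst admissible $a$, separately for $\ell \geq 1$ and $\ell \leq -2$. This is monotone in $a$ on each side and symmetric under $a \mapsto 1-a$, $\ell \mapsto -1-\ell$.
\item Compute the first few terms exactly.
\item Bound the tail using $\abs{\widehat\psi(\ell)}^2 = O(\ell^{-3})$ from the Catalan asymptotics, together with $1/\sigma(\ell) \leq 1/(\abs{\ell+a}-1)$.
\item Compare with the lower bound $1/s_0+1/s_1 \geq \min_a \bigl(a/((1-1/k)^2-a^2)+(1-a)/((1-1/k)^2-(1-a)^2)\bigr)$.
\end{itemize}
If a single $g$ does not suffice at the extreme $a = 2/k$ for small $k$, I would treat finitely many $(j,k)$ by direct computation and handle large $k$ asymptotically.
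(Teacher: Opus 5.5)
Your framework is the paper's. You take a certificate $\Psi$ equal to $1-e^{-i\theta}$ on $\arcs{1}{1}$ with $\widehat\Psi(0)=\widehat\Psi(-1)=0$, then apply weighted Cauchy--Schwarz over $\ell\neq 0,-1$. Two small remarks. With your unconjugated pairing $\int\psi u$, the vanishing conditions belong on $\widehat{\cc{\psi}}$, not $\widehat\psi$; this is harmless. Also, no equality-case analysis is needed: once $\widehat u(0)\neq\widehat u(-1)$, Cauchy--Schwarz forces $Q_{(a;k)}(u)>0$, so a strict bound $S<1/s_0+1/s_1$ already gives strictness.

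Your certificate works and needs no companions. Take $\Psi := (1-e^{-i\theta}) - 2\sqrt{2}\,v(\theta-\pi)$, with $v$ as in \ref{test-field-def}. Then $\widehat\Psi(\ell)=-2\sqrt{2}(-1)^\ell\widehat v(\ell)$ for $\ell\neq 0,-1$, and $\widehat\Psi(0)=\widehat\Psi(-1)=0$ because $\widehat v(0)=\widehat v(-1)=1/(2\sqrt{2})$. So $\abs{\widehat\Psi(\ell)}^2=8\abs{\widehat v(\ell)}^2$ is fully explicit from \ref{test-field}. The paper instead puts $\frac{2-\pi}{2+\pi}(1-e^{-i\theta})$ on $\arcs{1}{0}$. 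Its coefficients are elementary and smaller at $\ell=1,-2$ ($4/(2+\pi)^2$ against your $\tfrac14$), but they decay only like $\abs{\ell}^{-1}$.

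The gap is in your final comparison. You bound $S$ by its worst case over admissible $a$ ($a=2/k$ for $\ell\geq 1$, $a=1-2/k$ for $\ell\leq -2$), and $1/s_0+1/s_1$ by its minimum, near $a=\tfrac12$. This decoupled inequality fails for large $k$, not small:
\begin{itemize}
\item At $a=2/k$ one has $1/\sigma_{(a;k)}(1)=\frac{k(k+2)}{3(2k+1)}$. With your certificate, the $\ell=1$ and $\ell=-2$ terms alone then contribute $\frac{k(k+2)}{6(2k+1)}\sim k/12$.
\item Meanwhile $\min_a(1/s_0+1/s_1)=1/((1-\tfrac1k)^2-\tfrac14)\to\tfrac43$.
\item The comparison is already lost at $k=18$. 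It fails with the paper's certificate too, for somewhat larger $k$.
\end{itemize}
Both sides are of order $k$ precisely at the extreme values of $a$: at $a=2/k$, $1/s_1\approx k/2$ against roughly $k/24$ from $\ell=1$. So $a$ must be kept coupled, and an ``asymptotic in $k$'' treatment cannot avoid this.

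The missing idea is to eliminate $k$ instead of $a$. Replacing $1-\tfrac1k$ by $1$ increases each $1/\sigma(\ell)$ with $\ell\neq 0,-1$ (they stay positive) and decreases $1/s_0+1/s_1$. What remains is a single inequality between explicit functions of $a\in(0,1)$, which is elementary in $s=a(1-a)$. For your certificate, with $H$ and $R$ as in the paper:
\begin{itemize}
\item The leading terms give $R-\tfrac14 H=\frac{18-29s-7s^2}{4s(s+2)(s+6)}\geq\tfrac{11}{15}$ on $(0,\tfrac14]$.
\item The remaining nonzero coefficients sit at $\abs{\ell+a}>3$, where $1/\sigma_{(a;\infty)}(\ell)<\tfrac12$.
\item Their total mass is $\tfrac12\sum_{q\geq 1}C_q^2\,16^{-q}\leq\tfrac{1}{14}$, by the identity $\sum_{q\geq 0}(4q+3)C_q^2\,16^{-q}=4$ from the proof of \ref{test-field}. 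They therefore contribute less than $\tfrac{1}{28}$.
\end{itemize}
This settles every $(j,k)$ at once; no case check is needed.
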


\begin{proof}
Define
$\Psi \in L^2(\Sph^1)$ by
\begin{equation}
  \Psi(\theta)
  :=
  \begin{cases}
    1-e^{-i\theta}
      &\mbox{on } \arcs{1}{1}
    \\
    \tfrac{2-\pi}{2+\pi}(1-e^{-i\theta})
      &\mbox{on } \Sph^1 \setminus \arcs{1}{1},
  \end{cases}
\end{equation}
and let $u \in H^{1/2}_{\DN;1}(\Sph^1)$
with $\widehat{u}(0) \neq \widehat{u}(-1)$.
We compute
\begin{equation}
  \frac{2+\pi}{2}\abs{\widehat\Psi(\ell)}
  =
  \begin{cases}
    0
      &\mbox{on } \{0,-1\}
    \\
    \abs{\ell+1}^{-1}
      &\mbox{on } 2\Z \setminus \{0\}
    \\
    \abs{\ell}^{-1}
      &\mbox{on } (2\Z + 1) \setminus \{-1\}
  \end{cases}
\end{equation}
and (using $\supp u \subseteq \closure{\arcs{1}{1}}$)
\begin{equation}
  \widehat{u}(0) - \widehat{u}(-1)
  =
  \frac{1}{2\pi}\langle \Psi, u \rangle_{L^2(\Sph^1)}
  =
  \sum_{\ell \in \Z} \cc{\widehat{\Psi}(\ell)} \widehat{u}(\ell)
  =
  \sum_{\ell \neq 0,-1} \cc{\widehat{\Psi}(\ell)} \widehat{u}(\ell).
\end{equation}
Consequently,
\begin{equation}
  \abs{\widehat{u}(0) - \widehat{u}(-1)}^2
  \leq
  \left(
    \sum_{\ell \neq 0,-1}
      \frac{\abs{\widehat{\Psi}(\ell)}^2}
      {\sigma_{(a(j,k);k)}(\ell)}
  \right)
  \left(
    \sum_{\ell \neq 0,-1} \sigma_{(a(j,k);k)}(\ell)
      \abs{\widehat{u}(\ell)}^2
  \right),
\end{equation}
so, in view of the definition \ref{upper-Q} of $Q_{(a;k)}$,
it remains to estimate the first sum.

To proceed, we define for each $a \in (0,1)$
\begin{equation}
  \sigma_{(a;\infty)}(\ell)
  :=
  \frac{(\ell+a)^2 - 1}{\abs{\ell+a}}
  =
  \frac{(\ell-1+a)(\ell+1+a)}{\abs{\ell+a}}
  <
  \sigma_{(a;k)}(\ell)
\end{equation}
and
\begin{equation}
\begin{aligned}
  H(a)
  &:=
  \frac{(2+\pi)^2}{4}
    \sum_{\ell \in \{1,-2\}}
      \frac{\abs{\widehat{\Psi}(\ell)}^2}
      {\sigma_{(a;\infty)}(\ell)}
  \\
  &=
  \frac{a+1}{a(a+2)} - \frac{a-2}{(a-3)(a-1)}
  \\
  &=
  \frac{-3(a^2 - a - 1)}{(a^2-a)(a^2-a-6)}.
\end{aligned}
\end{equation}

Then for all $a \in (0,1)$
\begin{equation}
\begin{aligned}
  \sum_{\ell \neq 0, -1}
      \frac{\abs{\widehat{\Psi}(\ell)}^2}
      {\sigma_{(a;\infty)}(\ell)}
  -\frac{4H(a)}{(2+\pi)^2}
  &=
  \sum_{\ell \neq -2,-1,0,1}
    \frac{\abs{\widehat{\Psi}(\ell)}^2}
    {\sigma_{(a;\infty)}(\ell)}
  \leq
  \sum_{\ell \neq -2,-1,0,1}
    \frac{4\abs{\widehat{\Psi}(\ell)}^2}
    {3\abs{\ell+a}}
  \\
  &\leq
  \sum_{\ell \neq -2,-1,0,1}
    \frac{4\abs{\widehat{\Psi}(\ell)}^2}
    {3(\abs{\ell}-1)}
  \\
  &\leq
  \frac{4}{(2+\pi)^2}
  \sum_{\ell \neq -2,-1,0,1}
    \frac{4}
    {3(\abs{\ell}-1)^3}
  \\
  &
  <
  \frac{16}{3(2+\pi)^2}
    \left(
      1
      +2\int_1^\infty \frac{1}{x^3}
    \right)
  =
  \frac{32}{3(2+\pi)^2},
\end{aligned} 
\end{equation}
and so
\begin{equation}
    \sum_{\ell \neq 0,-1}
      \frac{\abs{\widehat{\Psi}(\ell)}^2}
      {\sigma_{(a(j,k);k)}(\ell)}
  <
  \frac{4H(a)}{(2+\pi)^2} + \frac{32}{3(2+\pi)^2}
  <
  \frac{4H(a)}{25} + \frac{32}{75}.
\end{equation}

On the other hand, for any
$a \in [2/k,(k-2)/k]$
\begin{equation}
\begin{aligned}
  \frac{1}{\abs{\sigma_{(a;k)}(0)}}
    +\frac{1}{\abs{\sigma_{(a;k)}(-1)}}
  &>
  \frac{1}{\abs{\sigma_{(a;\infty)}(0)}}
    +\frac{1}{\abs{\sigma_{(a;\infty)}(-1)}}
  =:
  R(a)
  \\
  &=
  -\frac{a}{(a-1)(a+1)} + \frac{a-1}{a(a-2)}
  \\
  &=
  \frac{a^2-a+1}{(a^2-a)(a^2-a-2)}.
\end{aligned}
\end{equation}
To complete the proof it suffices to show
\begin{equation}
  \frac{4}{25}H(a) + \frac{32}{75} < R(a)
\end{equation}
for all $a \in (0,1)$.

Making the substitution
$s = s(a) := a(1-a) \in (0,\tfrac{1}{4}]$,
we have
\begin{equation}
\begin{gathered}
  H(a) = \frac{3(s + 1)}{s(s+6)},
  \quad
  R(a) = \frac{1-s}{s(s+2)},
  \\
  R(a) - H(a)
  =
  \frac{-4s-14}{s^2+8s+12}.
\end{gathered}
\end{equation}
We then find that for all $a \in (0,1)$
\begin{equation}
  R(a) - H(a) \geq -\frac{7}{6},
  \quad
  R(a) \geq \frac{4}{3},
\end{equation}
whence
\begin{equation}
  75R(a) - 12H(a)
  \geq
  70
  >
  32,
\end{equation}
ending the proof.
\end{proof}

The proof of \ref{m=2} is now finished by the following result.

\begin{proposition}
For any integers
$k \geq 4$ and $2 \leq j \leq k-2$,
writing $a(j,k) := 1 - \tfrac{j}{k}$,
\begin{equation*}
  \ind_{\DN}(\nop_{(a(j,k);k)})
    + \nul_{\DN}(\nop_{(a(j,k);k)})
  \leq
  1.
\end{equation*}
\end{proposition}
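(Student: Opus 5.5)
The plan is to show that the Hermitian form $q(u) := \langle \nop_{(a;k)} u, \, u\rangle_{L^2(\Sph^1)}$, with $a = a(j,k)$, is positive definite on a closed subspace of $H^{1/2}_{\DN;1}(\Sph^1)$ of codimension at most one. If $q$ is positive on such a subspace $W$, then any $X$ on which $q$ is negative definite meets $W$ trivially, so $\dim X \leq 1$. The same applies to $X$ equal to the span of the negative space together with the null space (on which $q \leq 0$, with equality only on null vectors), giving index plus nullity at most $1$. Concretely, I take
\begin{equation*}
  W := \{u \in H^{1/2}_{\DN;1}(\Sph^1) \st \widehat{u}(0) + \widehat{u}(-1)\ \text{suitably weighted} = 0\}.
\end{equation*}
The precise linear condition is chosen so that Lemma \ref{Q-lower} applies most efficiently.

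Write $\sigma_0 := \abs{\sigma_{(a;k)}(0)}$ and $\sigma_{-1} := \abs{\sigma_{(a;k)}(-1)}$, and set $x := \widehat u(0)$, $y := \widehat u(-1)$. By the decomposition preceding Lemma \ref{Q-lower},
\begin{equation*}
  \frac{1}{2\pi} q(u) = \frac{1}{2\pi} Q(u) - \sigma_0 \abs{x}^2 - \sigma_{-1}\abs{y}^2 .
\end{equation*}
The key elementary identity is
\begin{equation*}
  \sigma_0\abs{x}^2 + \sigma_{-1}\abs{y}^2
  = \frac{\sigma_0\sigma_{-1}}{\sigma_0+\sigma_{-1}}\abs{x-y}^2
  + \frac{1}{\sigma_0+\sigma_{-1}}\abs{\sigma_0 x + \sigma_{-1} y}^2 .
\end{equation*}
It follows by expanding, since the cross terms cancel. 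I therefore impose the single linear constraint $\sigma_0 \widehat u(0) + \sigma_{-1}\widehat u(-1) = 0$, which is continuous on $H^{1/2}$. On this subspace,
\begin{equation*}
  \frac{1}{2\pi}q(u) = \frac{1}{2\pi}Q(u) - \frac{\sigma_0\sigma_{-1}}{\sigma_0+\sigma_{-1}}\abs{x-y}^2 .
\end{equation*}

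Now I split into two cases for $u \neq 0$ in $W$. If $x \neq y$, Lemma \ref{Q-lower} gives $q(u) > 0$ directly. If $x = y$, the constraint forces $(\sigma_0+\sigma_{-1})x = 0$, so $x = y = 0$ and $q(u) = Q(u)$. Since $\sigma_{(a;k)}(\ell) > 0$ for $\ell \neq 0,-1$, we get $Q(u) \geq 0$, with equality only if $\widehat u$ is supported in $\{0,-1\}$. That would force $u = 0$, so $q(u) > 0$ in this case too. Thus $q$ is positive definite on $W$, which has codimension at most one in $H^{1/2}_{\DN;1}(\Sph^1)$, and the bound $\ind + \nul \leq 1$ follows. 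For the nullity part, note that any $u$ in the form's kernel satisfies $q(u) = 0$. Hence the negative space $X$ and the kernel $N$ together span a space on which $q \leq 0$ and $q(u) = 0$ only on $N$. Since $q$ is negative definite on $X$, we have $X \cap N = 0$, so $\dim X + \dim N \leq 1$.

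The main obstacle is already handled by Lemma \ref{Q-lower}, namely the quantitative bound on $Q$ in terms of $\abs{\widehat u(0)-\widehat u(-1)}^2$. What remains is the correct choice of constraint, dictated by the orthogonal splitting identity above, together with the care needed in the degenerate case $x = y$, where the strictness comes from the positivity of the remaining symbol values.
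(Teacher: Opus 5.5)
Your argument is correct and is essentially the paper's proof. The paper's test vector $w$, normalized by $\widehat{w}(0) = 1/\sigma_{(a;k)}(0)$ and $\widehat{w}(-1) = -1/\sigma_{(a;k)}(-1)$, lies exactly on your hyperplane $\sigma_0\widehat{u}(0) + \sigma_{-1}\widehat{u}(-1) = 0$, and positivity there is likewise obtained from Lemma \ref{Q-lower}; the difference is only packaging, since the paper argues by contradiction from a two-dimensional nonpositive subspace whereas you prove positive definiteness on $W$ directly and handle the case $\widehat{u}(0)=\widehat{u}(-1)$ separately.
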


\begin{proof}
Suppose to the contrary
there exists 
$X \leq H^{1/2}_{\DN;1}(\Sph^1)$
with
$\dim X = 2$
and
$\langle \nop_{(a(j,k);k)} u, \, u \rangle_{L^2(\Sph^1)} \leq 0$
for every $u \in X$.
Then
$u \mapsto (\widehat{u}(0), \widehat{u}(-1))$
is an isomorphism from $X$ to $\C^2$,
since any nonzero $u \in H^{1/2}(\Sph^1)$
with $\widehat{u}(0)=\widehat{u}(-1)=0$
satisfies
$\langle u, \, \nop_{(a(j,k);k)} u \rangle_{L^2(\Sph^1)} > 0$.
In particular there exists a unique
$w \in X$
with
\begin{equation}
  \widehat{w}(0) = \frac{1}{\sigma_{(a(j,k);k)}(0)},
  \quad
  \widehat{w}(-1) = -\frac{1}{\sigma_{(a(j,k);k)}(-1)}.
\end{equation}

Then by \ref{Q-lower} 
\begin{equation}
  \frac{Q_{(a(j,k);k)}(w)}{2\pi}
  >
  \frac{\abs{\sigma_{(a(j,k);k)}(0)} + \abs{\sigma_{(a(j,k);k)}(-1)}}
    {\abs{\sigma_{(a(j,k);k)}(0)} \abs{\sigma_{(a(j,k);k)}(-1)}},
\end{equation}
so
\begin{equation}
  \langle \nop_{(a(j,k);k)} w, \, w \rangle_{L^2(\Sph^1)}
  =
  \frac{-2\pi}{\abs{\sigma_{(a(j,k);k)}(0)}}
    + \frac{-2\pi}{\abs{\sigma_{(a(j,k);k)}(-1)}}
    + Q_{(a(j,k);k)}(w)
  >
  0,
\end{equation}
contradicting the choice of $X$.
\end{proof}

\section{Quotients with six ends}
\label{quasiperiodic}

\subsection*{Fourier decomposition}
This section is devoted to the proof of \ref{k=3},
so we now fix $k=3$ and consider arbitrary $m \geq 1$.
Since $\ks_3$ is invariant under
the commuting symmetries
$\trans_z^{2\pi}$ and $\rot_z^{2\pi/3}$,
our problem is amenable to Fourier analysis
over the group $\Z \oplus \Z_3$.
For each $\alpha \in \R$
and $j \in \Z_3$
we define the function spaces
\begin{equation}
\begin{gathered}
  \C^{\ks_3}_{\alpha,j}
  :=
  \{
    u : \ks_3 \to \C
    \st
    u \circ \rot_z^{2\pi/3} = e^{2j\pi i / 3}u
    \mbox{ and }
    u \circ \trans_z^{2\pi} = e^{2\alpha \pi i}u
  \},
  \\
  C^\infty_{\alpha,j}(\ks_3)
  :=
  C^\infty(\ks_3) \cap \C^{\ks_3}_{\alpha,j},
  \\
  C^\infty_{0,\alpha,j}(\ks_3)
  :=
  \{
    u \in C^\infty_{\alpha,j}(\ks_3)
    \st
    \exists R > 0
    \;
    \supp u \subset \{x^2+y^2 \leq R^2\}
  \}
\end{gathered}
\end{equation}
and the index and nullity
\begin{equation}
\begin{gathered}
  \begin{aligned}
  \ind_{\alpha,j}(\ks_3)
    :=
      \sup
      \big\{
        &\dim X
        \st
        X \leq C_{0,\alpha,j}^\infty(\ks_3)
        \mbox{ and }
        \forall u \in X \setminus \{0\} \quad
        \\
        &\norm{du}_{L^2\big(\ks_3^{[0,2\pi]}, ~ g_3\big)}^2
        < \langle u, \abs{A_3}^2u \rangle_{L^2\big(\ks_3^{[0,2\pi]}, ~ g_3\big)}
      \big\},
  \end{aligned}
  \\
  \nul_{\alpha,j}(\ks_3)
    :=
    \dim
    \{
      u \in C^\infty_{\alpha,j}(\ks_3) \cap L^\infty(\ks_3)
      \st
      \jop_3 u = 0
    \},
\end{gathered}
\end{equation}
and more coarsely
\begin{equation}
\label{j_decomp_given_alpha}
  \ind_\alpha(\ks_3) := \sum_{j=0}^2 \ind_{\alpha, j}(\ks_3),
  \qquad
  \nul_\alpha(\ks_3) := \sum_{j=0}^2 \nul_{\alpha, j}(\ks_3).
\end{equation}

For each integer $m \geq 1$ we then have
\begin{equation}
\label{k=3_decomp}
  \ind(\ks_{3,m})
  =
  \sum_{n=0}^{m-1} \ind_{\tfrac{n}{m}}(\ks_3),
  \qquad
  \nul(\ks_{3,m})
  =
  \sum_{n=0}^{m-1} \nul_{\tfrac{n}{m}}(\ks_3),
\end{equation}
while by \ref{edges}
\begin{equation}
\label{alpha=0}
  \ind_0(\ks_3) = \ind(\ks_{3,1}) = 3,
  \qquad
  \nul_0(\ks_3) = \nul(\ks_{3,1}) = 3.
\end{equation}
Note that for each $\alpha \in \R$
\begin{equation}
\label{j_sym}
  \ind_{\alpha,1}(\ks_3) = \ind_{\alpha,2}(\ks_3),
  \quad
  \nul_{\alpha,1}(\ks_3) = \nul_{\alpha,2}(\ks_3),
\end{equation}
as $\refl_{\{y=0\}}$ is a symmetry of $\ks_3$
taking $\C^{\ks_3}_{\alpha,1}$ to $\C^{\ks_3}_{\alpha,2}$,
and for each $j \in \Z_3$
\begin{equation}
  \ind_{\alpha,j}(\ks_3) = \ind_{1-\alpha,-j}(\ks_3),
  \quad
  \nul_{\alpha,j}(\ks_3) = \nul_{1-\alpha,-j}(\ks_3),
\end{equation}
as complex conjugation takes $\C^{\ks_3}_{\alpha,j}$
to $\C^{\ks_3}_{1-\alpha,-j}$
(and preserves the Jacobi form);
in particular
$\ind_\alpha(\ks_3) = \ind_{1-\alpha}(\ks_3)$
and
$\nul_\alpha(\ks_3) = \nul_{1-\alpha}(\ks_3)$.

\subsection*{Slit sphere}
We define
\begin{equation}
  \Sph^2_3
  :=
  \Sph^2 \setminus \stereop^{-1}(e^{i\closure(\arcs{3}{0})}),
\end{equation}
the sphere slit along three equatorial arcs
$\{(0,\theta) \st \theta \in \closure(\arcs{3}{0})\}$,
recalling the cylindrical coordinates
$(t,\theta) := (\Re \log \stereop, ~ \Im \log \stereop)$
on $\Sph^2 \setminus \{\abs{z}=1\}$.
For each $u \in H^1(\Sph^2_3)$
we define the upper and lower traces
$u(+0,\cdot), u(-0,\cdot) \in H^{1/2}(\Sph^1)$
by
\begin{equation}
  u(\pm 0, \cdot)
  :=
  u|_{\{\pm z > 0\}}\big|_{\{z=0\}}
    \circ \theta.
\end{equation}
Note that
\begin{equation}
  \forall \phi \in \arcs{3}{1}
  \quad
  u(+0, \phi) = u(-0,\phi),
\end{equation}
but the value may jump across the slits.
Note also that the Rellich lemma holds on $\Sph^2_3$:
the inclusion $H^1(\Sph^2_3) \hookrightarrow L^2(\Sph^2_3)$
is compact.

For each $\alpha \in [0,1)$ and $j \in \Z_3$
we define the Sobolev spaces
\begin{equation}
\begin{gathered}
  H^1_{(j;3)}(\Sph^2_3)
    :=
    \{
      u \in H^1(\Sph^2_3)
      \st
      u \circ \rot_z^{-2\pi/3}
      =
      e^{2j\pi i/3}u
    \},
  \\
  H^1_{[\alpha]}(\Sph^2_3)
    :=
    \{
      u \in H^1(\Sph^2_3)
      \st
      u(+0,\cdot) = e^{2 \alpha \pi i}u(-0,\cdot)
        \mbox{ on } \arcs{3}{0}
    \},
  \\
  H^1_{[\alpha],j}(\Sph^2_3)
    :=
    H^1_{[\alpha]}(\Sph^2_3) \cap H^1_{(j;3)}(\Sph^2_3).
\end{gathered}
\end{equation}
Note that
$H^1_{[0]}(\Sph^2_3) = H^1(\Sph^2)$
and $H^1_{[0],j}(\Sph^2_3) = H^1_{(j;3)}(\Sph^2)$
for each $j \in \Z_3$.

\begin{lemma}
\label{slit-density}
For each $\alpha \in [0,1)$ and $j \in \Z_3$
the functions in
$
  H^1_{[\alpha],j}(\Sph^2_3)
  \cap
  C^\infty(\Sph^2_3)
$
that vanish on a neighborhood of the union $\pts{3}$
of the six slit endpoints
and whose restrictions to the northern and southern hemispheres
extend smoothly to the equator
are dense in $H^1_{[\alpha],j}(\Sph^2_3)$.
\end{lemma}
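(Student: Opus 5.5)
The plan follows the density arguments of \ref{junction-density} and \ref{DN-density}: truncate, cut off logarithmically at the junctions, then mollify. Every step must preserve three properties: the Fourier class $j$, the quasiperiodic jump condition $u(+0,\cdot) = e^{2\alpha\pi i}u(-0,\cdot)$ on $\arcs{3}{0}$, and the continuity of the trace across $\arcs{3}{1}$.

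\emph{Truncation.} Given $u \in H^1_{[\alpha],j}(\Sph^2_3)$, I first reduce to bounded $u$. Real truncation does not commute with the unimodular phase, so I use a radial truncation $T_R(u) := u\min(1,R/\abs{u})$. This is Lipschitz, commutes with multiplication by unimodular constants, and acts pointwise. It therefore preserves the class $j$, the relation between the upper and lower traces, and the continuity across $\arcs{3}{1}$. Moreover $T_R u \to u$ in $H^1$ as $R \to \infty$.

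\emph{Cutoff at the endpoints.} Next I multiply by the $\rot_z^{2\pi/3}$-invariant logarithmic cutoff $\chi_\varepsilon$, which vanishes near $\pts{3}$. Since $\chi_\varepsilon$ is a smooth function on all of $\Sph^2$, it is continuous across the equator. It therefore preserves both trace relations and the class $j$. Because $u$ is bounded and the capacity of points in two dimensions is zero, $\chi_\varepsilon u \to u$ in $H^1$.

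\emph{Local approximation.} It remains to approximate $w := \chi_\varepsilon u$, which vanishes near the six endpoints. I use a finite $\rot_z^{2\pi/3}$-equivariant partition of unity subordinate to three kinds of open sets: sets away from the equator, discs centered on interior points of the open arcs $\arcs{3}{1}$, and discs centered on interior points of the slits $\arcs{3}{0}$.
\begin{itemize}
\item Away from the equator, and on discs meeting $\arcs{3}{1}$, $w$ is an ordinary $H^1$ function, and standard mollification gives smooth approximants.
\item On a disc $D$ about a point of a slit, I define $\tilde w$ to equal $w$ on $D \cap \{z<0\}$ and $e^{-2\alpha\pi i}w$ on $D \cap \{z>0\}$. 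The jump condition says exactly that the traces of $\tilde w$ from the two sides agree, so $\tilde w \in H^1(D)$. I mollify $\tilde w$ on $D$ and then multiply back by $e^{2\alpha\pi i}$ on the upper half. The result is smooth on each closed half-disc up to the equator, satisfies the jump condition, and converges to $w$ in $H^1$.
\end{itemize}
Shrinking the supports keeps each approximant away from $\pts{3}$. Summing the local pieces and averaging over the $\Z_3$ action with weights $e^{-2j\pi i n/3}$ restores the class $j$. This averaging preserves the other properties, since the rotations preserve the slits and the potential.

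\emph{Main obstacle.} The delicate step is the interface between the slit discs and the arc discs. Near a point of $\arcs{3}{1}$ the function is continuous across the equator, and near a slit it is twisted by the phase, so the two regimes meet only at $\pts{3}$. The endpoint cutoff is precisely what makes these two regimes disjoint. The argument therefore depends on $\chi_\varepsilon u \to u$ in $H^1$, and I need boundedness of $u$ for that, which is why the radial truncation comes first.
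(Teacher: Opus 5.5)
Your proposal is correct and follows the paper's proof. Both truncate, apply the $\rot_z^{2\pi/3}$-invariant logarithmic cutoff at the six slit endpoints, and mollify near each slit after multiplying one side by the constant phase $e^{\mp 2\alpha\pi i}$ so that the two traces agree; your twist--mollify--untwist on a whole disc is equivalent to the paper's twisting of one half-neighborhood at a time. You also make precise two points the paper leaves implicit: the truncation must be radial, $u\min(1,R/\abs{u})$, so that it commutes with unimodular constants and hence preserves both the jump condition and the class $j$; and the final $\Z_3$-averaging restores the class $j$.
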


\begin{proof}
The operations proving \ref{junction-density} go through here too,
replacing $u|_{\{z<0\}}$ by
$e^{2 \alpha \pi i}u|_{\{z<0\}}$
when defining the mollification
of the truncated and logarithmically cut-off function $u$
on a neighborhood in $\{z>0\}$ of a slit,
and, symmetrically, replacing $u|_{\{z>0\}}$ by
$e^{-2 \alpha \pi i}u|_{\{z>0\}}$
when defining its mollification
on a neighborhood in $\{z<0\}$ of a slit.
\end{proof}

We will consider on $H^1(\Sph^2_3) \times H^1(\Sph^2_3)$
the Hermitian form
\begin{equation}
  \bop_{\Sph^2_3}(u,v)
  :=
  \langle du, dv \rangle_{L^2(\Sph^2_3)}
  -\langle u, V^{\Sph^2}_3 v \rangle_{L^2(\Sph^2_3)}.
\end{equation}

\begin{lemma}
\label{slit-removal}
For $\alpha \in [0,1)$ and $j \in \Z_3$,
if $u \in H^1_{[\alpha],j}(\Sph^2_3)$ satisfies
$\bop_{\Sph^2_3}(u,v) = 0$
for every $v \in H^1_{[\alpha],j}(\Sph^2_3)$
vanishing on a neighborhood of $\pts{3}$,
then $\bop_{\Sph^2_3}(u,v) = 0$ for every $v \in H^1_{[\alpha],j}(\Sph^2_3)$.
\end{lemma}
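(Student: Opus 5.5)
The argument will follow the proofs of \ref{puncture-removal} and \ref{DN-puncture-removal}: we show that $v \mapsto \bop_{\Sph^2_3}(u,v)$ is continuous on $H^1_{[\alpha],j}(\Sph^2_3)$, then use density.

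\emph{Continuity.} Since $V^{\Sph^2}_3$ is bounded (with values in $[0,8]$) and $du \in L^2(\Sph^2_3)$, Cauchy--Schwarz gives
\begin{equation*}
  \abs{\bop_{\Sph^2_3}(u,v)}
  \leq
  \norm{du}_{L^2}\norm{dv}_{L^2}
  + 8\norm{u}_{L^2}\norm{v}_{L^2}
  \leq
  C\norm{u}_{H^1(\Sph^2_3)}\norm{v}_{H^1(\Sph^2_3)} .
\end{equation*}
So $\bop_{\Sph^2_3}(u,\cdot)$ is a continuous conjugate-linear functional on $H^1(\Sph^2_3)$, and a fortiori on the closed subspace $H^1_{[\alpha],j}(\Sph^2_3)$. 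That subspace is closed because the upper and lower trace maps are continuous into $H^{1/2}(\Sph^1)$, and the rotation condition is preserved under $H^1$ limits.

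\emph{Density.} Let $D \subset H^1_{[\alpha],j}(\Sph^2_3)$ be the subspace of functions vanishing near $\pts{3}$. By hypothesis the functional vanishes on $D$. By \ref{slit-density}, the smaller class of such functions that are also smooth up to the equator on each hemisphere is dense in $H^1_{[\alpha],j}(\Sph^2_3)$. Hence $D$ is dense as well.

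\emph{Conclusion.} Given any $v \in H^1_{[\alpha],j}(\Sph^2_3)$, choose $v_n \in D$ with $v_n \to v$ in $H^1$. Continuity then gives
\begin{equation*}
  \bop_{\Sph^2_3}(u,v) = \lim_{n\to\infty} \bop_{\Sph^2_3}(u,v_n) = 0 .
\end{equation*}

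The only real content is the density statement \ref{slit-density}, which may be assumed, so I expect no step to be a genuine obstacle. The one point needing care is that the approximating functions satisfy both conditions defining $H^1_{[\alpha],j}(\Sph^2_3)$: the $\rot_z^{2\pi/3}$-equivariance and the phase-jump condition across the slits. This holds because \ref{slit-density} is stated within that space.
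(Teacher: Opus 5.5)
Your proposal is correct and follows the paper's proof: continuity of $\bop_{\Sph^2_3}(u,\cdot)$ on $H^1(\Sph^2_3)$, which comes from the bounded potential and $du \in L^2$, combined with the density result \ref{slit-density}. The paper states this in one line by reference to \ref{puncture-removal}.
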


\begin{proof}
Arguing much as in \ref{puncture-removal},
we use the continuity of
$\bop_{\Sph^2_3}(u,\cdot)$ on $H^1(\Sph^2_3)$
and the density \ref{slit-density}.
\end{proof}

Next we define
for any $\alpha \in [0,1)$
and $j \in \Z_3$
\begin{equation}
\label{slit-ind-nul-def}
\begin{gathered}
  \begin{aligned}
  \ind_{\alpha,j}(\sphop_3, \Sph^2_3)
  :=
  \sum_{\lambda < 0}
  \dim
  \{
    &u \in H^1_{[\alpha],j}(\Sph^2_3)
    \st
    \forall v \in H^1_{[\alpha],j}(\Sph^2_3)
    \\
    &\;
    \bop_{\Sph^2_3}(u,v) = \lambda \langle u, v \rangle_{L^2(\Sph^2_3)}
  \},
  \end{aligned}
  \\
  \nul_{\alpha,j}(\sphop_3, \Sph^2_3)
  :=
  \dim
  \{
    u \in H^1_{[\alpha],j}(\Sph^2_3)
    \st
    \forall v \in H^1_{[\alpha],j}(\Sph^2_3)
    \;\,
    \bop_{\Sph^2_3}(u,v) = 0
  \}.
\end{gathered}
\end{equation}

\begin{lemma}
\label{slit-form-equiv}
For each $\alpha \in [0,1) \cap \Q$
and for each $j \in \Z_3$
\begin{equation*}
  \ind_{\alpha,j}(\ks_3)
  =
  \ind_{\alpha,j}(\sphop_3, \Sph^2_3),
  \qquad
  \nul_{\alpha,j}(\ks_3)
  =
  \nul_{\alpha,j}(\sphop_3,\Sph^2_3).
\end{equation*}
\end{lemma}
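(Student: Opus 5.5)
The plan is to follow the template of \ref{index-nullity-conf-comp-equiv} and \ref{DN-form-equiv}, with the quasiperiodic space $H^1_{[\alpha],j}(\Sph^2_3)$ replacing the Dirichlet, Neumann and Zaremba spaces.

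\textbf{Transfer of the Jacobi form.} First I identify a fundamental period $\ks_3^{(0,2\pi)}$, together with its boundary behavior, with the slit sphere. By \ref{ewcover}, $\stereop^{-1}\circ\ewcover_3$ maps $\ks_3^{(0,\pi)}$ conformally onto $\Sph^2_-$. The relation $\ewcover_3\circ\refl_{\{z=\pi\}} = \ewcover_3\circ \trans_z^{2\pi}\refl_{\{z=0\}}\cdots$ (equivalently, \ref{sph_symmetries}) shows that $\ks_3^{(\pi,2\pi)}$ maps conformally onto $\Sph^2_+$. The two halves are glued along $\{z=\pi\}$, whose image is $e^{i\arcs{3}{1}}$. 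The boundary pieces $\{z=0\}$ and $\{z=2\pi\}$ both map to $e^{i\arcs{3}{0}}$, the slits. The quasiperiodicity $u\circ\trans_z^{2\pi}=e^{2\alpha\pi i}u$ therefore becomes precisely the jump condition $u(+0,\cdot)=e^{2\alpha\pi i}u(-0,\cdot)$ on $\arcs{3}{0}$. I will check the direction of the phase against the orientation of the identification, and if needed replace $\alpha$ by $1-\alpha$, harmless by the conjugation symmetry noted above. Fourier class $j$ corresponds to the $H^1_{(j;3)}$ condition via the last line of \ref{sph_symmetries}. Conformal invariance of the Dirichlet energy, together with \ref{sphop_intertwine}, turns the Jacobi form on one period into $\bop_{\Sph^2_3}$.

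\textbf{Index.} A class-$(\alpha,j)$ compactly supported test function on $\ks_3$ restricts to a period and pushes forward to an element of $H^1_{[\alpha],j}(\Sph^2_3)$ that vanishes near the six slit endpoints. Min-max then gives $\ind_{\alpha,j}(\ks_3)\le\ind_{\alpha,j}(\sphop_3,\Sph^2_3)$; this uses compactness from Rellich on $\Sph^2_3$, so that the spectrum is discrete. Conversely, a negative eigenspace is finite dimensional, and \ref{slit-density} approximates it, preserving negativity, by functions vanishing near $\pts{3}$ and smooth up to the equator from each side. These pull back to smooth functions on a period whose values and derivatives match across $\{z=0\}\sim\{z=2\pi\}$ up to the phase. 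I will use the paper's notion that smoothness across the junction means smoothness of the glued quasiperiodic function; this requires checking that the one-sided smooth extensions glue smoothly, and if not, I would mollify across the slits in the twisted sense as in the proof of \ref{slit-density}. Extending quasiperiodically gives admissible elements of $C^\infty_{0,\alpha,j}(\ks_3)$, since compact support in $x^2+y^2$ corresponds to vanishing near the ends.

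\textbf{Nullity.} Rationality of $\alpha$ is used here. With $\alpha=n/m$, a bounded Jacobi field in class $(\alpha,j)$ is $2m\pi$-periodic, so it descends to $\ks_{3,m}$. By \ref{jacobi_finiteness} it has finite energy, hence lies in $H^1_{[\alpha],j}(\Sph^2_3)$. Integrating by parts against test functions vanishing near $\pts{3}$, the boundary terms from $z=0$ and $z=2\pi$ cancel by quasiperiodicity. \ref{slit-removal} then extends the identity $\bop_{\Sph^2_3}(u,\cdot)=0$ to all of $H^1_{[\alpha],j}$. Conversely, a kernel element is smooth on $\Sph^2_3$ by elliptic regularity. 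Near interior points of the slits, the function equal to $u$ above and $e^{2\alpha\pi i}u$ below is a weak solution across the slit, hence smooth. Pulling back and extending quasiperiodically gives a Jacobi field on $\ks_{3,m}$ with finite energy, which is bounded by \ref{jacobi_finiteness}. The two maps are mutually inverse, so the dimensions agree.

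The main obstacle is the bookkeeping of the gluing: matching $\{z=0\}$ and $\{z=2\pi\}$ to the upper and lower sides of the slits with the correct phase and orientation, and ensuring the pulled-back approximants are genuinely smooth across the junction. Everything else copies the earlier lemmas.
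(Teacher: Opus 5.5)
Your proposal is correct and follows the same route as the paper. The paper likewise identifies $\ks_3^{(0,2\pi)}$ conformally with $\Sph^2_3$: $\{z=\pi\}$ goes to $\arcs{3}{1}$, and both $\{z=0\}$ and $\{z=2\pi\}$ go to the slits. It then reruns the argument of \ref{index-nullity-conf-comp-equiv} with \ref{slit-density} and \ref{slit-removal}, using rationality to descend to $\ks_{3,m}$ and invoke \ref{jacobi_finiteness} for the nullity.

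The phase direction you planned to check is already right as written. The boundary piece $\{z=0\}$ lands on the lower side and $\{z=2\pi\}$ on the upper side of each slit, so $u\circ\trans_z^{2\pi}=e^{2\alpha\pi i}u$ is exactly $u(+0,\cdot)=e^{2\alpha\pi i}u(-0,\cdot)$. Your check that the pulled-back approximants glue smoothly across the slits, up to the phase, addresses a detail the paper leaves implicit.
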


\begin{proof}
The map $\stereop^{-1} \circ \ewcover_3$
restricts to a conformal diffeomorphism
of $\ks_3^{(0,2\pi)}$ onto $\Sph^2_3$,
with
$
  (\stereop^{-1} \circ \ewcover_3)(\{z=\pi\} \cap \ks_3)
  =
  \{(0,\theta) \st \theta \in \arcs{3}{1}\}
$
and
$
  (\stereop^{-1} \circ \ewcover_3)(\{z=0\} \cap \ks_3)
  =
  (\stereop^{-1} \circ \ewcover_3)(\{z=2\pi\} \cap \ks_3)
  =
  \{(0,\theta) \st \theta \in \arcs{3}{0}\}
$.
The proof is completed along much the same lines
as that of \ref{index-nullity-conf-comp-equiv},
using  \ref{slit-density} and \ref{slit-removal}
in place of \ref{junction-density} and \ref{puncture-removal},
and using the fact that eigenfunctions of $\bop_{\Sph^2_3}$
in $H^1_{[n/m],~j}(\Sph^2_3)$
can be pulled back via $\stereop^{-1} \circ \ewcover_3$
and extended quasiperiodically
to elements of $C^\infty_{n/m, ~j}(\ks_3)$
(which descend to smooth functions on $\ks_{3,m}$,
and, as in \ref{index-nullity-conf-comp-equiv},
we appeal to \ref{jacobi_finiteness} for the nullity).
\end{proof}

\begin{proposition}
\label{k=3_bracket}
For each $\alpha \in (0,1) \cap \Q$ we have
\begin{equation*}
  \ind_{\alpha,0}(\ks_3) = 2
  \quad \mbox{and} \quad
  \nul_{\alpha,0}(\ks_3) = 0
\end{equation*}
and for each $j \in \{1,2\}$
\begin{equation*}
  \ind_{\alpha,j}(\ks_3) \geq 1
  \quad \mbox{and} \quad
  \ind_{\alpha,j}(\ks_3)
    +\nul_{\alpha,j}(\ks_3)
  \leq
  2.
\end{equation*}
\end{proposition}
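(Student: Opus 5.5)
The plan is to work throughout on the slit sphere, which \ref{slit-form-equiv} permits, and to compare the form $\bop_{\Sph^2_3}$ on $H^1_{[\alpha],j}(\Sph^2_3)$ with the half-period problems of \ref{D_and_N_half_period} and \ref{Zclasses}. The sphere is cut into two hemispheres, and $\rot_z^{2\pi/3}$-equivariance preserves every construction below, so each class $j$ can be handled separately.

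\emph{Upper bounds.} Restricting to the two hemispheres embeds $H^1_{[\alpha],j}(\Sph^2_3)$ into $H^1_{(j;3)}(\Sph^2_+) \oplus H^1_{(j;3)}(\Sph^2_-)$, where the form decouples into two copies of the Neumann form. Reflection through $\{z=0\}$ identifies the northern copy with the southern one. By \ref{index-nullity-conf-comp-equiv} and \ref{D_and_N_half_period}, the decoupled form in class $j$ has a negative space $N$ and a null space $Z$ of the following sizes:
\begin{itemize}
\item for $j=0$: $\dim N = 2$ and $Z=0$;
\item for $j=1,2$: $\dim N = 2$ and $\dim Z = 2$.
\end{itemize}
In the latter case $Z$ is spanned by $h_\pm$, the restrictions to the two hemispheres of the class-$j$ horizontal-translation Jacobi field. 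Suppose $X \leq H^1_{[\alpha],j}$ has dimension $3$ and satisfies $\bop \leq 0$ on $X$. Then $X$ meets the $\bop$-orthogonal complement of $N$ nontrivially. The form is nonnegative on that complement, so a nonzero $x$ there with $\bop(x,x) \leq 0$ lies in its radical, which is $Z$.
\begin{itemize}
\item For $j=0$ this is already a contradiction, since $Z=0$. Hence $\ind_{\alpha,0}+\nul_{\alpha,0} \leq 2$.
\item For $j=1,2$ it gives $x = c_+h_+ + c_-h_-$ satisfying the gluing conditions. The traces of $h_\pm$ on the equator are the same nowhere-vanishing multiple $u_{3,2}(0)e^{\mp 2i\theta}$ (by \ref{ode}). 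Continuity across $\arcs{3}{1}$ then forces $c_+=c_-$, while the phase condition on $\arcs{3}{0}$ forces $c_+ = e^{2\pi i\alpha}c_-$. Since $\alpha \notin \Z$, this gives $x=0$, a contradiction. Hence $\ind+\nul \leq 2$.
\end{itemize}

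\emph{Lower bound for $j=1,2$.} Take the negative class-$j$ Zaremba field on the southern hemisphere supplied by \ref{Zclasses} with $k=3$, together with \ref{DN-form-equiv} and \ref{z-reduction}. Its Dirichlet set is the image of $\{z=0\}$, namely $\arcs{3}{0}$. Extend it evenly to the northern hemisphere by $\refl_{\{z=0\}}$, which commutes with the rotation. The extension is continuous across $\arcs{3}{1}$ and vanishes from both sides on $\arcs{3}{0}$, so it satisfies the $[\alpha]$-condition for every $\alpha$. Its Jacobi form is twice the negative hemispherical value, so $\ind_{\alpha,j} \geq 1$.

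\emph{Lower bound for $j=0$.} Let $d_\pm$ be the Dirichlet eigenfunctions of \ref{hemi_first_D_eval} (the vertical-translation field), placed on each hemisphere and extended by zero. Their span $W$ lies in $H^1_{[\alpha],0}$ and is isotropic for $\bop$, since $\bop(d_+,d_-)=0$ by disjoint supports. I claim $W$ meets the radical trivially. A radical element would be a bounded $\alpha$-quasiperiodic Jacobi field on $\ks_3$ vanishing on all the planes $\{z=n\pi\}$; by unique continuation it must be a combination of the vertical-translation field on each half period. That field has nonzero normal derivative on the equator. Matching normal derivatives across $\arcs{3}{1}$ and, with the phase, across $\arcs{3}{0}$ gives two incompatible conditions, again because $\alpha\notin\Z$. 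An isotropic subspace of a form with finite index that meets the radical trivially forces $\ind \geq \dim W$. To see this, pick $v_\pm$ with $\bop(d_\pm, v_\mp)=0$ and $\bop(d_\pm,v_\pm)\neq 0$ (hyperbolic pairing), then perturb $d_\pm$ by $-\varepsilon v_\pm$ suitably. Hence $\ind_{\alpha,0} \geq 2$, and combined with the upper bound, $\ind_{\alpha,0}=2$ and $\nul_{\alpha,0}=0$.

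\emph{Main obstacle.} I expect the delicate step to be the $j=0$ lower bound. It requires justifying the radical computation, i.e.\ that normal derivatives in the weak sense must match, and carrying out the hyperbolic-pair perturbation rigorously in the infinite-dimensional setting, where \ref{slit-density} and \ref{slit-removal} supply the needed density. The upper-bound argument needs only the identification of the Neumann null space with the horizontal-translation fields, which follows from \ref{nop-sphop}.
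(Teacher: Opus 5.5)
Your argument is correct, and it reaches the statement by a somewhat different route than the paper.

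\textbf{The paper's route.} The paper brackets the $[\alpha]$ problem between the slit-sphere Dirichlet problem (vanishing on the slits) and the slit-sphere Neumann problem (no condition across the slits). It splits each of these by parity under $\refl_{\{z=0\}}$ into a hemispherical Dirichlet or Neumann problem plus the Zaremba problem of Section~\ref{Zaremba}. It then lowers the count by one, using a brief shared-eigenfunction argument: the relevant null field is $2\pi$-periodic. That field is vertical translation for $j=0$ on the Dirichlet side, and horizontal translation for $j=1,2$ on the Neumann side.

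\textbf{Your route.} For the upper bounds you compare instead with the fully decoupled pair of hemispherical Neumann problems. Its null space is larger for $j=1,2$ (two-dimensional), and you show directly from the two gluing conditions that it meets $H^1_{[\alpha],j}(\Sph^2_3)$ trivially. For $j=0$ you replace the paper's combination of a Zaremba negative direction and the Dirichlet null field by the two-dimensional isotropic span of $d_\pm$. This span meets the radical trivially precisely because $e^{2\pi i\alpha}\neq 1$.

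\textbf{What each buys.} Your upper bounds and your entire $j=0$ case rest only on \ref{D_and_N_half_period}, with no information about the Zaremba problem. The nullity-removing step is also made precise by elementary linear algebra: the codimension-two argument against $N^{\perp}$, and the isotropic-subspace bound. Only your $j=1,2$ lower bound uses the Zaremba analysis (\ref{Zclasses} for $k=3$), exactly as the paper does. The paper's route is in exchange more uniform, running everything through one bracketing scheme and the tables already assembled.

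\textbf{The step you flagged.} The step you called delicate is in fact routine. No unique continuation is needed, since an element of $W$ is by definition $c_+d_+ + c_-d_-$. Green's identity with the explicit smooth $d_\pm$ gives $\bop(d_\pm,v)$ as a fixed nonzero constant times $\int_{\Sph^1} v(\pm 0,\theta)\,d\theta$. The hyperbolic-pair perturbation can also be replaced by observing that projecting $W$ onto the negative space $N$ along $N^{\perp}$ is injective once $W$ meets the radical trivially.
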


\begin{proof}
We define for each $j \in \Z_3$
\begin{equation}
\begin{gathered}
  \begin{aligned}
  \ind_{\N,j}(\sphop_3, \Sph^2_3)
  :=
  \sum_{\lambda < 0}
  \dim
  \{
    &u \in H^1_{(j;3)}(\Sph^2_3)
    \st
    \forall v \in H^1_{(j;3)}(\Sph^2_3)
    \\
    &\;
    \bop_{\Sph^2_3}(u,v) = \lambda \langle u, v \rangle_{L^2(\Sph^2_3)}
  \},
  \end{aligned}
  \\
  \nul_{\N,j}(\sphop_3, \Sph^2_3)
  :=
  \dim
  \{
    u \in H^1_{(j;3)}(\Sph^2_3)
    \st
    \forall v \in H^1_{(j;3)}(\Sph^2_3)
    \;\,
    \bop_{\Sph^2_3}(u,v) = 0
  \},
\end{gathered}
\end{equation}
and also $\ind_{\D,j}(\sphop_3,\Sph^2_3)$
and $\nul_{\D,j}(\sphop_3,\Sph^2_3)$
by replacing each instance of $H^1_{(j;3)}(\Sph^2_3)$
by its intersection with $H^1_0(\Sph^2_3)$,
the closure in $H^1(\Sph^2_3)$
of the smooth functions with support disjoint
from $\partial\Sph^2_3 = \stereop^{-1}(e^{i\closure(\arcs{3}{0})})$.

By the min-max characterization of eigenvalues
we have for each $j \in \Z_3$ and $\alpha \in (0,1) \cap \Q$
\begin{equation}
\begin{gathered}
  \ind_{\D,j}(\sphop_3,\Sph^2_3)
    \leq
    \ind_{\alpha,j}(\sphop_3,\Sph^2_3)
    \leq
    \ind_{\N,j}(\sphop_3,\Sph^2_3)
  \\
  \begin{aligned}
  \ind_{\D,j}(\sphop_3,\Sph^2_3)
      +\nul_{\D,j}(\sphop_3,\Sph^2_3)
    &\leq
    \ind_{\alpha,j}(\sphop_3,\Sph^2_3)
      +\nul_{\alpha,j}(\sphop_3,\Sph^2_3)
    \\
    &\leq
    \ind_{\N,j}(\sphop_3,\Sph^2_3)
      +\nul_{\N,j}(\sphop_3,\Sph^2_3),
  \end{aligned}
\end{gathered}
\end{equation}
while, referring to
Sections \ref{fundamentals} and \ref{Zaremba},
we also have
\begin{equation}
\begin{gathered}
  \ind_{\D,j}(\sphop_3,\Sph^2_3)
    =
    \ind_{\D,j}(\ks_3^{(0,\pi)})
      +\ind_{\DN,j}(\ks_3^{(0,\pi)}),
  \\
  \ind_{\N,j}(\sphop_3,\Sph^2_3)
    =
    \ind_{\N,j}(\ks_3^{(0,\pi)})
      +\ind_{\DN,j}(\ks_3^{(0,\pi)}),
\end{gathered}
\end{equation}
and likewise with nullity in place of index.

In particular,
referring to \ref{D_and_N_half_period},
\ref{z-upper}, and \ref{Zclasses},
this establishes
\begin{equation}
\begin{gathered}
  \ind_{\alpha,0}(\ks_3) + \nul_{\alpha,0}(\ks_3) = 2,
  \\
  \forall j \in \Z_3
  \quad
  1 \leq \ind_{\alpha,j}(\ks_3) \leq 2,
  \\
  \forall j \in \{1,2\}
  \quad
  \ind_{\alpha,j}(\ks_3)
    +\nul_{\alpha,j}(\ks_3)
  \leq
  3
\end{gathered}
\end{equation}
for all $\alpha \in (0,1) \cap \Q$.
If $\nul_{\alpha,0}(\ks_3) \neq 0$,
then zero would be the second eigenvalue
of $-\sphop_3$ on $\Sph^2_3$
under both the Dirichlet condition
and the quasiperiodic boundary condition
with phase $e^{2\pi i \alpha} \neq 1$
across the slits
and the two problems would share a common such eigenfunction
(a Jacobi field),
but $\nul_{\D,0}(\sphop_3,\Sph^2_3) = 1$
is generated by
$2z/(1+z^2)$
(induced by vertical translations on $\ks_3$,
$z$ here being the height function on $\Sph^2 \subset \R^3$),
whose pullback to $\ks_3$ is periodic,
so has phase $1 \neq e^{2\pi i \alpha}$;
we conclude that $\nul_{\alpha,0}(\ks_3)=0$.
Similarly,
$\nul_{\N,1}(\sphop_3,\Sph^2_3) = 1$
and
$\nul_{\N,2}(\sphop_3,\Sph^2_3) = 1$
are generated by 
$(x \pm iy)^2/(1+z^2)$
(induced by horizontal translations on $\ks_3$,
$(x,y,z)$ here being the standard coordinates
on $\R^3 \supset \Sph^2$),
which also correspond to $2\pi$-periodic
functions on $\ks_3$,
so we conclude that
$-\sphop_3$ on $\Sph^2_3$
has strictly positive
third eigenvalue
under the quasiperiodic boundary condition
with phase $e^{2\pi i \alpha} \neq 1$,
completing the proof. 
\end{proof}

\subsection*{Reduction to the circle}

We define the sign pattern $\varsigma: \Sph^1 \to \{1,-1\}$,
the corresponding multiplier $\sgnmul: L^2(\Sph^1) \to L^2(\Sph^1)$
and, for each $\alpha \in (0,1)$,
the multiplier
$\mult_\alpha : L^2(\Sph^1) \to L^2(\Sph^1)$
by
\begin{equation}
\begin{gathered}
  \varsigma(\theta)
  :=
  \begin{cases}
    1 &\mbox{for } \theta \in \arcs{3}{1}
    \\
    -1 &\mbox{for } \theta \in \arcs{3}{0},
  \end{cases}
  \quad
  \sgnmul u := \varsigma u,
  \\
  \mult_\alpha u := e^{-i \varsigma \alpha \pi} u
    = (\cos \alpha \pi)\ident u - i(\sin \alpha \pi)\sgnmul u,
\end{gathered}
\end{equation}
where $\ident$ is the identity operator on $H^{-\infty}(\Sph^1)$.
Note that each $\mult_\alpha$
commutes with the (pullback) action
of $\theta \mapsto \theta-2\pi/3$.
For future reference we record the Fourier coefficients
\begin{equation}
\label{FT_varsigma}
  \widehat{\varsigma}(\ell)
  =
  \begin{cases}
    \dfrac{6(-1)^{(\abs{\ell}-3)/6}}{\pi\abs{\ell}}
      &\mbox{if } \ell \in 6\Z + 3,
    \\
    0
      &\mbox{otherwise.}
  \end{cases}
\end{equation}

On the circle we define the trace space
\begin{equation}
\begin{aligned}
  H^{1/2}_\varsigma(\Sph^1)
  &:=
  \{
    u \in H^{1/2}(\Sph^1)
    \st
    \varsigma u \in H^{1/2}(\Sph^1)
  \}
  \\
  &=
  H^{1/2}(\Sph^1) \cap \sgnmul H^{1/2}(\Sph^1),
\end{aligned}
\end{equation}
and we observe that for all $\alpha \in (0,1)$
\begin{equation}
\begin{aligned}
  H^{1/2}_\varsigma(\Sph^1)
  &=
  \{
    u \in H^{1/2}(\Sph^1)
    \st
    \mult_\alpha u \in H^{1/2}(\Sph^1)
  \}
  \\
  &=
  \{
    u(-0,\cdot)
    \st
    u \in H^1_{[\alpha]}(\Sph^2_3)
  \},
\end{aligned}
\end{equation}
while
\begin{equation}
  u(+0,\cdot)
  =
  e^{i\alpha\pi}\mult_\alpha u(-0,\cdot)
\end{equation}
for all $u \in H^1_{[\alpha]}(\Sph^2_3)$.
Note that $H^{1/2}_\varsigma(\Sph^1)$
is not a closed subspace of $H^{1/2}(\Sph^1)$;
it is instead a Hilbert space under the graph norm (for $\sgnmul$)
\begin{equation}
\label{sgnmul_norm}
  \norm{u}_{H^{1/2}_\varsigma(\Sph^1)}
  :=
  \sqrt{
  \norm{u}_{H^{1/2}(\Sph^1)}^2
    +\norm{\sgnmul u}_{H^{1/2}(\Sph^1)}^2
  }.
\end{equation}

We also define
\begin{equation}
  \dot{H}^{1/2}_{\varsigma}(\Sph^1)
  :=
  \{
    u \in \dot{H}^{1/2}(\Sph^1)
    \st
    \varsigma u \in \dot{H}^{1/2}(\Sph^1)
  \}
\end{equation}
and for each $j \in \{1,2\}$
\begin{equation}
  H^{1/2}_{\varsigma,(j;3)}(\Sph^1)
  :=
  H^{1/2}_\varsigma(\Sph^1) \cap H^{1/2}_{(j;3)}(\Sph^1)
  \subset
  \dot{H}^{1/2}_\varsigma(\Sph^1);
\end{equation}
we note that each $H^{1/2}_{\varsigma,(j;3)}$
is invariant under each $\mult_\alpha$.
Given also $\alpha \in (0,1)$, we further define
on
$
  \dot{H}^{1/2}_\varsigma(\Sph^1)
  \times
  \dot{H}^{1/2}_\varsigma(\Sph^1)
$
the Hermitian form
\begin{equation}
  \iop{\alpha}(u, v)
  :=
  \langle \nop_3 u, v \rangle_{L^2(\Sph^1)}
    +\langle \nop_3 \mult_\alpha u, \mult_\alpha v \rangle_{L^2(\Sph^1)}
\end{equation}
and for each $j \in \{1,2\}$ the index and nullity
\begin{equation}
\label{iop_ind_nul_def}
\begin{gathered}
  \ind_j(\iop{\alpha})
    :=
    \sup
    \{
      \dim X
      \st
      X \leq H^{1/2}_{\varsigma,(j;3)}(\Sph^1)
      \mbox{ and }
      \forall u \in X \setminus \{0\}
      \;\,
      \iop{\alpha}(u,u) < 0
    \},
  \\
  \nul_j(\iop{\alpha})
    :=
    \dim
    \{
      u \in H^{1/2}_{\varsigma,(j;3)}(\Sph^1)
      \st
      \forall v \in H^{1/2}_{\varsigma,(j;3)}(\Sph^1)
      \;\,
      \iop{\alpha}(u,v) = 0
    \}.
\end{gathered}
\end{equation}
Note that for each $j \in \{1,2\}$
\begin{equation}
  \ind_j(\iop{\alpha}) = \ind_{-j}(\iop{1-\alpha}),
  \quad
  \nul_j(\iop{\alpha}) = \nul_{-j}(\iop{1-\alpha}),
\end{equation}
as complex conjugation takes $H^{1/2}_{\varsigma,(j;3)}(\Sph^1)$
to $H^{1/2}_{\varsigma,(-j;3)}(\Sph^1)$,
carrying $\iop{\alpha}$ to $\iop{1-\alpha}$.

\begin{proposition}
\label{iop_ind_nul_equiv}
For each $\alpha \in (0,1) \cap \Q$ 
\begin{equation*}
\begin{gathered}
  \ind_{\alpha,1}(\ks_3)
    =
    \ind_{\alpha,2}(\ks_3)
    =
    \ind_1(\iop{\alpha})
    =
    \ind_2(\iop{\alpha}),
  \\
  \nul_{\alpha,1}(\ks_3)
    =
    \nul_{\alpha,2}(\ks_3)
    =
    \nul_1(\iop{\alpha})
    =
    \nul_2(\iop{\alpha}).
\end{gathered}
\end{equation*}
\end{proposition}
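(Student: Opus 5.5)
The plan is to run the block-diagonalization argument of \ref{nop-sphop} and \ref{z-reduction} on the slit sphere, treating the two hemispheres separately and gluing them through their traces.

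First, by \ref{slit-form-equiv} and \ref{j_sym} it suffices to prove, for each $j \in \{1,2\}$ and $\alpha \in (0,1)$, that the index and nullity of $\bop_{\Sph^2_3}$ on $H^1_{[\alpha],j}(\Sph^2_3)$ equal $\ind_j(\iop{\alpha})$ and $\nul_j(\iop{\alpha})$. The remaining equalities $\ind_1(\iop{\alpha}) = \ind_2(\iop{\alpha})$ and $\nul_1(\iop{\alpha}) = \nul_2(\iop{\alpha})$ then follow from the corresponding equalities on $\ks_3$. Alternatively, they follow directly from the reflection $\theta \mapsto -\theta$: it preserves $\varsigma$, commutes with $\nop_3$ and $\mult_\alpha$, and exchanges the classes $1$ and $2$.

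Second, I would split $u \in H^1_{[\alpha],j}(\Sph^2_3)$ into its restrictions to $\Sph^2_-$ and $\Sph^2_+$. Write $f := u(-0,\cdot)$ and $g := u(+0,\cdot) = e^{i\alpha\pi}\mult_\alpha f$. For $j \neq 0$ both $f$ and $g$ lie in $H^{1/2}_{(j;3)}(\Sph^1) \subset \dot{H}^{1/2}(\Sph^1)$, so in particular $f \in H^{1/2}_{\varsigma,(j;3)}(\Sph^1)$. The reflection $\refl_{\{z=0\}}$ carries $\Sph^2_+$ isometrically onto $\Sph^2_-$, fixes the equator pointwise and preserves $V^{\Sph^2}_3$. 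Conjugating $\jext_3$ by this reflection therefore gives a Jacobi extension on the northern hemisphere with the same Dirichlet-to-Neumann map $\nop_3$. I then decompose
\begin{equation*}
  u = \bigl(\jext_3 f \text{ on } \Sph^2_-, \; \jext_3^+ g \text{ on } \Sph^2_+\bigr) + (w_-, w_+),
  \qquad w_\pm \in H^1_0(\Sph^2_\pm) \cap H^1_{(j;3)},
\end{equation*}
where $\jext_3^+$ denotes the conjugated extension. The key check is that, conversely, for every $f \in H^{1/2}_{\varsigma,(j;3)}(\Sph^1)$ this pair of extensions defines an element of $H^1_{[\alpha],j}(\Sph^2_3)$. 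On $\arcs{3}{1}$ we have $\varsigma = 1$, so $e^{i\alpha\pi}\mult_\alpha f = f$ and the two traces agree; a function in $H^1$ of each hemisphere with matching traces lies in $H^1$ across those arcs. On $\arcs{3}{0}$ we have $\varsigma = -1$, so the upper trace equals $e^{2\alpha\pi i} f$, which is exactly the quasiperiodic condition. The decomposition is direct, because as in \ref{nop-sphop} a zero-trace solution in a class $j \neq 0$ vanishes.

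Third, applying \ref{first_block} on each hemisphere shows that $\bop_{\Sph^2_3}$ is block diagonal. The trace block is
\begin{equation*}
  \langle \nop_3 f, f\rangle_{L^2(\Sph^1)} + \langle \nop_3 (e^{i\alpha\pi}\mult_\alpha f), e^{i\alpha\pi}\mult_\alpha f\rangle_{L^2(\Sph^1)} = \iop{\alpha}(f,f),
\end{equation*}
since the unimodular constant $e^{i\alpha\pi}$ cancels in the Hermitian form. The zero-trace blocks are positive definite by \ref{D_and_N_half_period} and \ref{index-nullity-conf-comp-equiv}: all class-$j$ Dirichlet eigenvalues with $j \neq 0$ are positive, and the same holds on $\Sph^2_+$ by reflection. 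Consequently the index of $\bop_{\Sph^2_3}$ equals that of the trace block. Likewise, its kernel consists exactly of the Jacobi extensions of elements of the kernel of $\iop{\alpha}$ on $H^{1/2}_{\varsigma,(j;3)}(\Sph^1)$, which gives the nullity equality.

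The main obstacle I expect is functional-analytic. The space $H^{1/2}_{\varsigma,(j;3)}(\Sph^1)$ is not a closed subspace of $H^{1/2}(\Sph^1)$; it carries the graph norm \ref{sgnmul_norm}. One therefore has to check that $f \mapsto$ (glued extension) is a bounded isomorphism onto the trace component with that norm, using the boundedness of $\jext_3$ applied to both $f$ and $\mult_\alpha f$. One must also check that the min-max and kernel characterizations pass through this isomorphism without requiring closedness in $H^{1/2}$. I would verify this directly from \ref{DtN} and the trace theorem applied separately on each hemisphere.
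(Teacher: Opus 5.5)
Your proposal is correct and is essentially the paper's proof. You reduce via \ref{slit-form-equiv} to the slit sphere and write each $u \in H^1_{[\alpha],j}(\Sph^2_3)$, hemisphere by hemisphere, as the Jacobi extensions of the traces $u(-0,\cdot)$ and $u(+0,\cdot) = e^{i\alpha\pi}\mult_\alpha u(-0,\cdot)$ plus zero-trace remainders. You then identify the trace block with $\iop{\alpha}$ via \ref{first_block}, discard the remainder blocks by the positivity of the class-$j$ Dirichlet problem from \ref{D_and_N_half_period}, and use $\theta \mapsto -\theta$ for the equality between classes $1$ and $2$. The functional-analytic obstacle you anticipate is milder than you fear. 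The decomposition is a linear bijection that block-diagonalizes the form with a positive definite complementary block. The slit-sphere index, converted by min-max to a supremum of dimensions of negative definite subspaces of $H^1_{[\alpha],j}(\Sph^2_3)$, and the nullity (the dimension of the radical) therefore transfer algebraically. No closedness of $H^{1/2}_{\varsigma,(j;3)}(\Sph^1)$ in $H^{1/2}(\Sph^1)$ is needed.
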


\begin{proof}
We have already observed the first equality on each line;
the final equality on each line follows similarly
from the fact that $\theta \mapsto -\theta$
takes $H^{1/2}_{\varsigma,(1;3)}(\Sph^1)$
to $H^{1/2}_{\varsigma,(2;3)}(\Sph^1)$
and preserves $\iop{\alpha}$.
In view of \ref{slit-form-equiv}
to complete the proof it suffices to establish
\begin{equation}
  \ind_{\alpha,1}(\sphop_3,\Sph^2_3)
  =
  \ind_1(\iop{\alpha}),
  \quad
  \nul_{\alpha,1}(\sphop_3,\Sph^2_3)
  =
  \nul_1(\iop{\alpha}).
\end{equation}

Arguing much as in the proofs of
\ref{nop-sphop} and \ref{z-reduction},
we observe that
for all $u \in H^1_{[\alpha],1}(\Sph^2_3)$
\begin{equation}
\label{slit_bop_blocks}
\begin{aligned}
  \bop_{\Sph^2_3}(u,u)
  &=
  \bop_3(u|_{\Sph^2_-}, u|_{\Sph^2_-})
    +\bop_3(\refl_{\{z=0\}}^*u|_{\Sph^2_+},\refl_{\{z=0\}}^*u|_{\Sph^2_+})
  \\
  &=
  \iop{\alpha}(u(-0,\cdot),u(-0,\cdot))
  \\
    &\quad
    +\bop_3
      (
        u|_{\Sph^2_-}-\jext_3u(-0,\cdot), ~
        u|_{\Sph^2_-}-\jext_3u(-0,\cdot)
      )
  \\
    &\quad
    +\bop_3
      (
        \refl_{\{z=0\}}^*u|_{\Sph^2_+}-\jext_3u(+0,\cdot), ~
        \refl_{\{z=0\}}^*u|_{\Sph^2_+}-\jext_3u(+0,\cdot)
      )
\end{aligned}
\end{equation}
recalling in particular \ref{DtN}.
According to \ref{D_and_N_half_period},
the first Dirichlet eigenvalue of $\sphop_3$
on $H^1_{(1;3)}(\Sph^2_-)$ is strictly positive,
and so
the underlying decomposition
\begin{equation}
\begin{gathered}
  H^1_{[\alpha],1}(\Sph^2_3)
  \to
    H^{1/2}_{\varsigma,(1;3)}(\Sph^1)
    \oplus
    H^1_{0,(1;3)}(\Sph^2_-)
    \oplus
    H^1_{0,(1;3)}(\Sph^2_-)
  \\
  u
  \mapsto
    (
      u(-0,\cdot), ~
      u|_{\Sph^2_-} - \jext_3u(-0,\cdot), ~
      \refl_{\{z=0\}}^*u|_{\Sph^2_+} - \jext_3u(+0,\cdot)
    )
\end{gathered}
\end{equation}
is an isomorphism
and each of the last two blocks of  
\ref{slit_bop_blocks}
is positive definite,
ending the proof.
\end{proof}

\subsection*{Frequency parity}

To complete the evaluation of
$\ind_\alpha(\ks_3)$
and $\nul_\alpha(\ks_3)$,
for each $\alpha \in (0,1) \cap \Q$,
it suffices to compute
$\ind_2(\iop{\alpha})$ and $\nul_2(\iop{\alpha})$.
We define
\begin{equation}
\begin{gathered}
  H^s_{(+,2;3)}(\Sph^1)
    :=
    \{
      u \in H^s(\Sph^1)
      \st
      \supp \widehat{u} \subseteq 1+6\Z
    \}
    \quad (s \in \R),
  \\
  H^s_{(-,2;3)}(\Sph^1)
    :=
     \{
      u \in H^s(\Sph^1)
      \st
      \supp \widehat{u} \subseteq 4+6\Z
    \}
    \quad (s \in \R),
\end{gathered}
\end{equation}
the two eigenspaces of $\theta \mapsto \theta - \pi/3$
in $H^s_{(2;3)}(\Sph^1)$,
and
\begin{equation}
  H^{1/2}_{\varsigma,(\pm,2;3)}(\Sph^1)
    :=
    H^{1/2}_\varsigma(\Sph^1)
    \cap
    H^{1/2}_{(\pm,2;3)}(\Sph^1),
\end{equation}
so that
\begin{equation}
\begin{gathered}
  H^s_{(2;3)}(\Sph^1)
  =
  H^s_{(+,2;3)} \oplus H^s_{(-,2;3)}
  \quad
  (s \in \R),
  \\
  \sgnmul H^0_{(\pm,2;3)}(\Sph^1)
  =
  H^0_{(\mp,2;3)}(\Sph^1),
  \quad
  \sgnmul H^{1/2}_{\varsigma,(\pm,2;3)}(\Sph^1)
  =
  H^{1/2}_{\varsigma,(\mp,2;3)}(\Sph^1),
\end{gathered}
\end{equation}
and we define the Fourier multiplier
$\parity: H^{-\infty}_{(2;3)}(\Sph^1) \to H^{-\infty}_{(2;3)}(\Sph^1)$ by
\begin{equation}
  \widehat{\parity u}(\ell)
  :=
  \begin{cases}
    \widehat{u}(\ell) &\mbox{for } \ell \in 1+6\Z
    \\
    -\widehat{u}(\ell) &\mbox{for } \ell \in 4+6\Z.
  \end{cases}
\end{equation}

Note that
\begin{equation}
\begin{aligned}
  \parity \sgnmul = -\sgnmul \parity
  \quad &\mbox{on } H^0_{(2;3)}(\Sph^1),
  \\
  \parity \nop_3 = \nop_3 \parity
  \quad &\mbox{on } H^{1/2}_{(2;3)}(\Sph^1),
\end{aligned}
\end{equation}
and
for each $\alpha \in (0,1)$
\begin{equation}
\begin{aligned}
  \parity \mult_\alpha
  &=
  \parity((\cos \alpha\pi)\ident - i(\sin \alpha\pi)\sgnmul) 
  \\
  &=
  ((\cos \alpha\pi)\ident + i(\sin \alpha \pi)\sgnmul)\parity
  \\
  &=
  -\mult_{1-\alpha} \parity.
\end{aligned}
\end{equation}

Next we define
$\nop_3^+ := \nop_3\big|_{H^{1/2}_{\varsigma,(+,2;3)}(\Sph^1)}$
and
$\nop_3^- := \nop_3\big|_{H^{1/2}_{\varsigma,(-,2;3)}(\Sph^1)}$,
so that
\begin{equation}
\label{DtN_3pm-momentum}
\begin{gathered}
  \widehat{\nop_3^+ u}(\ell)
  =
  \begin{cases}
    \sigma_3(\ell)\widehat{u}(\ell)
      &\mbox{if } \ell \in 1 + 6\Z
    \\
    0
      &\mbox{otherwise,}
  \end{cases}
  \\
  \widehat{\nop_3^- v}(\ell)
  =
  \begin{cases}
    \sigma_3(\ell)\widehat{v}(\ell)
      &\mbox{if } \ell \in 4 + 6\Z
      \\
    0
      &\mbox{otherwise,}
  \end{cases}
  \\
  \text{where, recalling \ref{DtN-momentum},}
  \quad
  \sigma_3(\ell)
  :=
  \frac{\ell^2-4}{\abs{\ell}}.
\end{gathered}
\end{equation}

We also define, for each $s>0$,
on
$
  H^{1/2}_{\varsigma,(+,2;3)}(\Sph^1)
  \times
  H^{1/2}_{\varsigma,(+,2;3)}(\Sph^1)
$
the Hermitian form
\begin{equation}
  \qop_s(u,v)
  :=
  \langle
    \nop_3^+u,
    v
  \rangle_{L^2(\Sph^1)}
  +
  s^2
  \langle
    \nop_3^-\sgnmul u,
    \sgnmul v
  \rangle_{L^2(\Sph^1)}
\end{equation}
and the index and nullity
\begin{equation}
\begin{gathered}
  \ind(\qop_s)
    :=
    \sup
    \{
      \dim X
      \st
      X \leq H^{1/2}_{\varsigma,(+,2;3)}(\Sph^1)
      \mbox{ and }
      \forall u \in X \setminus \{0\}
      \;\,
      \qop_s(u,u) < 0
    \},
  \\
  \nul(\qop_s)
    :=
    \dim
    \{
      u \in H^{1/2}_{\varsigma,(+,2;3)}(\Sph^1)
      \st
      \forall v \in H^{1/2}_{\varsigma,(+,2;3)}(\Sph^1)
      \;\,
      \qop_s(u,v) = 0
    \}.
\end{gathered}
\end{equation}
Shortly, in \ref{iop-qop_equiv},
we will see that our problem reduces
to the determination of the index and nullity of
$\qop_s$.

\begin{lemma}
\label{qop_ind_plus_nul_above}
For each $s > 0$
$
  \ind(\qop_s) + \nul(\qop_s)
  \leq
  1
$.
\end{lemma}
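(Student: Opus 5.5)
The plan is to show that every subspace $X \leq H^{1/2}_{\varsigma,(+,2;3)}(\Sph^1)$ on which $\qop_s$ is nonpositive, meaning $\qop_s(u,u) \leq 0$ for all $u \in X$, has $\dim X \leq 1$. This suffices by a standard observation. Let $X_-$ be a subspace realizing (or approximating) $\ind(\qop_s)$ and let $K$ be the null space. Then $X_- \cap K = 0$, since a nonzero $u \in K$ has $\qop_s(u,u) = 0$. Moreover, for $a \in X_-$ and $\kappa \in K$ we have $\qop_s(a+\kappa,a+\kappa) = \qop_s(a,a) \leq 0$, because $\qop_s(\kappa,\cdot) = 0$. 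Hence $X_- \oplus K$ is a nonpositive subspace of dimension $\ind(\qop_s)+\nul(\qop_s)$. If that sum exceeded $1$, we could choose finite-dimensional pieces to obtain a two-dimensional nonpositive subspace.

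The key step is a direct sign analysis of the symbol $\sigma_3(\ell) = (\ell^2-4)/\abs{\ell}$ on the two residue classes appearing in \ref{DtN_3pm-momentum}.

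\begin{itemize}
\item On $1+6\Z$ the only nonpositive value is $\sigma_3(1) = -3$. The other values are positive, since $\abs{\ell} \geq 5$ there.
\item On $4+6\Z$ the values are all nonnegative: $\sigma_3(-2) = 0$, and $\sigma_3(\ell) > 0$ for $\abs{\ell} \geq 4$.
\end{itemize}

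Consequently, for $u \in H^{1/2}_{\varsigma,(+,2;3)}(\Sph^1)$,
\begin{equation*}
  \frac{1}{2\pi}\qop_s(u,u)
  \geq
  -3\abs{\widehat{u}(1)}^2
  + \sum_{\ell \in (1+6\Z)\setminus\{1\}} \sigma_3(\ell)\abs{\widehat{u}(\ell)}^2 .
\end{equation*}
Here the $s^2$ term has been dropped because it is nonnegative. Recall that $\sgnmul u \in H^{1/2}_{\varsigma,(-,2;3)}(\Sph^1)$ has Fourier support in $4+6\Z$, so only $\sigma_3$ restricted to $4+6\Z$ enters that term.

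Now suppose $\dim X = 2$ with $X$ nonpositive. The linear functional $u \mapsto \widehat{u}(1)$ has a nonzero kernel element $u \in X$. For this $u$ the displayed bound gives $\qop_s(u,u) \geq 2\pi\sum_{\ell \neq 1}\sigma_3(\ell)\abs{\widehat u(\ell)}^2$, with every coefficient strictly positive. Since $\widehat{u}$ is supported in $1+6\Z$ and vanishes at $1$, while $u \neq 0$, this sum is strictly positive. That contradicts nonpositivity.

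I expect no real obstacle. The only points to check carefully are the observation that the zero mode $\sigma_3(-2)=0$ lies in the $4+6\Z$ class and so only affects the nonnegative $s^2$ term, and the bookkeeping relating index plus nullity to nonpositive subspaces.
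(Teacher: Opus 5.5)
Your proposal is correct and follows the paper's argument: in any two-dimensional subspace you pick a nonzero $u$ with $\widehat{u}(1)=0$, drop the $s^2$ term (nonnegative because the symbol is nonnegative on $4+6\Z$), and use $\sigma_3(\ell)\geq\sigma_3(-5)>0$ on $(1+6\Z)\setminus\{1\}$ to get $\qop_s(u,u)>0$. The only addition is that you spell out the standard reduction from $\ind(\qop_s)+\nul(\qop_s)$ to the maximal dimension of a nonpositive subspace, which the paper leaves implicit, and you do it correctly.
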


\begin{proof}
If $X \leq H^{1/2}_{\varsigma,(+,2;3)}(\Sph^1)$
has $\dim X = 2$,
then there is a nonzero $u \in X$
with $\widehat{u}(1) = 0$,
and so, making use of \ref{DtN_3pm-momentum},
\begin{equation}
  \qop_s(u,u)
  \geq
  \langle
    \nop_3^+u,
    u
  \rangle_{L^2(\Sph^1)}
  \geq
  \sigma_3(-5)\norm{u}_{L^2(\Sph^1)}^2
  >
  0.
\end{equation}
\end{proof}

Now, for each $\alpha \in (0,1)$ we define the operator
\begin{equation}
  \rop_\alpha
  :=
  \mult_{1-\alpha} \parity
    \Big|_{H^{1/2}_{\varsigma,(2;3)}(\Sph^1)}
\end{equation}
(induced by the action of the symmetry $\rot_z^{\pi/3}\trans_z^\pi$
on the original surface),
whose eigenspaces we will use to split $\iop{\alpha}$
and accomplish the reduction to $\qop_s$.

\begin{lemma}
\label{rop_props}
For each $\alpha \in (0,1)$
$\rop_\alpha$ is an involution
on $H^{1/2}_{\varsigma,(2;3)}(\Sph^1)$,
and for all $u,v \in H^{1/2}_{\varsigma,(2;3)}(\Sph^1)$
\begin{equation*}
\begin{gathered}
  \langle \rop_\alpha u, \rop_\alpha v \rangle_{L^2(\Sph^1)}
    =
  \langle u, v \rangle_{L^2(\Sph^1)},
  \\
  \iop{\alpha}(\rop_\alpha u, \rop_\alpha v)
    =
    \iop{\alpha}(u,v).
\end{gathered}
\end{equation*}
\end{lemma}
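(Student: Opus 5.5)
The plan is to verify all three assertions by direct algebra with the operator identities recorded just before the statement:
\begin{equation*}
  \parity \sgnmul = -\sgnmul \parity,
  \qquad
  \parity \nop_3 = \nop_3 \parity,
  \qquad
  \parity \mult_\beta = -\mult_{1-\beta} \parity .
\end{equation*}
We also use that $\mult_\beta$ is multiplication by $e^{-i\varsigma\beta\pi}$, and that $\parity$ is a Fourier multiplier with symbol $\pm 1$.

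\emph{Step 1: $\rop_\alpha$ maps $H^{1/2}_{\varsigma,(2;3)}(\Sph^1)$ to itself.}
Being a Fourier multiplier supported compatibly with $1+6\Z$ and $4+6\Z$, $\parity$ preserves $H^{1/2}_{(2;3)}(\Sph^1)$. If $u$ and $\sgnmul u$ lie in $H^{1/2}$, then $\sgnmul \parity u = -\parity \sgnmul u \in H^{1/2}$. Hence $\parity$ preserves $H^{1/2}_{\varsigma,(2;3)}(\Sph^1)$. The multiplier $\mult_{1-\alpha} = (\cos)\ident - i(\sin)\sgnmul$ also preserves this space: it commutes with the $\Z_3$ action, and $\sgnmul^2 = \ident$ shows that $\sgnmul \mult_{1-\alpha} u \in H^{1/2}$.

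\emph{Step 2: involution.}
Applying $\parity \mult_{1-\alpha} = -\mult_\alpha \parity$ and $\parity^2 = \ident$, we get
\begin{equation*}
  \rop_\alpha^2
  =
  \mult_{1-\alpha}\parity\mult_{1-\alpha}\parity
  =
  -\mult_{1-\alpha}\mult_\alpha .
\end{equation*}
The product $\mult_{1-\alpha}\mult_\alpha$ is multiplication by $e^{-i\varsigma\pi} = -1$, since $\varsigma = \pm 1$. Therefore $\rop_\alpha^2 = \ident$.

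\emph{Step 3: $L^2$ isometry.}
Both factors are unitary on $L^2(\Sph^1)$. For $\mult_{1-\alpha}$ this holds because it is multiplication by a unimodular function; for $\parity$ it holds by Parseval, since its symbol is $\pm 1$. The isometry follows.

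\emph{Step 4: invariance of $\iop{\alpha}$.}
This is the only step requiring care, because the two terms of $\iop{\alpha}$ are swapped by $\rop_\alpha$. For the second term we have $\mult_\alpha \rop_\alpha = \mult_\alpha \mult_{1-\alpha} \parity = -\parity$. Using that $\parity$ commutes with $\nop_3$ and is unitary, this gives
\begin{equation*}
  \langle \nop_3 \mult_\alpha \rop_\alpha u, \mult_\alpha \rop_\alpha v\rangle
  =
  \langle \nop_3 \parity u, \parity v\rangle
  =
  \langle \nop_3 u, v\rangle .
\end{equation*}
For the first term we have $\rop_\alpha = \mult_{1-\alpha}\parity = -\parity \mult_\alpha$, and therefore
\begin{equation*}
  \langle \nop_3 \rop_\alpha u, \rop_\alpha v\rangle
  =
  \langle \nop_3 \parity \mult_\alpha u, \parity \mult_\alpha v\rangle
  =
  \langle \nop_3 \mult_\alpha u, \mult_\alpha v\rangle .
\end{equation*}
Summing the two identities yields $\iop{\alpha}(\rop_\alpha u, \rop_\alpha v) = \iop{\alpha}(u,v)$.

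The one point to check is that all these pairings are legitimate. Every function involved lies in $H^{1/2}_{(2;3)}(\Sph^1) \subset \dot H^{1/2}(\Sph^1)$, where $\nop_3$ is defined and bounded and the commutation $\parity\nop_3 = \nop_3\parity$ holds. The identity $\parity\mult_\beta = -\mult_{1-\beta}\parity$ was stated on $H^0_{(2;3)}$, but it applies here because our functions lie in that space.
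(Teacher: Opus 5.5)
Your proposal is correct and follows the paper's proof essentially line for line. The involution, isometry, and invariance computations all rest on $\parity\mult_\beta = -\mult_{1-\beta}\parity$, on the unitarity of $\mult_{1-\alpha}$ and $\parity$, and on $\parity\nop_3 = \nop_3\parity$, with the two terms of $\iop{\alpha}$ swapped exactly as in the paper. Your Step~1 (that $\rop_\alpha$ preserves $H^{1/2}_{\varsigma,(2;3)}(\Sph^1)$) and your explicit observation $\mult_{1-\alpha}\mult_\alpha = -\ident$ fill in details the paper leaves implicit.
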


\begin{proof}
First we compute
\begin{equation}
  \rop_\alpha^2
  =
  \mult_{1-\alpha} \parity \, \mult_{1-\alpha}\parity
  =
  \mult_{1-\alpha} (-\mult_\alpha \parity) \parity 
  =
  \ident.
\end{equation}
Next,
for any $u,v \in H^{1/2}_{\varsigma,(2;3)}(\Sph^1)$
\begin{equation}
  \langle \rop_\alpha u, \rop_\alpha v \rangle_{L^2(\Sph^1)}
  =
  \langle \mult_{1-\alpha}\parity u, \mult_{1-\alpha}\parity v \rangle_{L^2(\Sph^1)}
  =
  \langle \parity u, \parity v \rangle_{L^2(\Sph^1)}
  =
  \langle u, v \rangle_{L^2(\Sph^1)},
\end{equation}
and finally
\begin{equation}
\begin{aligned}
  \iop{\alpha}(\rop_\alpha u, \rop_\alpha v)
  &=
  \langle \nop_3 \rop_\alpha u, \rop_\alpha v \rangle_{L^2(\Sph^1)}
    +\langle
      \nop_3\mult_\alpha \rop_\alpha u,
      \mult_\alpha \rop_\alpha v
    \rangle_{L^2(\Sph^1)}
  \\
  &=
  \langle
    \nop_3 \mult_{1-\alpha} \parity u, ~
    \mult_{1-\alpha} \parity v
  \rangle_{L^2(\Sph^1)}
  \\
  &\qquad
  +\langle
    \nop_3 \mult_\alpha \mult_{1-\alpha} \parity u, ~
    \mult_\alpha \mult_{1-\alpha} \parity v
  \rangle_{L^2(\Sph^1)}
  \\
  &=
  \langle
    \parity \nop_3 \mult_\alpha u,
    \parity \mult_\alpha v
  \rangle_{L^2(\Sph^1)}
  +
  \langle
    \parity \nop_3 u,
    \parity v
  \rangle_{L^2(\Sph^1)}
  \\
  &=
  \langle
    \nop_3 \mult_\alpha u,
    \mult_\alpha v
  \rangle_{L^2(\Sph^1)}
  +
  \langle
    \nop_3 u,
    v
  \rangle_{L^2(\Sph^1)}
  \\
  &=
  \iop{\alpha}(u,v).
\end{aligned}
\end{equation}
\end{proof}

In particular,
for each $\alpha \in (0,1)$,
we have the $L^2(\Sph^1)$-orthogonal direct sum decomposition
\begin{equation}
\label{rop_decomp}
  H^{1/2}_{\varsigma,(2;3)}(\Sph^1)
  =
  \ker (\rop_\alpha-\ident)
    \oplus
    \ker (\rop_\alpha+\ident).
\end{equation}

We define
$\ind_2^\pm(\iop{\alpha})$
and $\nul_2^\pm(\iop{\alpha})$
(four definitions in total)
by replacing
$H^{1/2}_{\varsigma,(2;3)}(\Sph^1)$
in \ref{iop_ind_nul_def}
by $\ker (\rop_\alpha \mp \ident)$.

\begin{lemma}
\label{iop_rop_decomp}
For each $\alpha \in (0,1)$
\begin{equation*}
\begin{gathered}
  \ind_2(\iop{\alpha})
  =
  \ind_2^+(\iop{\alpha}) + \ind_2^-(\iop{\alpha}),
  \\
  \nul_2(\iop{\alpha})
  =
  \nul_2^+(\iop{\alpha}) + \nul_2^-(\iop{\alpha}).
\end{gathered}
\end{equation*}
\end{lemma}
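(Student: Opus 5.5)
The plan is to show that the decomposition \ref{rop_decomp} is orthogonal not only for the $L^2(\Sph^1)$ inner product but also for the Hermitian form $\iop{\alpha}$. Once that is known, the index and nullity split additively over the two summands by standard linear algebra.

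The orthogonality comes from \ref{rop_props}. Take $u \in \ker(\rop_\alpha-\ident)$ and $v \in \ker(\rop_\alpha+\ident)$. Then
\begin{equation*}
  \iop{\alpha}(u,v) = \iop{\alpha}(\rop_\alpha u, \rop_\alpha v) = \iop{\alpha}(u,-v) = -\iop{\alpha}(u,v),
\end{equation*}
so $\iop{\alpha}(u,v)=0$. Two further facts are needed.
\begin{itemize}
\item The projections $\tfrac12(\ident\pm\rop_\alpha)$ are bounded on $H^{1/2}_{\varsigma,(2;3)}(\Sph^1)$ with its graph norm \ref{sgnmul_norm}. This holds because $\mult_{1-\alpha}$ is a combination of $\ident$ and $\sgnmul$, and $\parity$ commutes with $\sgnmul$ up to sign. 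Hence both summands are closed and the splitting $u=u_++u_-$ is topological.
\item The form $\iop{\alpha}$ is bounded on this space.
\end{itemize}

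For the index, first take $X_\pm \leq \ker(\rop_\alpha\mp\ident)$ on which $\iop{\alpha}$ is negative definite. The direct sum $X_+\oplus X_-$ is then negative definite, since
\begin{equation*}
  \iop{\alpha}(u_++u_-,u_++u_-) = \iop{\alpha}(u_+,u_+) + \iop{\alpha}(u_-,u_-),
\end{equation*}
which gives $\ind_2 \geq \ind_2^+ + \ind_2^-$.

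For the reverse inequality, suppose $\ind_2^\pm$ are finite (otherwise there is nothing to prove). Let $X$ be negative definite of dimension exceeding $\ind_2^++\ind_2^-$. Choose maximal negative definite subspaces $Y_\pm$ in each summand. The $\iop{\alpha}$-orthogonal complement $Y_\pm^{\perp}$ of $Y_\pm$ inside $\ker(\rop_\alpha\mp\ident)$ has finite codimension, and $\iop{\alpha}$ is nonnegative on it by maximality. Imposing the finitely many linear conditions that $u_\pm$ lie in $Y_\pm^\perp$ yields a nonzero $u\in X$ with $u_\pm\in Y_\pm^\perp$, hence $\iop{\alpha}(u,u)\geq 0$, a contradiction. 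Here the complement has codimension at most $\dim Y_\pm$, as the kernel of finitely many functionals. Nonnegativity there holds because if $w\in Y_\pm^\perp$ had $\iop{\alpha}(w,w)<0$, then $Y_\pm\oplus\C w$ would be negative definite, contradicting maximality.

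For the nullity, the radical of $\iop{\alpha}$ on the full space is exactly the direct sum of the radicals on the two summands. Indeed, $\iop{\alpha}(u,v)=\iop{\alpha}(u_+,v_+)+\iop{\alpha}(u_-,v_-)$, and $v_\pm$ range independently over the summands. So $u$ is null if and only if each $u_\pm$ is null in its summand.

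No step should be a real obstacle. The only point needing care is the continuity of the projections in the graph norm, so that the decomposition is a genuine splitting of the space on which $\ind_2$ and $\nul_2$ are defined.
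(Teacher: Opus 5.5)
Your proof is correct, and its skeleton matches the paper's. The $\rop_\alpha$-invariance in \ref{rop_props} makes the splitting \ref{rop_decomp} orthogonal for $\iop{\alpha}$. From that, both treatments get the nullity equality and the inequality $\ind_2(\iop{\alpha}) \geq \ind_2^+(\iop{\alpha}) + \ind_2^-(\iop{\alpha})$ at once.

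The two arguments differ only in the reverse inequality. The paper takes a finite-dimensional negative definite $X$ and passes to the $\rop_\alpha$-invariant space $X + \rop_\alpha X$. It then applies the finite-dimensional spectral theorem to the Hermitian map representing $\iop{\alpha}$ there via $L^2(\Sph^1)$. That map commutes with $\rop_\alpha$, which is where the $L^2$-isometry part of \ref{rop_props} is used. Hence its negative eigenspace, of dimension at least $\dim X$, splits along the two eigenspaces of $\rop_\alpha$.

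You instead take maximal negative definite subspaces $Y_\pm$ of the summands and count codimensions of their $\iop{\alpha}$-orthogonal complements. Your route is purely algebraic and proves the general fact that the index is additive over any $\iop{\alpha}$-orthogonal direct sum, with no reference to the $L^2$ structure. The paper's route is shorter given the spectral theorem, but it needs $\rop_\alpha$ to be compatible with $L^2$.

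Finally, the points you single out as needing care are never used. These are the continuity of the projections $\tfrac{1}{2}(\ident \pm \rop_\alpha)$ in the graph norm and the boundedness of $\iop{\alpha}$. The index (a supremum over subspaces with a pointwise sign condition) and the nullity (the dimension of a radical) are both algebraic notions. Your argument uses only the algebraic splitting $u = u_+ + u_-$ and the $\iop{\alpha}$-orthogonality.
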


\begin{proof}
From the final conclusion of \ref{rop_props}
we see that $\iop{\alpha}$ is block diagonal
with respect to \ref{rop_decomp}.
The nullity equality follows immediately,
as does the index relation
with $\geq$ in place of $=$.
To verify the reverse inequality
let $X \leq H^{1/2}_{\varsigma,(2;3)}(\Sph^1)$
be a finite-dimensional subspace
on which $\iop{\alpha}$ is negative definite.
We then obtain that the right-hand side is at least $\dim X$
by applying, as justified by \ref{rop_props},
the finite-dimensional spectral theorem
to the Hermitian linear map associated
via $L^2(\Sph^1)$
to $\iop{\alpha}$
restricted to $X + \rop_\alpha X$,
noting that this map has $\rop_\alpha$-invariant domain
and commutes with $\rop_\alpha$.
As this inequality holds for all such $X$, the proof is complete.
\end{proof}

Given any $s \in \R$ and $u \in H^s_{(2;3)}(\Sph^1)$,
we write
\begin{equation}
  u^{\parity,\pm}
  :=
  \frac{1}{2}(\ident \pm \parity)u
  \in
  H^s_{(\pm,2;3)}(\Sph^1)
\end{equation}
for its projections onto the $\pm 1$-eigenspaces of $\parity$.

\begin{proposition}
\label{iop-qop_equiv}
For each $\alpha \in (0,1)$
\begin{enumerate}
\item
  \label{rop_parity_plus_iso}
  each of the two maps
  $
    \ker (\rop_\alpha \pm \ident)
    \ni
    u
    \mapsto
    u^{\parity,+}
    \in
    H^{1/2}_{\varsigma,{(+,2;3)}}(\Sph^1)
  $
  is an isomorphism,
  
  \item
  \label{iop-qop_rop_plus}
  for all $u \in \ker (\rop_\alpha + \ident)$
  we have
  $
    \iop{\alpha}(u,u)
      =
      2\qop_{\tan \tfrac{\alpha\pi}{2}}(u^{\parity,+},u^{\parity,+})
  $,

\item
\label{iop-qop_rop_minus}
  for all $u \in \ker (\rop_\alpha - \ident)$
  we have
  $
    \iop{\alpha}(u,u)
      =
      2\qop_{\cot \tfrac{\alpha\pi}{2}}(u^{\parity,+},u^{\parity,+})
  $,

  \item
  \label{iop-qop_ind}
  $
    \ind_2(\iop{\alpha})
    =
    \ind(\qop_{\tan (\alpha\pi/2)})
      +\ind(\qop_{\cot (\alpha\pi/2)})
  $,
  and

  \item
  \label{iop-qop_nul}
  $
    \nul_2(\iop{\alpha})
    =
    \nul(\qop_{\tan (\alpha\pi/2)})
      +\nul(\qop_{\cot (\alpha\pi/2)})
  $.
\end{enumerate}
\end{proposition}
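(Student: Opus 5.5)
The plan is to solve the eigenvalue equation $\rop_\alpha u = \varepsilon u$ ($\varepsilon = \pm 1$) explicitly in terms of the parity components $u^{\parity,\pm}$. Then compute $\iop{\alpha}$ on each eigenspace, using the identity $\mult_\alpha\mult_{1-\alpha} = e^{-i\varsigma\pi} = -\ident$. Items (iv) and (v) will then follow from \ref{iop_rop_decomp}.

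\emph{Step 1: solving for the odd-parity part.} Write $u = u^{\parity,+} + u^{\parity,-}$, so that $\parity u = u^{\parity,+} - u^{\parity,-}$. We have
\begin{equation*}
  \mult_{1-\alpha} = -(\cos\alpha\pi)\ident - i(\sin\alpha\pi)\sgnmul ,
\end{equation*}
and $\sgnmul$ exchanges the two parity classes. Projecting $\rop_\alpha u = \varepsilon u$ onto $H_{(-,2;3)}$ gives
\begin{equation*}
  (\varepsilon - \cos\alpha\pi)\, u^{\parity,-} = -i(\sin\alpha\pi)\, \sgnmul u^{\parity,+} .
\end{equation*}
Since $\alpha \in (0,1)$, we have $\varepsilon - \cos\alpha\pi \neq 0$, and this yields
\begin{equation*}
  u^{\parity,-} = i\tan(\tfrac{\alpha\pi}{2})\,\sgnmul u^{\parity,+} \quad (\varepsilon=-1),
  \qquad
  u^{\parity,-} = -i\cot(\tfrac{\alpha\pi}{2})\,\sgnmul u^{\parity,+} \quad (\varepsilon=+1).
\end{equation*}
Applying $\sgnmul$ (with $\sgnmul^2=\ident$) and the half-angle identities, one checks that the projection onto $H_{(+,2;3)}$ of the eigenvalue equation is then automatically satisfied.

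Hence the inverse of $u\mapsto u^{\parity,+}$ is $w \mapsto w + i c\,\sgnmul w$, with $c = \tan(\alpha\pi/2)$ for $\varepsilon=-1$ and $c = -\cot(\alpha\pi/2)$ for $\varepsilon=+1$. This map is bounded from $H^{1/2}_{\varsigma,(+,2;3)}(\Sph^1)$ into $\ker(\rop_\alpha-\varepsilon\ident)$, because $\sgnmul$ is an isometric involution of $H^{1/2}_\varsigma(\Sph^1)$ in the graph norm \ref{sgnmul_norm}. The forward map $u \mapsto u^{\parity,+}$ is bounded because $\parity$ is a Fourier multiplier of modulus one and commutes with $\sgnmul$ up to sign. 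This proves (i).

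\emph{Step 2: evaluating the form.} For $u$ with $\rop_\alpha u = \varepsilon u$ we have $u = \varepsilon\mult_{1-\alpha}\parity u$, and therefore
\begin{equation*}
  \mult_\alpha u = \varepsilon\,\mult_\alpha\mult_{1-\alpha}\parity u = -\varepsilon\,\parity u .
\end{equation*}
Since $\parity$ is $L^2$-unitary and commutes with $\nop_3$, it follows that
\begin{equation*}
  \langle \nop_3\mult_\alpha u, \mult_\alpha u\rangle = \langle \nop_3 u, u\rangle ,
  \qquad\text{so}\qquad
  \iop{\alpha}(u,u) = 2\langle \nop_3 u, u\rangle .
\end{equation*}
Moreover $\nop_3$ preserves the parity classes, so
\begin{equation*}
  \langle \nop_3 u,u\rangle = \langle \nop_3^+u^{\parity,+},u^{\parity,+}\rangle + \langle \nop_3^-u^{\parity,-},u^{\parity,-}\rangle .
\end{equation*}
Substituting $u^{\parity,-} = ic\,\sgnmul u^{\parity,+}$ turns the second term into $c^2\langle\nop_3^-\sgnmul u^{\parity,+},\sgnmul u^{\parity,+}\rangle$. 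Thus $\iop{\alpha}(u,u)=2\qop_{\abs{c}}(u^{\parity,+},u^{\parity,+})$, which gives (ii) and (iii).

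\emph{Step 3: index and nullity.} By polarization, the identities in (ii) and (iii) hold for the sesquilinear forms $\iop{\alpha}(u,v)$ as well. Since the maps in (i) are linear bijections, they carry negative-definite subspaces to negative-definite subspaces and radicals to radicals in both directions. Therefore
\begin{equation*}
  \ind_2^\mp(\iop{\alpha}) = \ind(\qop_{\tan(\alpha\pi/2)}) \ \text{resp.}\ \ind(\qop_{\cot(\alpha\pi/2)}) ,
\end{equation*}
and likewise for the nullity, with $\ker(\rop_\alpha+\ident)$ giving the $\tan$ term and $\ker(\rop_\alpha-\ident)$ the $\cot$ term. Summing via \ref{iop_rop_decomp} gives (iv) and (v).

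The main point needing care is Step 1's functional-analytic bookkeeping: showing that $\sgnmul u^{\parity,+}$ lies in $H^{1/2}_{\varsigma,(-,2;3)}(\Sph^1)$, so that $\nop_3^-$ applies to it, and that $u^{\parity,\pm}$ remain in $H^{1/2}_\varsigma$. Both follow from $\sgnmul\parity=-\parity\sgnmul$ together with the definition of $H^{1/2}_\varsigma$ as the graph-norm domain of $\sgnmul$.
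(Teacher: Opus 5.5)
Your proposal is correct and follows essentially the same route as the paper. The paper likewise expresses $u^{\parity,-}$ as $\tfrac{\pm i\sin\alpha\pi}{1\pm\cos\alpha\pi}\,\sgnmul u^{\parity,+}$, uses $\mult_\alpha u = \pm\parity u$ on $\ker(\rop_\alpha\pm\ident)$ to reduce $\iop{\alpha}(u,u)$ to $2\qop$, and deduces (iv) and (v) from the $\rop_\alpha$ block decomposition; the only cosmetic difference is that you get injectivity directly from the $H_{(-,2;3)}$-projection of the eigenvalue equation, whereas the paper checks it separately via the orthogonality of $u$ and $\sgnmul u$ for $u$ of odd parity.
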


\begin{proof}
To see surjectivity for the $+$ case of
\ref{rop_parity_plus_iso}
suppose
$v_+ \in H^{1/2}_{\varsigma,(+,2;3)}(\Sph^1)$
and define
\begin{equation}
  v_-
  :=
  \frac{i \sin \alpha\pi}{1 + \cos \alpha\pi}
    \sgnmul v_+
  \quad \mbox{and} \quad
  u
  :=
  v_+ + v_-;
\end{equation}
then $u \in H^{1/2}_{\varsigma,(2;3)}(\Sph^1)$,
$u^{\parity,\pm} = v_\pm$, and
\begin{equation}
\begin{aligned}
  \rop_\alpha u
  &=
  \mult_{1-\alpha}(v_+ - v_-)
  \\
  &=
  -(\cos \alpha\pi)v_+
    -i(\sin \alpha\pi)\sgnmul v_+
    +i \frac{\sin \alpha\pi \cos \alpha\pi}
      {1 + \cos \alpha\pi}
        \sgnmul v_+
    - \frac{\sin^2 \alpha\pi}
      {1 + \cos \alpha\pi}
        v_+
  \\
  &=
  -v_+ - i \frac{\sin \alpha\pi}{1+\cos \alpha\pi}\sgnmul v_+
  =
  -v_+ - v_- = -u.
\end{aligned}
\end{equation}

To confirm surjectivity in the $-$ case of
\ref{rop_parity_plus_iso}
we modify the definition of $v_-$ to
\begin{equation}
  v_-
  :=
  \frac{-i \sin \alpha\pi}{1 - \cos \alpha\pi}
    \sgnmul v_+
\end{equation}
and then verify $\rop_\alpha (v_+ + v_-) = v_+ + v_-$.
To check injectivity in both cases
suppose $u \in H^{1/2}_{\varsigma,(-,2;3)}(\Sph^1)$
satisfies $\rop_\alpha u = \pm u$;
then 
$
  \cos (\alpha\pi)u + i\sin(\alpha \pi) \sgnmul u
  =
  \pm u
$,
but $\sgnmul u \in H^{1/2}_{\varsigma,(+,2;3)}(\Sph^1)$
is orthogonal to $u$,
and $\sin \alpha \pi \neq 0$,
so in fact $u=0$.

Next, for any $u \in \ker(\rop_\alpha \pm \ident)$
we have according to the above
\begin{equation}
  \mult_\alpha u
  =
  \pm \parity u
  \quad \mbox{and} \quad
  u^{\parity,-}
  =
  \frac{\pm i \sin \alpha\pi}{1 \pm \cos \alpha\pi}\sgnmul u^{\parity,+},
\end{equation}
and so
\begin{equation}
\begin{aligned}
  \iop{\alpha}(u,u)
  &=
    \langle
      \nop_3^+ u^{\parity,+}, u^{\parity,+}
    \rangle_{L^2(\Sph^1)}
    \\
    &\quad
    +
    \frac{\sin^2 \alpha\pi}{(1 \pm \cos \alpha\pi)^2}
    \langle
      \nop_3^- \sgnmul u^{\parity,+}, \sgnmul u^{\parity,+}
    \rangle_{L^2(\Sph^1)}
    +
    \langle
      \nop_3 \parity u, \parity u 
    \rangle_{L^2(\Sph^1)}
  \\
  &=
  2\qop_{\tfrac{\sin \alpha\pi}{1 \pm \cos \alpha\pi}}(u^{\parity,+},u^{\parity,+}),
\end{aligned}
\end{equation}
completing the proof
of \ref{iop-qop_rop_plus} and \ref{iop-qop_rop_minus},
in view of the half-angle identities for the sine and cosine functions.

Items \ref{iop-qop_ind} and \ref{iop-qop_nul}
now follow from the preceding ones
in conjunction with \ref{iop_rop_decomp}.
\end{proof}

\subsection*{Threshold}

Note that $\qop_s$ is bounded
on
$
  H^{1/2}_{\varsigma,(+,2;3)}(\Sph^1)
  \times
  H^{1/2}_{\varsigma,(+,2;3)}(\Sph^1)
$
(using the norm \ref{sgnmul_norm})
and by \ref{DtN_3pm-momentum},
for all $u \in H^{1/2}_{\varsigma,(+,2;3)}(\Sph^1)$
\begin{equation}
  \qop_s(u,u) + 5(1+s^2)\norm{u}_{L^2(\Sph^1)}^2
  \geq
  \min\{1,s^2\}\norm{u}_{H^{1/2}_\varsigma(\Sph^1)}^2.
\end{equation}
Note also that the inclusion
$H^{1/2}_{\varsigma,(+,2;3)}(\Sph^1) \hookrightarrow L^2(\Sph^1)$
is compact.
We define
\begin{equation}
  \gamma(s)
  :=
  \inf_{0 \neq u \in H^{1/2}_{\varsigma,(+,2;3)}(\Sph^1)}
  \frac{\qop_s(u,u)}{\norm{u}_{L^2(\Sph^1)}^2}
  \in \R
\end{equation}
to be the least eigenvalue of the Hermitian form $\qop_s$
relative to the $L^2(\Sph^1)$ inner product
(where $\lambda \in \R$ is an eigenvalue of $\qop_s$
if there exists a nonzero $u \in H^{1/2}_{\varsigma,(+,2;3)}(\Sph^1)$
such that $\qop_s(v,u) = \langle v, \lambda u \rangle_{L^2(\Sph^1)}$
for all $v \in H^{1/2}_{\varsigma,(+,2;3)}(\Sph^1)$).

\begin{lemma}
\label{single_crossing}
The function $\gamma$ is continuous and strictly increasing on $(0,\infty)$.
\end{lemma}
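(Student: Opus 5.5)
The plan is to read $\gamma$ as an infimum of functions that are affine in $s^2$, and to use the sign of the $\nop_3^-$ term. For $0 \neq u \in H^{1/2}_{\varsigma,(+,2;3)}(\Sph^1)$, normalized by $\norm{u}_{L^2(\Sph^1)}=1$, set
\begin{equation*}
  a(u) := \langle \nop_3^+ u, u\rangle_{L^2(\Sph^1)},
  \qquad
  b(u) := \langle \nop_3^- \sgnmul u, \sgnmul u\rangle_{L^2(\Sph^1)},
\end{equation*}
so that $\qop_s(u,u) = a(u) + s^2 b(u)$.

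\textbf{Step 1: sign of $b$.} By \ref{DtN_3pm-momentum}, $b(u) = 2\pi\sum_{\ell \in 4+6\Z} \sigma_3(\ell)\abs{\widehat{\sgnmul u}(\ell)}^2$. On $4+6\Z$ we have $\sigma_3(\ell) = (\ell^2-4)/\abs{\ell} \geq 0$, with equality only at $\ell=-2$. Hence $b(u) \geq 0$, and $b(u)=0$ forces $\sgnmul u = c\,e^{-2i\theta}$, that is, $u = c\,\varsigma e^{-2i\theta}$.

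\textbf{Step 2: $b>0$ on the space.} If $c \neq 0$, the function $c\,\varsigma e^{-2i\theta}$ has jump discontinuities at the six points of $\pts{3}$. Its Fourier coefficients then decay only like $\abs{\ell}^{-1}$, as one sees from \ref{FT_varsigma} shifted by $-2$. So it is not in $H^{1/2}(\Sph^1)$, contradicting $u \in H^{1/2}_\varsigma(\Sph^1)$. Therefore $b(u) > 0$ for every nonzero admissible $u$.

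\textbf{Step 3: minimizers exist.} Fix a compact interval $[s_0,s_1] \subset (0,\infty)$. The coercivity estimate stated just before the lemma gives $\qop_s(u,u) + 5(1+s_1^2)\norm{u}_{L^2}^2 \geq \min\{1,s_0^2\}\norm{u}^2_{H^{1/2}_\varsigma}$, uniformly for $s$ in the interval. Combined with the compact inclusion $H^{1/2}_{\varsigma,(+,2;3)}(\Sph^1) \hookrightarrow L^2(\Sph^1)$, the direct method applies. A minimizing sequence is bounded in $H^{1/2}_\varsigma$, has a weakly convergent subsequence that converges strongly in $L^2$, and $\qop_s$ is weakly lower semicontinuous, being the coercive form above minus a compact perturbation. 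So each $\gamma(s)$ is attained by some $u_s$ with $\norm{u_s}_{L^2}=1$. Moreover $\norm{u_s}_{H^{1/2}_\varsigma}$ is bounded uniformly for $s \in [s_0,s_1]$, because $\gamma(s) \leq \gamma(s_1)$ once monotonicity is known, or more simply because $\gamma(s) \leq \qop_s(u_*,u_*)$ for a fixed test $u_*$.

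\textbf{Step 4: strict monotonicity.} For $s < s'$, take the minimizer $u_{s'}$. Then $\gamma(s) \leq a(u_{s'}) + s^2 b(u_{s'}) < a(u_{s'}) + s'^2 b(u_{s'}) = \gamma(s')$, where the strict inequality uses Step 2.

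\textbf{Step 5: continuity.} Upper semicontinuity holds because $\gamma$ is an infimum of functions continuous in $s$. For the lower bound, note that $b(u_s) \leq C\norm{u_s}^2_{H^{1/2}_\varsigma}$ is bounded uniformly on $[s_0,s_1]$ by Step 3. Then
\begin{equation*}
  \gamma(s') \leq \qop_{s'}(u_s,u_s) = \gamma(s) + (s'^2-s^2)\,b(u_s),
\end{equation*}
and swapping the roles of $s$ and $s'$ gives the two-sided bound $\abs{\gamma(s)-\gamma(s')} \leq C\abs{s^2-s'^2}$. Alternatively, $\gamma$ is a finite concave function of $s^2$, being an infimum of affine functions, and hence continuous on the open interval.

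The only genuinely delicate point is Step 2: the vanishing of $\sigma_3$ at $\ell=-2$ must not be able to kill $b$. This is where membership of $\sgnmul u$ in $H^{1/2}$, built into $H^{1/2}_\varsigma(\Sph^1)$, is essential.
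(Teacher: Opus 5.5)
Your proof is correct and follows essentially the same route as the paper. You write $\qop_s(u,u)$ as affine in $s^2$, so $\gamma$ is concave in $s^2$ and hence continuous, and you get strict monotonicity by testing the minimizer at the larger parameter, using that the $\nop_3^-$ term is strictly positive because $\varsigma e^{-2i\theta} \notin H^{1/2}(\Sph^1)$. Your Step 3 (existence of minimizers by the direct method) and the Lipschitz alternative in Step 5 simply make explicit what the paper takes for granted from the coercivity estimate and the compact embedding.
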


\begin{proof}
For each $u \neq 0$ the map
$s^2 \mapsto \qop_s(u,u)/\norm{u}_{L^2(\Sph^1)}^2$
is affine, so concave;
infimizing, we conclude that
$\gamma(\sqrt{\tau})$
is a concave function of $\tau$,
and in particular $\gamma$ itself is continuous.
Now let $s_2 > s_1 > 0$
and let $u$ be a first eigenfunction of $\qop_{s_2}$
with $\norm{u}_{L^2(\Sph^1)} = 1$,
so that $\gamma(s_2) = \qop_{s_2}(u,u)$.
By \ref{DtN_3pm-momentum}
we have
$
  \langle
    \nop_3^- \sgnmul u,
    \sgnmul u
  \rangle_{L^2(\Sph^1)}
  \geq 0
$
with equality only if
$\varsigma u \in \C e^{-2i\theta}$,
but 
(as can be seen with the aid of \ref{FT_varsigma})
$\varsigma(\theta)e^{-2i\theta} \not \in H^{1/2}(\Sph^1)$.
Consequently,
\begin{equation*}
  \gamma(s_1)
  \leq
  \qop_{s_1}(u,u)
  =
  \qop_{s_2}(u,u)
    -(s_2^2 - s_1^2)
      \langle
        \nop_3^- \sgnmul u,
        \sgnmul u
      \rangle_{L^2(\Sph^1)}
  < 
  \gamma(s_2).
\end{equation*}
\end{proof}

\begin{lemma}
\label{threshold_lower_bound}
We have $\gamma(1) < 0$.
\end{lemma}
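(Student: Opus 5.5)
The plan is to recognize $\qop_1$ as the Zaremba form of Section \ref{Zaremba} in disguise, for $k=3$ and $j=2$, and then import the negativity already established there. No new Fourier estimate should be needed.

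\emph{Step 1: $\qop_1$ is the Zaremba form.} Let $u \in H^{1/2}_{\varsigma,(+,2;3)}(\Sph^1)$ and set $w := u + \sgnmul u$. Then $w = 2u$ on $\arcs{3}{1}$ and $w = 0$ on $\arcs{3}{0}$, so $\supp w \subseteq \closure(\arcs{3}{1})$. Moreover $\sgnmul u \in H^{1/2}_{\varsigma,(-,2;3)}(\Sph^1)$, so $w \in H^{1/2}_{\DN,(2;3)}(\Sph^1)$ and $w^{\parity,+} = u$, $w^{\parity,-} = \sgnmul u$. The operator $\nop_3$ commutes with $\parity$, and the $\pm$ Fourier supports $1+6\Z$ and $4+6\Z$ are disjoint. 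Hence the cross terms vanish and
\begin{equation*}
  \langle \nop_3 w, w\rangle_{L^2(\Sph^1)}
  =
  \langle \nop_3^+ u, u\rangle_{L^2(\Sph^1)}
  +
  \langle \nop_3^- \sgnmul u, \sgnmul u\rangle_{L^2(\Sph^1)}
  =
  \qop_1(u,u).
\end{equation*}
Conversely, take $w \in H^{1/2}_{\DN,(2;3)}(\Sph^1)$ and put $u := w^{\parity,+}$. Since $\varsigma w = w$, the relation $\parity\sgnmul = -\sgnmul\parity$ gives $\sgnmul w^{\parity,+} = (\sgnmul w)^{\parity,-} = w^{\parity,-} \in H^{1/2}(\Sph^1)$. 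Therefore $u \in H^{1/2}_{\varsigma,(+,2;3)}(\Sph^1)$ and $w = u + \sgnmul u$. If $w \neq 0$ then $u \neq 0$, since otherwise $w^{\parity,-} = \sgnmul u = 0$ as well.

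\emph{Step 2: import the Zaremba index.} By \ref{Zclasses} with $k=3$, $j=2$ (this case was settled in full by \ref{lower-bound} together with \ref{z-upper}), we have $\ind_{\DN,2}(\ks_3^{(0,\pi)}) = 1$. By \ref{z-reduction} this gives $\ind_{\DN,2}(\nop_3) = 1$. So there is a nonzero $w \in H^{1/2}_{\DN,(2;3)}(\Sph^1)$ with $\langle \nop_3 w, w\rangle_{L^2(\Sph^1)} < 0$. Concretely, one can take the test field of \ref{test-field-def} transported back through the isomorphism of \ref{single-arc_reduction}. For $k=3$ the chain in the proof of \ref{lower-bound} ends in exactly $0$, but the middle step is a strict inequality, so the form is strictly negative.

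\emph{Step 3: conclude.} Setting $u := w^{\parity,+} \neq 0$, Step 1 gives $\qop_1(u,u) < 0$, and therefore $\gamma(1) \leq \qop_1(u,u)/\norm{u}^2_{L^2(\Sph^1)} < 0$.

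The only delicate point is the bookkeeping in Step 1: checking that $\sgnmul u$ lies in the $(-)$ class, that $u$ belongs to the domain $H^{1/2}_\varsigma$ and not merely to $H^{1/2}$, and that the orthogonality of the two parity classes under $\nop_3$ kills the cross terms. All of this follows from the relations recorded before \ref{rop_props}.
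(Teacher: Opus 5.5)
Your proof is correct, but it takes a different route from the paper's. The paper argues globally. \ref{m=2} with $k=3$, together with \ref{k=3_decomp} and \ref{alpha=0}, gives $\ind_{1/2}(\ks_3)=6$. Then \ref{j_sym} and \ref{k=3_bracket} force $\ind_{1/2,2}(\ks_3)=2$. Finally, \ref{iop_ind_nul_equiv} with \ref{iop-qop_equiv} at $\alpha=1/2$, where $\tan(\alpha\pi/2)=\cot(\alpha\pi/2)=1$, turns this into $2\ind(\qop_1)=2$.

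You instead stay on the circle. You observe that $u\mapsto u+\sgnmul u=(1+\varsigma)u$ is a linear bijection from $H^{1/2}_{\varsigma,(+,2;3)}(\Sph^1)$ onto $H^{1/2}_{\DN,(2;3)}(\Sph^1)$. This map carries $\qop_1$ exactly to the Zaremba form $\langle\nop_3\,\cdot,\cdot\rangle_{L^2(\Sph^1)}$. Your bookkeeping for this is sound: $\sgnmul$ swaps the parity classes, the cross terms vanish, and $\sgnmul w^{\parity,+}=w^{\parity,-}$ gives the inverse.

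Your route needs much less input. It uses only the explicit negative direction from \ref{lower-bound}, transported by \ref{single-arc_reduction}; the detour through \ref{Zclasses} and \ref{z-reduction} in your Step 2 is unnecessary. It avoids \ref{z-upper}, \ref{edges}, the slit-sphere equivalences \ref{slit-form-equiv} and \ref{iop_ind_nul_equiv}, and the $\rop_\alpha$ splitting. It also shows directly that $\ind(\qop_1)=\ind_{\DN,2}(\nop_3)$ and $\nul(\qop_1)=\nul_{\DN,2}(\nop_3)$.

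The paper's route costs only a few lines given what is already proved. It also exhibits the consistency between Sections \ref{Zaremba} and \ref{quasiperiodic}: the antiperiodic class $\alpha=1/2$ on $\ks_{3,2}$ is exactly what the two Zaremba blocks in the decomposition of $\ind(\ks_{3,2})$ compute.
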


\begin{proof}
From Theorem \ref{m=2}
and \ref{k=3_decomp} with $m = 2$
\begin{equation*}
  9
  =
  \ind(\ks_{3,2})
  =
  \ind_0(\ks_3) + \ind_{\tfrac{1}{2}}(\ks_3)
  =
  3
    +\ind_{\tfrac{1}{2}}(\ks_3),
\end{equation*}
whence $\ind_{1/2}(\ks_3)=6$.
Using \ref{j_decomp_given_alpha},
\ref{j_sym},
and \ref{k=3_bracket},
this implies
$\ind_{1/2,~2}(\ks_3) = 2$.
Finally,
applying
\ref{iop_ind_nul_equiv}
and \ref{iop-qop_equiv},
we obtain
$\ind(\qop_1) = 1$.
In particular
the least eigenvalue $\gamma(1)$ of $\qop_1$ is strictly negative.
\end{proof}

\begin{lemma}
\label{threshold_upper_bound}
We have $\gamma(\sqrt{3}) > 0$.
\end{lemma}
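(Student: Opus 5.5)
The plan is to show directly that $\qop_{\sqrt3}(u,u) > 0$ for every nonzero $u \in H^{1/2}_{\varsigma,(+,2;3)}(\Sph^1)$. Since the inclusion into $L^2(\Sph^1)$ is compact and the coercivity estimate above holds, the infimum $\gamma(\sqrt3)$ is attained by a first eigenfunction, so positivity on every nonzero vector gives $\gamma(\sqrt3)>0$. The argument is a dual Cauchy--Schwarz estimate in the spirit of \ref{Q-lower}. Write $w := \sgnmul u$, so $\supp \widehat u \subseteq 1+6\Z$ and, by \ref{FT_varsigma}, $\supp \widehat w \subseteq 4+6\Z$. By \ref{DtN_3pm-momentum},
\begin{equation*}
  \frac{\qop_s(u,u)}{2\pi}
  =
  -3\abs{\widehat u(1)}^2
  + \sum_{\ell \in 1+6\Z,\, \ell \neq 1} \sigma_3(\ell)\abs{\widehat u(\ell)}^2
  + s^2 \sum_{m \in 4+6\Z,\, m \neq -2} \sigma_3(m)\abs{\widehat w(m)}^2 .
\end{equation*}
Here every displayed weight $\sigma_3$ is strictly positive. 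The only negative mode is $\ell=1$, and the mode $m=-2$ has weight $\sigma_3(-2)=0$. Call the two positive sums (with $s^2=3$) $P(u)$. It then suffices to prove $3\abs{\widehat u(1)}^2 \leq c\,P(u)$ for some $c<1$. Indeed, $P(u)=0$ forces $\widehat u(1)=0$ and hence $u=0$.

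To bound $\widehat u(1)$, I will use the two relations $w=\varsigma u$ and $u=\varsigma w$ (valid since $\varsigma^2=1$), read off as $L^2$ inner products, so no convergence issues arise:
\begin{equation*}
  \widehat w(-2) = \sum_{\ell} \widehat\varsigma(-2-\ell)\,\widehat u(\ell),
  \qquad
  \widehat u(1) = \sum_{m} \widehat\varsigma(1-m)\,\widehat w(m).
\end{equation*}
The coefficient of $\widehat u(1)$ in the first relation is $\widehat\varsigma(-3)=2/\pi$, and the coefficient of $\widehat w(-2)$ in the second is $\widehat\varsigma(3)=2/\pi$. Solving the second relation for $\widehat w(-2)$ and substituting into the first eliminates the weightless mode $m=-2$. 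This yields
\begin{equation*}
  \Bigl(\tfrac{\pi^2}{4}-1\Bigr)\widehat u(1)
  =
  \tfrac{\pi^2}{4}\sum_{m \neq -2}\widehat\varsigma(1-m)\widehat w(m)
  + \tfrac{\pi}{2}\sum_{\ell \neq 1}\widehat\varsigma(-2-\ell)\widehat u(\ell),
\end{equation*}
whose coefficient $\tfrac{\pi^2}{4}-1 \approx 1.467$ is nonzero. Cauchy--Schwarz with the weights $3\sigma_3(m)$ and $\sigma_3(\ell)$ then gives $\abs{\widehat u(1)}^2 \leq C\,P(u)$, where
\begin{equation*}
  C
  =
  \Bigl(\tfrac{\pi^2}{4}-1\Bigr)^{-2}
  \Bigl[
    \tfrac{\pi^4}{16}\sum_{m\neq-2}\frac{\abs{\widehat\varsigma(1-m)}^2}{3\sigma_3(m)}
    + \tfrac{\pi^2}{4}\sum_{\ell\neq1}\frac{\abs{\widehat\varsigma(-2-\ell)}^2}{\sigma_3(\ell)}
  \Bigr].
\end{equation*}

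The main obstacle is the explicit numerical verification that $3C<1$, with a rigorous tail bound. From \ref{FT_varsigma}, $\abs{\widehat\varsigma(n)}^2 = 36/(\pi^2 n^2)$ on $3+6\Z$, and $\sigma_3(\ell)\geq \tfrac{4}{5}\abs{\ell}$ on the relevant sets. I will compute the leading terms exactly. The term $m=4$ contributes $\pi^2/36\approx 0.274$ to the first sum, and $\ell=-5$ contributes $5/21\approx 0.238$ to the second. The next terms ($m=-8$, $\ell=7$, and so on) are of size $10^{-2}$. The tails are of order $\sum \abs{n}^{-3}$ and I will bound them by comparison with an integral, as in the proof of \ref{Q-lower}. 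The bracket should come to roughly $0.55$, so $C\approx 0.26$ and $3C\approx 0.77<1$, which leaves comfortable margin for crude tail bounds. If that margin proved too thin, the fallback is to replace the fixed elimination above by an optimized linear combination of the two relations. That amounts to choosing the best dual test functional, and it can only decrease $C$.
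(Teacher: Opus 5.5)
Your argument is correct, but it takes a different route from the paper's.

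\textbf{The paper's route.} The paper never singles out $\widehat u(1)$. It bounds $\langle \nop_3^-\sgnmul u,\sgnmul u\rangle_{L^2(\Sph^1)}$ below using only the smallest nonzero weight $\sigma_3(4)=3$, together with Parseval's identity $\norm{\sgnmul u}_{L^2(\Sph^1)}=\norm{u}_{L^2(\Sph^1)}$. This turns the entire $\sgnmul u$ side into a uniform shift of the $u$-symbol by $9$. Since $\sigma_3+9>0$ on $1+6\Z$, the only remaining obstruction is the single functional $u\mapsto\widehat{\sgnmul u}(-2)$. That functional is controlled by one Cauchy--Schwarz inequality against $\nop_3^+ + 9$. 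The resulting criterion $S<1$ is then rewritten, via Parseval for $\varsigma e^{-2i\theta}$, as positivity of an explicit series whose terms are all positive except the one at $\ell=1$. So an eight-term partial sum in exact rational arithmetic suffices, with no tail estimate, even though the true margin is thin ($1-S\approx 0.04$).

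\textbf{Your route.} You keep the exact $\sgnmul u$-weights and treat the negative mode $\widehat u(1)$ as the obstruction. You use $\varsigma^2=1$ a second time, through $u=\varsigma\,\sgnmul u$, to eliminate $\widehat{\sgnmul u}(-2)$ algebraically. This discards less information and gives a much wider margin. The cost is that you need genuine tail bounds for two series with $\pi$-dependent coefficients.

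\textbf{Corrections to your numerics.}
First, the bound $\sigma_3(\ell)\ge\tfrac45\abs{\ell}$ fails at $m=4$, where $\sigma_3(4)=3<\tfrac{16}{5}$. This is harmless because you compute that term exactly.
Second, the non-leading terms are larger than you estimate. The bracket is about $0.58$, so $C\approx 0.27$ and $3C\approx 0.81$. That is still comfortably below $1$. Even applying $\sigma_3\ge\tfrac45\abs{\cdot}$ to every term except $m=4$ and $\ell=-5$ keeps $3C<0.83$.
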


\begin{proof}
Let $u \in H^{1/2}_{\varsigma,(+,2;3)}(\Sph^1)$.
Referring to \ref{DtN_3pm-momentum},
we have
\begin{equation}
  \langle \nop_3^- \sgnmul u, \sgnmul u \rangle_{L^2(\Sph^1)}
  \geq
  \sigma_3(4)
    \big(
      \norm{u}_{L^2(\Sph^1)}^2
      -2\pi\abs{\widehat{\sgnmul u}(-2)}^2
    \big),
\end{equation}
and so
\begin{equation}
\label{qop_upper_prelim}
\begin{aligned}
  \qop_{\sqrt{3}}(u,u)
  &=
  \langle \nop_3^+ u, u \rangle_{L^2(\Sph^1)}
    +3\langle \nop_3^- \sgnmul u, \sgnmul u\rangle_{L^2(\Sph^1)}
  \\
  &\geq
  \langle (\nop_3^+ + 9)u, u \rangle_{L^2(\Sph^1)}
    -18\pi\abs{\widehat{\sgnmul u}(-2)}^2.
\end{aligned}
\end{equation}

On the other hand,
\begin{equation}
\label{bound_in_S}
\begin{aligned}
  18\pi\abs{\widehat{\sgnmul u}(-2)}^2
  &=
  \frac{9}{2\pi}\abs{\langle \varsigma e^{-2i\cdot}, u \rangle_{L^2(\Sph^1)}}^2
  =
  18\pi\abs*{
    \sum_{\ell \in 1 + 6\Z}
      \widehat{\varsigma}(\ell+2) \widehat{u}(\ell)
  }^2
  \\
  &=
  18\pi\abs*{
    \sum_{\ell \in 1 + 6\Z}
      \frac{\widehat{\varsigma}(\ell+2)}{\sqrt{\sigma_3(\ell)+9}}
      \,
      \sqrt{\sigma_3(\ell)+9} \, \widehat{u}(\ell)
  }^2
  \\
  &\leq
  S
  \langle
    u, ~
    (\nop_3^+ + 9)u
  \rangle_{L^2(\Sph^1)},
\end{aligned}
\end{equation}
where
\begin{equation}
  S
  :=
  \frac{9}{2\pi}
  \langle
    \sgnmul e^{-2i\cdot}, ~
    (\nop_3^+ + 9)^{-1}\sgnmul e^{-2i\cdot}
  \rangle_{L^2(\Sph^1)},
\end{equation}
having used the Cauchy--Schwarz inequality
and the fact,
evident from \ref{DtN_3pm-momentum},
that $\sigma_3(\ell)+9 > 0$
for all $\ell \in 1+6\Z$.

Now assume that $u$ is a first eigenfunction of $\qop_{\sqrt{3}}$
with $\norm{u}_{L^2(\Sph^1)}^2 = 1$.
Then, applying \ref{bound_in_S} to \ref{qop_upper_prelim},
we obtain
\begin{equation}
  \gamma(\sqrt{3})
  =
  \qop_{\sqrt{3}}(u,u) 
  \geq
  (1-S)\langle (\nop_3^+ + 9)u, ~ u \rangle_{L^2(\Sph^1)},
\end{equation}
and so to complete the proof it suffices to show $S < 1$
(using again the fact that $\sigma_3 + 9$
is strictly positive on $1+6\Z$).

Using \ref{FT_varsigma} and again \ref{DtN_3pm-momentum},
we compute
\begin{equation*}
  \frac{\pi^2}{36}S
  =
  \frac{\pi^2}{4}\sum_{\ell \in 1 + 6\Z}
    \frac{\abs{\widehat{\varsigma}(\ell+2)}^2}{\sigma_3(\ell) + 9}
  =
  \sum_{\ell \in 1 + 6\Z}
    \frac{9\abs{\ell}}{(\ell+2)^2(\ell^2+9\abs{\ell}-4)}.
\end{equation*}
On the other hand,
since $\varsigma(\theta) \, e^{-2i\theta}$
has $L^2(\Sph^1)$ norm $\sqrt{2\pi}$,
using Parseval's identity and again \ref{FT_varsigma},
we have
\begin{equation*}
  \frac{\pi^2}{36}
  =
  \sum_{\ell \in 1 + 6\Z}
    \frac{1}{(\ell+2)^2},
\end{equation*}
and therefore
\begin{equation*}
\begin{aligned}
  \frac{\pi^2}{36}(1-S)
  &=
  \sum_{\ell \in 1 + 6\Z}
    \frac{\ell^2-4}{(\ell+2)^2(\ell^2+9\abs{\ell}-4)}
  =
  \sum_{\ell \in 1 + 6\Z}
    \frac{\ell-2}{(\ell+2)[\abs{\ell}(\abs{\ell}+9)-4]}
  \\
  &=
  \sum_{n \in \Z}
    \frac{6n-1}{(6n+3)[\abs{6n+1}(\abs{6n+1}+9)-4]}
  \\
  &>
  \sum_{n=-4}^3 \frac{6n-1}{(6n+3)[\abs{6n+1}(\abs{6n+1} + 9)-4]}
  \\
  &=
  \frac{25}{21[23(32)-4]}
    +\frac{19}{15[17(26)-4]}
    +\frac{13}{9[11(20)-4]}
    \\
    &\quad
    +\frac{7}{3[5(14)-4]}
    -\frac{1}{3[1(10)-4]}
    +\frac{5}{9[7(16)-4]}
    \\
    &\quad
    +\frac{11}{15[13(22)-4]}
    +\frac{17}{21[19(28)-4]}
  \\
  &=
  \frac{25}{15372}
    +\frac{19}{6570}
    +\frac{13}{1944}
    +\frac{7}{198}
    -\frac{1}{18}
    +\frac{5}{972}
    +\frac{11}{4230}
    +\frac{17}{11088}
  \\
  &=
  \frac{88090307}{313283512080}
  >
  0,
\end{aligned}
\end{equation*}
where we have used the fact that on $1+6\Z$
the function
$\ell^2+9\abs{\ell}-4$ is globally strictly positive,
while the function
$(\ell-2)/(\ell+2)$ is strictly positive
except at $\ell=1$.
\end{proof}

It follows from \ref{single_crossing},
\ref{threshold_lower_bound},
and \ref{threshold_upper_bound}
that there exists a unique
$s^* \in (0,\infty)$ such that
$\gamma(s^*) = 0$,
and moreover $1 < s^* < \sqrt{3}$.
We define in turn
\begin{equation}
  \alpha^*
  :=
  \frac{2}{\pi} \arctan \frac{1}{s^*}
  \in
  \Big(\frac{1}{3} , ~ \frac{1}{2}\Big).
\end{equation}

\begin{proposition}
\label{ind_null_rel_threshold}
For all $\alpha \in (0,1) \cap \Q$
\begin{equation*}
\begin{aligned}
  \ind_\alpha(\ks_3)
  &=
  \begin{cases}
    4
      &\mbox{if }
      \alpha \in (0,\alpha^*] \cup [1-\alpha^*, 1)
    \\
    6 &\mbox{if }
      \alpha \in (\alpha^*, 1-\alpha^*),
  \end{cases}
  \\
  \nul_\alpha(\ks_3)
  &=
  \begin{cases}
    0 &\mbox{if }
      \alpha \not \in \{\alpha^*, 1-\alpha^*\}
    \\
    2 &\mbox{if } 
      \alpha \in \{\alpha^*, 1-\alpha^*\}.
  \end{cases}
\end{aligned}
\end{equation*}
\end{proposition}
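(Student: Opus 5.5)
We assemble the count from the pieces already in place. By \ref{j_decomp_given_alpha} and \ref{j_sym},
\[
\ind_\alpha(\ks_3) = \ind_{\alpha,0}(\ks_3) + 2\ind_{\alpha,2}(\ks_3), \qquad \nul_\alpha(\ks_3) = \nul_{\alpha,0}(\ks_3) + 2\nul_{\alpha,2}(\ks_3).
\]
By \ref{k=3_bracket}, $\ind_{\alpha,0}(\ks_3)=2$ and $\nul_{\alpha,0}(\ks_3)=0$. So it suffices to show that $\ind_{\alpha,2}(\ks_3)$ equals $1$ or $2$, and $\nul_{\alpha,2}(\ks_3)$ equals $0$ or $1$, according to the stated cases. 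By \ref{iop_ind_nul_equiv} and \ref{iop-qop_equiv},
\[
\ind_{\alpha,2}(\ks_3) = \ind(\qop_{\tan(\alpha\pi/2)}) + \ind(\qop_{\cot(\alpha\pi/2)}),
\]
and likewise for the nullity. This reduces everything to the single-parameter family $\qop_s$.

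Next we read off $\ind(\qop_s)$ and $\nul(\qop_s)$ from the sign of $\gamma(s)$. By \ref{qop_ind_plus_nul_above}, $\ind(\qop_s)+\nul(\qop_s)\le 1$. By the coercivity estimate and the compact inclusion, $\qop_s$ has discrete spectrum relative to $L^2$ with least eigenvalue $\gamma(s)$, attained. Hence:
\begin{itemize}
\item if $\gamma(s)<0$, then $\ind(\qop_s)=1$ and $\nul(\qop_s)=0$;
\item if $\gamma(s)=0$, then $\ind(\qop_s)=0$ and $\nul(\qop_s)=1$;
\item if $\gamma(s)>0$, both vanish.
\end{itemize}
In the middle case, a null vector of the form is exactly an eigenfunction with eigenvalue $0$. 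For the index, a negative $L^2$-eigenvector spans a negative line, and no negative line can exist if $\gamma(s)\ge0$. By \ref{single_crossing} and the definition of $s^*$, $\gamma(s)<0$ for $s<s^*$, $\gamma(s^*)=0$, and $\gamma(s)>0$ for $s>s^*$.

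Now translate to $\alpha\in(0,1)$. Set $s_1=\tan(\alpha\pi/2)$ and $s_2=\cot(\alpha\pi/2)=1/s_1$. As $\alpha$ increases, $s_1$ increases from $0$ to $\infty$ and $s_2$ decreases. The definition $\alpha^*=\tfrac2\pi\arctan(1/s^*)$ gives $\cot(\alpha^*\pi/2)=s^*$, hence $\tan((1-\alpha^*)\pi/2)=s^*$. Since $s^*>1$, we have $\alpha^*<1/2<1-\alpha^*$. This yields:
\begin{itemize}
\item $s_2<s^*$ iff $\alpha>\alpha^*$;
\item $s_1<s^*$ iff $\alpha<1-\alpha^*$.
\end{itemize}
Consequently:
\begin{itemize}
\item On $(0,\alpha^*)$, only $s_1<s^*$, so $\ind_{\alpha,2}=1$ and $\nul_{\alpha,2}=0$.
\item At $\alpha^*$, $s_1<s^*=s_2$, so $\ind_{\alpha,2}=1$ and $\nul_{\alpha,2}=1$.
\item On $(\alpha^*,1-\alpha^*)$, both $s_1,s_2<s^*$, so $\ind_{\alpha,2}=2$ and $\nul_{\alpha,2}=0$.
\item The cases $\alpha=1-\alpha^*$ and $\alpha\in(1-\alpha^*,1)$ are symmetric to the first two.
\end{itemize}
Doubling and adding $2$ gives the table. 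Note that $\alpha^*$ need not be rational, and the statement restricted to rational $\alpha$ is still consistent.

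The main point needing care is the identification of index and nullity with the sign of $\gamma(s)$. In particular, nullity of the form must coincide with a zero eigenvalue, which requires the minimizer to exist in $H^{1/2}_{\varsigma,(+,2;3)}$. That follows from the stated Gårding-type bound together with compactness, by the standard argument for lower-semibounded closed forms. Everything else is bookkeeping with the earlier propositions.
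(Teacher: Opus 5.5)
Your proposal is correct and follows the paper's proof essentially step for step: reduce to class $j=2$ via \ref{j_decomp_given_alpha}, \ref{j_sym}, and \ref{k=3_bracket}, pass to $\qop_{\tan(\alpha\pi/2)}$ and $\qop_{\cot(\alpha\pi/2)}$ via \ref{iop_ind_nul_equiv} and \ref{iop-qop_equiv}, then read off the index and nullity from \ref{qop_ind_plus_nul_above}, the strict monotonicity of $\gamma$ in \ref{single_crossing}, and $s^*>1$. Your explicit justification that the infimum $\gamma(s)$ is attained, which is what gives $\nul(\qop_{s^*})=1$, only spells out what the paper uses when it calls $\gamma(s)$ the least eigenvalue.
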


\begin{proof}
Let $\alpha \in (0,1)$ be rational.
By \ref{j_decomp_given_alpha}, \ref{j_sym}
and \ref{k=3_bracket}
we must show
\begin{equation}
\begin{aligned}
  \ind_{\alpha,2}(\ks_3)
  &=
  \begin{cases}
    1
      &\mbox{if }
      \alpha \in (0,\alpha^*] \cup [1-\alpha^*, 1)
    \\
    2 &\mbox{if }
      \alpha \in (\alpha^*, 1-\alpha^*),
  \end{cases}
  \\
  \nul_{\alpha,2}(\ks_3)
  &=
  \begin{cases}
    0 &\mbox{if }
      \alpha \not \in \{\alpha^*, 1-\alpha^*\}
    \\
    1 &\mbox{if }
      \alpha \in \{\alpha^*, 1-\alpha^*\}.
  \end{cases}
\end{aligned}
\end{equation}

By \ref{iop_ind_nul_equiv}
and \ref{iop-qop_equiv}
we must equivalently show
\begin{equation}
\begin{aligned}
  \ind(\qop_{\tan (\alpha\pi/2)})
    +\ind(\qop_{\cot (\alpha\pi/2)})
  &=
  \begin{cases}
    1
      &\mbox{if }
      \alpha \in (0,\alpha^*] \cup [1-\alpha^*, 1)
    \\
    2 &\mbox{if }
      \alpha \in (\alpha^*, 1-\alpha^*),
  \end{cases}
  \\
  \nul(\qop_{\tan (\alpha\pi/2)})
    +\nul(\qop_{\cot (\alpha\pi/2)})
  &=
  \begin{cases}
    0 &\mbox{if }
      \alpha \not \in \{\alpha^*, 1-\alpha^*\}
    \\
    1 &\mbox{if }
      \alpha \in \{\alpha^*, 1-\alpha^*\}.
  \end{cases}
\end{aligned}
\end{equation}

By \ref{qop_ind_plus_nul_above}
we have
$\ind(\qop_s) + \nul(\qop_s) \leq 1$
for all $s > 0$.
Since $\gamma(s)$ is the least eigenvalue of $\qop_s$
and since $\gamma(s)$ is strictly increasing,
we conclude that
\begin{equation}
  \nul(\qop_s)
  =
  \begin{cases}
    1 &\mbox{if } s = s^*
    \\
    0 &\mbox{otherwise},
  \end{cases}
  \qquad
  \ind(\qop_s)
  =
  \begin{cases}
    1 &\mbox{if } s < s^*
    \\
    0 &\mbox{otherwise}.
  \end{cases}
\end{equation}
Since $s^* > 1$,
we have
$s \geq s^* \Rightarrow s^{-1} < s^*$,
and so it follows that
\begin{equation}
\begin{aligned}
  \ind(\qop_s) + \ind(\qop_{s^{-1}})
  &=
  \begin{cases}
    1 &\mbox{if} \quad s \geq s^* \mbox{ or } 1/s \geq s^*
    \\
    2 &\mbox{if} \quad 1/s^* < s < s^*,
  \end{cases}
  \\
  \nul(\qop_s) + \nul(\qop_{s^{-1}})
  &=
  \begin{cases}
    0 &\mbox{if} \quad s \not\in \{s^*, 1/s^*\} 
    \\
    1 &\mbox{if} \quad s \in \{s^*, 1/s^*\}.
  \end{cases}
\end{aligned}
\end{equation}

Given that $\tan (\alpha\pi/2) = 1/(\cot (\alpha\pi/2))$
is strictly increasing in $\alpha$
and
\begin{equation}
  \tan \frac{(1-\alpha^*)\pi}{2}
  =
  \cot \frac{\alpha^*\pi}{2}
  =
  s^*,
\end{equation}
this completes the proof.
\end{proof}

\subsection*{Conclusion}

Let $m \geq 1$ be an integer.
By \ref{k=3_decomp}, \ref{alpha=0},
and \ref{ind_null_rel_threshold}
\begin{equation}
\begin{aligned}
  \nul(\ks_{3,m})
  &=
  3 + 2\card[(m^{-1}\Z) \cap \{\alpha^*, ~1-\alpha^*\}]
  \\
  &=
  3
    +
    \begin{cases}
      4 &\mbox{if } m\alpha^* \in \Z
      \\
      0 &\mbox{if } m\alpha^* \not\in \Z,
    \end{cases}
  \\
  \ind(\ks_{3,m})
  &=
  3 + \sum_{n=1}^{m-1} \ind_{n/m}(\ks_3)
  \\
  &=
  3 + 4(m-1)
  \\
    &\quad
    +2 \card[(m^{-1}\Z) \cap (\alpha^*, ~ 1-\alpha^*)]
  \\
  &=
  4m - 1
    +2[
      (m-\lfloor m\alpha^* \rfloor)
      -\lfloor m\alpha^* \rfloor
      -1
    ]
  \\
  &=
  6m - 3 - 4 \lfloor m\alpha^* \rfloor,
\end{aligned}
\end{equation}
completing the proof of \ref{k=3}.

\end{document}